\newtheorem{theorem}{Theorem}
\newtheorem{lemma}[theorem]{Lemma}
\newtheorem{proposition}[theorem]{Proposition}
\newtheorem{definition}[theorem]{Definition}
\newtheorem{remark}[theorem]{Remark}
\newtheorem{corollary}[theorem]{Corollary}
\colorlet{MyBlue}{DodgerBlue!60!Black}
\colorlet{MyGreen}{DarkGreen!85!Black}
\numberwithin{equation}{section}  
\crefname{app}{Appendix}{Appendices}
\newcommand{\w}{\mathbf{w}}
\newcommand{\si}{\sigma} 
\newcommand{\ent}{{\rm ENT} }
\newcommand{\tc}{\, |\, }
\newcommand{\ind}{\mathbf{1}}
\newcommand{\p}{\mathfrak{p}}
\newcommand{\tv}{\texttt{TV}}
\newcommand{\bd}{\mathbf d}
\newcommand{\muin}{\mu_{\rm in\,\!}}
\newcommand{\tent}{T_\ent}
\newcommand{\bx}{\mathbf x}
\newcommand{\by}{\mathbf y}
\newcommand{\bo}{\mathbf o}
\let\a=\alpha    \let\d=\delta  \let\e=\varepsilon
 \let\g=\gamma     \let\k=\kappa  \let\l=\lambda
\let\o=\omega    
\let\r=\rho  \let\t=\tau  
\let\D=\Delta   \let\G=\Gamma
\newcommand{\cA}{\ensuremath{\mathcal A}} 
\newcommand{\cB}{\ensuremath{\mathcal B}} 
\newcommand{\cC}{\ensuremath{\mathcal C}} 
\newcommand{\cD}{\ensuremath{\mathcal D}} 
\newcommand{\cE}{\ensuremath{\mathcal E}} 
\newcommand{\cF}{\ensuremath{\mathcal F}} 
\newcommand{\cG}{\ensuremath{\mathcal G}} 
\newcommand{\cI}{\ensuremath{\mathcal I}}
\newcommand{\cP}{\ensuremath{\mathcal P}}
\newcommand{\cS}{\ensuremath{\mathcal S}} 
\newcommand{\cT}{\ensuremath{\mathcal T}}
\newcommand{\bbE}{{\ensuremath{\mathbb E}} }
\newcommand{\bbN}{{\ensuremath{\mathbb N}} } 
\newcommand{\bbP}{{\ensuremath{\mathbb P}} } 
\newcommand{\bbR}{{\ensuremath{\mathbb R}} }
\newcommand{\bbZ}{{\ensuremath{\mathbb Z}} } 
\newcommand{\N}{\ensuremath{\mathbb{N}}}
\renewcommand{\P}{\ensuremath{\mathbb{P}}}
\def\({\left(}
\def\){\right)}
\def\[{\left[}
\def\]{\right]}
\newacro{NE}{Nash equilibrium}
\newacro{PNE}{pure Nash equilibrium}
\newacro{PFNE}{prior-free Nash equilibrium}
\newacro{WE}{Wardrop equilibrium}
\newacro{SO}{socially optimum}
\newacro{KKT}{Karush\textendash Kuhn\textendash Tucker}
\newacro{OD}[O/D]{origin-destination}
\newacro{PoA}{price of anarchy}
\newacro{PoS}{price of stability}
\newacro{PoCS}{price of correlated stability}
\newacro{BPR}{bureau of public roads}
\newacro{FIP}{finite improvement property}
\newacro{BPG}{buck-passing game}
\newacro{SBPG}{stochastic buck-passing game}
\newacro{MBPG}{mixed extension of the buck-passing game}
\newacro{DCM}{directed configuration model}
\newacro{OCM}{out configuration model}
\begin{document}
\title
[Mixing time of PageRank surfers on sparse random digraphs]
{Mixing time of PageRank surfers\\ on sparse random digraphs%
}
\author[P.~Caputo]{Pietro Caputo$^{\sharp}$}
\address{$^{\sharp}$ Dipartimento di Matematica e Fisica, Universit\`a  Roma Tre, Largo Murialdo 1, 00146 Roma, Italy.}
\email{caputo@mat.uniroma3.it}

\author[M.~Quattropani]{Matteo Quattropani$^{\flat}$}
\address{$^{\flat}$ Dipartimento di Matematica e Fisica, Universit\`a Roma Tre, Largo Murialdo 1, 00146 Roma, Italy.}
\email{matteo.quattropani@uniroma3.it}
%
\date{\today}
\subjclass[2010]{Primary: 05C81, 60J10, 60C05. Secondary: 60G42}

\keywords{PageRank, random digraphs, non-reversible Markov chain, mixing time, random walks on networks.}
\maketitle 
\justifying
\begin{abstract}

We consider the generalised PageRank walk on a digraph $G$, with refresh probability $\alpha$ and resampling distribution $\lambda$.
We analyse convergence to stationarity when $G$ is a large sparse random digraph with given degree sequences, in the limit
of vanishing $\alpha$. We identify three scenarios: when $\alpha$ is much smaller than the inverse of the
mixing time of $G$ the relaxation to equilibrium is dominated by the simple random walk and displays a cutoff
behaviour; when $\alpha$ is much larger than the inverse of the mixing time of $G$ on the contrary one has pure
exponential decay with rate $\alpha$; when $\alpha$ is comparable to the inverse of the mixing time of $G$ there is a mixed
behaviour interpolating between cutoff and exponential decay. This trichotomy is shown to hold uniformly
in the starting point and uniformly in the resampling distribution $\lambda$. 

%

\end{abstract}

\section{Introduction and results}
Given a directed graph $G=(V,E)$ and a parameter $\a\in(0,1)$, 
the PageRank surf on $G$ with damping factor $1-\a$ is the Markov chain 
with state space $V$ and transition probabilities given by
 \begin{equation}\label{de:pstar}
P_\a(x,y)=(1-\a)P(x,y)+\frac\a{n}, 
\end{equation}
where $n=|V|$ is the number of vertices of $G$, and, writing $d_x^+$ for the out-degree of vertex $x$, 
 \begin{equation}\label{de:pstar2}
 P(x,y)=\begin{cases} 1/d_x^+& \text{if } (x,y)\in E\\0 & \text{otherwise}
 \end{cases}
 \end{equation}
 denotes the transition matrix of the simple random walk  on $G$. The interpretation is that of a surfer that at each step, with probability $1-\a$ moves to  a vertex chosen uniformly at random among  the out-neighbours of its current state, and with probability $\a$ moves to a uniformly random vertex in $V$.   
The surfer reaches eventually a stationary distribution $\pi_\a$ over $V$, called the PageRank of $G$.
Since its introduction by Brin and Page in the seminal paper \cite{BrinPage}, PageRank has played a fundamental role in the ranking functions of all major search engines; see e.g.\ \cite{easley2010networks,gleich2015pagerank}. 
A common generalization is the so-called customised or generalised PageRank, where the uniform resampling is replaced by an arbitrary probability distribution $\l$ over $V$, so that \eqref{de:pstar} becomes
\begin{equation}\label{de:pstarla}
P_{\a,\l}(x,y)=(1-\a)P(x,y)+\a\l(y). 
\end{equation}
The resulting stationary distribution $\pi_{\a,\l}$,  characterised by the equation 
\begin{equation}\label{statio}
\pi_{\a,\l}(y) = \sum_{x\in V}\pi_{\a,\l}(x)P_{\a,\l}(x,y),
\end{equation}
 depends in a nontrivial way on the parameter $\a$ and the distribution $\l$. 
 There have been several 
 investigations of the structural properties of $\pi_{\a,\l}$; 
 see e.g.\ 
  ~\cite{jeh2003scaling,andersen2006local,BRESSAN2010199}; we refer in particular to the recent works  \cite{chen2017generalized,GarVDHLit:LWPR2018,vial2019structural} for cases where the graph $G$ is drawn from the configuration model. 
Here we focus on the dynamical problem of determining the time needed for the surfer to reach the equilibrium distribution $\pi_{\a,\l}$, namely we study the mixing time of the Markov chain with transition matrix $P_{\a,\l}$.
In the case $\a=0$, this corresponds to the classical question of determining the mixing time of the simple random walk on the graph $G$; see e.g.\ \cite{LevPer:AMS2017}. Even for graphs where the latter 
is well understood, it is in general  not immediate to deduce the influence of the parameter $\a$ and of the resampling distribution $\l$ on the speed of convergence to equilibrium.  

It is intuitively reasonable to guess that if the parameter $\a$ is suitably large compared to the inverse of the mixing time of the graph $G$, then the time to reach stationarity will be essentially the expected time needed to make the first $\l$-resampling transition, that is a geometric random variable with parameter $\a$, while if $\a$ is suitably small  compared to the inverse of the mixing time of the graph $G$, then one should reach stationarity well before the first $\l$-resampling, so that the speed of convergence to equilibrium will be essentially that of the simple random walk on  $G$. Moreover, one could expect that when $\a$ is neither too small nor too large compared to the inverse of the mixing time of the graph $G$, then some interpolation between the two opposite behaviours should take place. In this paper we substantiate this intuitive picture for a large class of sparse directed graphs. The results hold uniformly in the initial position and uniformly in the resampling distribution $\l$.

 \subsection{Two models of sparse digraphs}\label{sec:twomod}
 We shall consider two families of directed graphs. Both are obtained via the so-called configuration model, with the difference that in the first case we fix both in and out degrees, while in the second case we only fix the out degrees. The models are sparse in that the degrees are bounded. We now proceed with the formal definition.
 
 Let $V$ be a set of $n$ vertices. For simplicity we often write $V=[n]$, with $[n]=\{1,\dots,n\}$.
 For each $n$, we are given two finite sequences $\bd^+=(d_x^+)_{x\in[n]}$ and $\bd^-=(d_x^-)_{x\in[n]}$ of non negative integers such that 
 \begin{equation}\label{degsm}
 m=\sum_{x\in V}d_x^+=\sum_{x\in V}d_x^-.
 \end{equation} 
 The {\em directed configuration model} DCM($\bd^\pm$), is the distribution of the random graph $G$ obtained as follows: 1) equip each node $x$ with $d_x^+$ tails and $d_x^-$ heads; 2) pick uniformly at random one of the $m!$ bijective maps  
 from the set of all tails into the set of all heads, call it $\o$; 3) for all $x,y\in V$, add a directed edge $(x,y)$ every time a tail from $x$ is mapped into a head from $y$ through $\o$. The resulting graph $G$ may have self-loops and multiple edges, however it is classical that by conditioning on the event that there are no multiple edges and no self-loops one obtains a uniformly random simple digraph with in degree sequence $\bd^-$ and out degree sequence $\bd^+$.  
 
 Structural properties of random graphs obtained in this way have been extensively studied in  ~\cite{CooFri:SizeLargSCRandDigr2002}. Here we shall consider the sparse case corresponding to bounded degree sequences. Moreover, in order to avoid non irreducibility issues, we shall assume that all degrees are at least $2$. Thus, throughout this work it will always be assumed that 
 \begin{equation}\label{degs}
\min_{x\in[n]}d_x^-\wedge d_x^+\ge 2,\qquad\max_{x\in[n]}d_x^-\vee d_x^+=O(1). 
\end{equation}
We often use the notation $\D=\max_{x\in[n]}d_x^-\vee d_x^+$. Under the first assumption  it is known that DCM($\bd^\pm$) is strongly connected with high probability; see e.g.~\cite{CooFri:SizeLargSCRandDigr2002}. Under the second assumption, it is known that DCM($\bd^\pm$) has a uniformly (in $n$) positive probability of having no self-loops nor multiple edges; see e.g.\ \cite{chen2013directed}. In particular, any property that holds with high probability for DCM($\bd^\pm$) will also hold with high probability for a uniformly chosen simple digraph subject to the constraint that in and out degrees be given by $\bd^-$ and $\bd^+$ respectively. 
Here and throughout the rest of the paper we say that a property holds {\em with high probability} (w.h.p. for short) if the probability of the corresponding event converges to $1$ as $n\to\infty$. In particular, it follows that w.h.p.\ there exists a unique stationary distribution $\pi_0$ for the simple random walk on $G$. Several properties of $\pi_0$ have been established recently in  \cite{BCS1}, where it was shown, among other facts, that $\pi_0$ can be described in terms of recursive distributional equations determined by the sequences $\bd^\pm$. 
 
 To define the second model, for each $n$ let $\bd^+=(d_x^+)_{x\in[n]}$ be a finite sequence of non negative integers and define the {\em out-configuration model} OCM($\bd^+$) as the distribution of the random graph $G$ obtained as follows: 1) equip each node $x$ with $d_x^+$ tails; 2)  pick, for every $x$ independently, a uniformly random injective map 
from  the set of tails at $x$ to the set of all vertices $V$, call it $\o_x$; 3) for all $x,y\in V$, add a directed edge $(x,y)$ if a tail from $x$ is mapped into $y$ through $\o_x$.
Equivalently, $G$ is the graph whose adjacency matrix is uniformly random in the set of all $n\times n$ matrices with entries $0$ or $1$ such that every row $x$ sums to $d_x^+$. Notice that $G$ may have self-loops, but there are no multiple edges in this construction. This is due to the requirement that the maps $\o_x$ be injective. The latter choice is only a matter of convenience, and everything we say below is actually seen to hold as well for the model obtained by dropping that requirement.   We write $\o=(\o_x)_{x\in [n]}$ for the collection of maps. 
As before we shall make the assumptions  \begin{equation}\label{degs+}
\min_{x\in[n]}d_x^+\ge 2,\qquad\max_{x\in[n]} d_x^+=O(1), 
\end{equation}
and use the notation $\D=\max_{x\in[n]} d_x^+$. 
We remark that under the above assumptions there can still be vertices with in-degree zero, and therefore in this case $G$ is not necessarily strongly connected. However, it is still possible to show that w.h.p.\ there exists a unique 
stationary distribution $\pi_0$ for the simple random walk on $G$; see e.g.\ \cite{addario2015diameter,BCS2} for more details. 

In what follows $G=G(\o)$ denotes a given realization of either the directed configuration model  DCM($\bd^\pm$) or the out-configuration model 
OCM($\bd^+$) and all the results to be discussed will hold w.h.p.\ within these two ensembles. For the sake of simplicity we often refer to these as {\em model 1} and {\em model 2} respectively.

 \subsection{Main results}\label{sec:mainresults}
Let $P$ denote the transition matrix 
of the simple random walk on $G$. When $G$ is a digraph without multiple edges this is given by \eqref{de:pstar2}. If $G$ has multiple edges, $P(x,y)$ is defined as $m(x,y)/d^+_x$ where $m(x,y)$ denotes the number of directed edges from $x$ to $y$.   
For any $\a\in(0,1)$ and any resampling distribution $\l$, let $P_{\a,\l}$ denote the PageRank transition matrix defined in \eqref{de:pstarla}. Notice that as soon as $\a>0$, regardless of the realization of the graph $G$ and of the chosen distribution $\l$, there exists a unique 
stationary distribution $\pi_{\a,\l}$ on $V$. Indeed, the transition matrix  $P_{\a,\l}$ satisfies the so-called Doeblin condition if $\a>0$; see  Proposition \ref{prop:expa} below for an explicit expression of $\pi_{\a,\l}$.  
Convergence to equilibrium will be quantified using the total variation distance. For two probability measures $\mu,\nu$, the latter is defined by
\begin{equation}\label{de:totvar}
\|\mu-\nu\|_\tv = \max_E |\mu(E)-\nu(E)|,
\end{equation}
where the maximum ranges over all possible events in the underlying probability space. 
Starting at a node $x$ the distribution of the PageRank surfer after $t$ steps is  $P^t_{\a,\l}(x,\cdot)$, and the distance to equilibrium is defined by
\begin{equation}\label{dxt}
\cD^x_{\a,\l}(t)=\left\| P^t_{\a,\l}(x,\cdot)-\pi_{\a,\l}\right\|_{\tv}.
\end{equation}
This defines a non-increasing function of $t\in\bbN$. It is convenient to extend it to a monotone function of $t\in[0,\infty)$, e.g.\ by considering the integer part of the argument.  
Finally, for any $\e\in(0,1)$, the 
$\e$-{\em mixing time} is defined by 
\begin{equation}\label{tmixe}
T_{\a,\l}(\e)=\inf \left\{t\geq 0: \;\max_{x\in V}\cD^x_{\a,\l}(t) \leq \e\right\} .
\end{equation}
Both $\cD^x_{\a,\l}(t)$ and $T_{\a,\l}(\e)$ are functions of the underlying graph $G$, and are therefore random variables.  
When $\a=0$, we write $\cD^x_0(t)$ and $T_0(\e)$ for the corresponding quantities. 
The behaviour of the distance $\cD^x_0(t)$ and of the mixing time $T_0(\e)$ has been thoroughly investigated in \cite{BCS1} for model 1 and in \cite{BCS2} for model 2. 
Let us briefly recall the main conclusions of these works. In order to simplify the exposition, we shall adopt the following unified notation. Let us define the {\em in-degree distribution}
\begin{equation}\label{def:muin}
\muin(x)=\frac1n\times \begin{cases}d^-_x /\left\langle d\right\rangle & \text{{\small model 1}}\\
1 & \text{{\small model 2}}
\end{cases}
\end{equation}
where we use the notation $$\left\langle d\right\rangle =\frac1n\sum_{x\in V} d^-_x = \frac{m}n$$ for the average degree. Note that for model 2 the distribution $\muin$ represents the average in-degrees rather than  the actual in-degrees.
Next, let the {\em entropy} $H$ and the associated {\em entropic time} $\tent$ be defined by 
\begin{equation}\label{def:tent}
H=\sum_{x\in V}\muin(x)\log d^+_x, \;\qquad \tent = \frac{\log n}H.
\end{equation}
Note that under our assumptions on $\bd^\pm$ the deterministic quantities $H,\tent$ satisfy $H=\Theta(1)$ and $\tent = \Theta(\log n)$. The main results of \cite{BCS1,BCS2} state that, uniformly in the starting point $x\in V$,  the rescaled function $\cD^x_0(s \,\tent)$, $s>0$, converges in probability as $n\to\infty$ to the step function 
  \begin{equation}\label{def:theta}
\vartheta(s)=\begin{cases}1 & \text{{\small if \,}} s<1\\ 0&\text{{\small if \,}} s>1.
\end{cases}
\end{equation}
More precisely, we may combine \cite[Theorem 1]{BCS1} and \cite[Theorem 1]{BCS2} to obtain the following statement.
\begin{theorem}[Uniform cutoff at the entropic time \cite{BCS1,BCS2}]\label{th:BCS}
Let $G$ be a random graph from either
the directed configuration model  DCM($\bd^\pm$) or the out-configuration model 
OCM($\bd^+$).
For each $s>0, s\neq 1$ one has:  
\begin{equation}\label{cutoff1}
\max_{x\in [n]}\left|\cD^x_0(s \,\tent) - \vartheta(s)
\right|\overset{\P}{\longrightarrow}0.
\end{equation}
\end{theorem}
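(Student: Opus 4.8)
The statement \eqref{cutoff1} is a consolidation of two results already in the literature: it is \cite[Theorem~1]{BCS1} for model~1 and \cite[Theorem~1]{BCS2} for model~2, so at the level of this paper the plan is simply to reconcile the normalisations and quote those results. Concretely, I would check that the entropy $H$ and entropic time $\tent$ of \eqref{def:tent} coincide with the ones used in the cited works. For \textup{DCM}$(\bd^\pm)$ the measure $\muin(x)=d_x^-/m$ is the genuine in-degree law and $H=\sum_x\muin(x)\log d_x^+$ is already deterministic, so nothing is needed. For \textup{OCM}$(\bd^+)$ the in-degrees are random, but each is a sum of $n$ bounded independent indicators whose means sum to $m/n$, so the ``empirical'' entropy of \cite{BCS2} concentrates on $\sum_x\muin(x)\log d_x^+$ to within $o(1)$ by Hoeffding, and \eqref{cutoff1} then holds with the deterministic $\tent$; the factor $\langle d\rangle$ in \eqref{def:muin} is only the normalisation making $\muin$ a probability measure.

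Because the rest of the paper bootstraps from \eqref{cutoff1}, I would also recall the mechanism behind it, namely the \emph{entropic method} for random walks on sparse digraphs. Let $(X_t)_{t\ge0}$ be the simple random walk and $P$ its kernel. Revealing the configuration-model pairing along the trajectory, each fresh step lands the walk at an approximately in-degree–biased vertex, $\P(X_{i+1}=y\mid\text{past})\approx\muin(y)$, and the walk meets its past only with probability $O((\log n)^2/n)=o(1)$ during the first $\Theta(\log n)$ steps; consequently the ``entropy functional'' $w_t:=-\log P^t(x,X_t)$ behaves like a sum of near-i.i.d.\ increments $\log d_{X_i}^+$ of mean $H$, so $w_t=Ht\,(1+o_{\P}(1))$. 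In words, at time $t$ the walk sits on a vertex of $P^t(x,\cdot)$-mass about $e^{-Ht}=n^{-t/\tent}$, and $P^t(x,\cdot)$ is carried, up to an $n^{o(1)}$ factor, by a set of $e^{Ht}$ vertices of comparable mass.

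This already gives the lower bound ($s<1$): using the fact — also established in \cite{BCS1,BCS2} — that $\|\pi_0\|_\infty\le n^{-1+o(1)}$ w.h.p., the set $S_t=\{y:P^t(x,y)\ge n^{-s-\varepsilon}\}$ carries almost all of the walk's mass while $\pi_0(S_t)\le|S_t|\,n^{-1+o(1)}=n^{s-1+\varepsilon+o(1)}\to0$ for $\varepsilon<1-s$, so $\cD^x_0(s\tent)\ge1-o(1)$. The upper bound ($s>1$) is the substantial direction: one must show that once the accumulated entropy has reached $\log n$ the law of $X_t$ is genuinely $\pi_0$ up to $o(1)$ in total variation. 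The route in \cite{BCS1,BCS2} is to match $P^t(x,\cdot)$ against the recursive distributional characterisation of $\pi_0$ from \cite{BCS1}: $\pi_0(y)$ carries the same rate-$H$ exponent $e^{-Ht}$ once it is unfolded along incoming paths, and a second-moment estimate shows that the $P^t(x,\cdot)$-mass and the $\pi_0$-mass coincide on all but a $o(1)$-fraction of vertices; a further $\varepsilon\tent$ steps, negligible against $\tent$, absorb the residual discrepancy.

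The hard part — and where essentially all the effort in \cite{BCS1,BCS2} is spent — is this upper bound, together with making every estimate above hold simultaneously for all $n$ starting vertices. Once $t$ approaches $\tent$ the forward exploration is no longer genuinely tree-like, so $P^t(x,y)$ is not a single path product and must instead be controlled through the recursive distributional equations for $\pi_0$ and through concentration bounds sharp enough to survive a union bound over $x$ (the naive $(\log n)^2/n$ collision estimate, for instance, is not summable over the $n$ choices of $x$ and has to be replaced by a first-moment count over trajectories). By contrast the lower bound and the notational bookkeeping of the first paragraph are routine.
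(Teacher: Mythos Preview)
Your proposal is correct and matches the paper's approach exactly: Theorem~\ref{th:BCS} is not proved in this paper but simply quoted from \cite[Theorem~1]{BCS1} and \cite[Theorem~1]{BCS2}, as stated in the sentence preceding it. One minor point: the Hoeffding step you propose for model~2 is unnecessary, since in \cite{BCS2} the entropy is already defined as the deterministic quantity $H=\tfrac1n\sum_x\log d_x^+$ (the out-degrees are fixed in OCM, and this is precisely the paper's $H$ with $\muin\equiv 1/n$), so no reconciliation is needed; your expository sketch of the entropic method is accurate but goes beyond what the paper does.
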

In \eqref{cutoff1} we use the notation $\overset{\P}{\longrightarrow}$ for convergence in probability as $n\to\infty$. In terms of mixing times,  \eqref{cutoff1} implies in particular that for any $\e\in(0,1)$:
\begin{equation}\label{cutoff2}
\frac{T_0(\e)}{\tent}
\overset{\P}{\longrightarrow}1,
\end{equation}
The fact that the distance to equilibrium approaches a step function, or equivalently that the $\e$-mixing time is to leading order insensitive to the value of $\e\in(0,1)$, is commonly referred to as a {\em cutoff phenomenon}; see e.g.\ \cite{diaconis1996cutoff,LevPer:AMS2017} for a review. We also refer to \cite{lubetzky2010cutoff,ben2017cutoff,berestycki2018random} for similar results in the case of undirected graphs. We stress that a fundamental difference between the case of undirected graphs and the case of directed graphs considered here is that the underlying stationary distribution $\pi_0$ is not known explicitly in the directed case.   

We now formulate our main results. To obtain explicit asymptotic statements we shall assume that $\a=\a(n)\in(0,1)$ is a sequence such that $\a\to 0$ and such that the limit
\begin{equation}\label{def:beta}
\g=\lim_{n\to\infty}\a\, \tent\in[0,\infty]
\end{equation}
exists, with possibly $\g=0$ or $\g=\infty$. 
 We call $\cS_n$ the set of all probability measures on $[n]$.

\begin{theorem}\label{th:general}
Let $G$ be a random graph from either
the directed configuration model  DCM($\bd^\pm$) or the out-configuration model 
OCM($\bd^+$). Let $\a=\a(n)\in(0,1)$ be  parameters as in \eqref{def:beta}.  Then, according to the value of $\g$ there are three scenarios: 
\begin{enumerate}
	\item[$\textbf{(1)}$] If $\g=0$ then for all $s>0, s\neq 1$:
	\begin{equation}\label{scen1}
	\max_{\l\in\cS_n}\max_{x\in[n]}\left|\cD_{\a,\l}^x(s \,\tent)-\vartheta(s)\right|\overset{\P}{\longrightarrow}0.
		\end{equation}
	\item[$\textbf{(2)}$]  If $\g\in\(0,\infty\)$ then for all $s>0,s\neq \g$:
	\begin{equation}\label{scen2}
	\max_{\l\in\cS_n}\max_{x\in[n]}\left|\cD_{\a,\l}^x(s/\a)-e^{-s}\vartheta(s/\g)\right|\overset{\P}{\longrightarrow}0.
		\end{equation}
	\item[$\textbf{(3)}$]  If $\g=\infty$ then for all $s>0$:
	\begin{equation}\label{scen3}
	\max_{\l\in\cS_n}\max_{x\in[n]}\left|\cD_{\a,\l}^x(s /\a)-e^{-s}\right|\overset{\P}{\longrightarrow}0.
		\end{equation}
\end{enumerate}
\end{theorem}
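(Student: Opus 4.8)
The plan is to reduce the dynamical problem to the known cutoff result for the simple random walk (Theorem~\ref{th:BCS}) by exploiting the explicit structure of the PageRank chain. The starting observation is that $P_{\a,\l}$ admits a clean probabilistic interpretation: one runs the simple random walk on $G$ but, at each step independently, with probability $\a$ the walker is ``reset'' to a fresh sample from $\l$. Equivalently, let $N_t \sim \mathrm{Bin}(t,\a)$ count the number of resamplings up to time $t$, let $\sigma$ be the time of the \emph{last} resampling before $t$ (with $\sigma=0$ if $N_t=0$ meaning ``no reset, started from $x$''), and then conditionally on the geometry the distribution at time $t$ is a mixture: with probability $(1-\a)^t$ it is $P^t(x,\cdot)$, and otherwise it is $\sum_{r\ge 1}\mathbb{P}(\text{last reset at }t-r)\,(\l P^{r})(\cdot)$. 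I would first establish, via Proposition~\ref{prop:expa} (the Doeblin/series expansion for $\pi_{\a,\l}$), that $\pi_{\a,\l}$ itself equals $\a\sum_{r\ge 0}(1-\a)^r \l P^{r}$, so that $\pi_{\a,\l}$ is exactly the ``$t=\infty$'' version of the same mixture. This makes the coupling transparent: couple the resampling times of the two chains (one started at $x$, one at $\pi_{\a,\l}$) and, once both have resampled, further couple them through the simple-random-walk coupling that underlies Theorem~\ref{th:BCS}.

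The core estimate is then a triangle-inequality decomposition of $\cD^x_{\a,\l}(t)$ into two pieces: (i) the probability that by time $t$ no resampling has occurred, which is exactly $(1-\a)^t \to e^{-s}$ when $t=s/\a$ and $\to 0$ when $t = s\,\tent$ with $\g=0$ (since then $\a\,\tent\to 0$ forces $(1-\a)^{s\tent}\to 1$, wait---here one needs care: when $\g=0$ and $t=s\tent$, $(1-\a)^t\to 1$, so the ``no reset'' branch dominates and one simply uses that $P^t(x,\cdot)$ is already close to $\pi_0\approx\pi_{\a,\l}$); and (ii) conditionally on at least one resampling having occurred by time $t$, the distance between the post-reset distribution $\l P^{R}$ (with $R$ the elapsed time since the last reset, which is of order $1/\a$ and hence $\gg \tent$ whenever $\g=\infty$, and of order $\g\,\tent$ when $\g\in(0,\infty)$) and the stationary distribution. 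For piece (ii) I would show that $\max_\l \|\l P^{R} - \pi_0\|_\tv \to 0$ whenever $R \ge (1+\delta)\tent$ w.h.p.\ --- this is the \emph{uniform-in-$\l$} upgrade of Theorem~\ref{th:BCS}, which follows because $\|\l P^r - \pi_0\|_\tv = \|\sum_x \l(x) (P^r(x,\cdot)-\pi_0)\|_\tv \le \sum_x \l(x)\cD^x_0(r) \le \max_x \cD^x_0(r)$, so the convexity in $\l$ costs nothing. Conversely, when $R < (1-\delta)\tent$ one uses the lower bound $\cD^x_0(r)\approx 1$ uniformly, which needs a bit more: one must check that $\l P^{R}$ is still far from $\pi_0$, and here the worst case $\l=\delta_x$ recovers exactly the simple-random-walk lower bound, so $\max_\l$ does \emph{not} help the adversary beyond what Theorem~\ref{th:BCS} already gives.

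Assembling the three scenarios is then bookkeeping on the renewal structure. In scenario \textbf{(3)} ($\g=\infty$), at time $t=s/\a$ the number of resamplings is $\mathrm{Bin}(s/\a,\a)\approx \mathrm{Poisson}(s)$, so ``no reset'' has probability $\to e^{-s}$ and contributes a total-variation mass $e^{-s}$ (since $P^{s/\a}(x,\cdot)$ vs.\ $\pi_{\a,\l}$ are mutually singular-ish at the relevant scale---more precisely the ``no-reset'' part of the chain's law is $e^{-s}$-far because $\pi_{\a,\l}$ puts only $\le \a$-order mass on any ``just started at $x$'' configuration), while conditionally on $\ge 1$ reset the elapsed time since the last reset is $\gg \tent$ w.h.p., so that branch has contributed nothing; hence $\cD\to e^{-s}$. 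In scenario \textbf{(2)} ($\g\in(0,\infty)$), at $t=s/\a$ the elapsed time since the last reset is of order $\g\,\tent$, and by the cutoff step function the post-reset branch is mixed iff $s>\g$; combining with the $e^{-s}$ weight on the no-reset branch gives the claimed $e^{-s}\vartheta(s/\g)$. In scenario \textbf{(1)} ($\g=0$), at $t=s\,\tent$ essentially no reset has occurred ($(1-\a)^{s\tent}\to 1$), so the chain is within $o(1)$ of the plain simple random walk and $\cD^x_{\a,\l}(s\tent)\to\vartheta(s)$ by Theorem~\ref{th:BCS} together with $\|\pi_{\a,\l}-\pi_0\|_\tv\to 0$ (the latter again from the series expansion, since $\pi_{\a,\l}=\a\sum_r (1-\a)^r \l P^r$ and $\l P^r\to\pi_0$ for $r\gg\tent$ while the $r\lesssim\tent$ tail carries $\a$-weight $O(\a\tent)=o(1)$).

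The main obstacle I anticipate is \textbf{not} the renewal bookkeeping but the \emph{uniformity in $\l$ on the lower-bound side}, and the need to show that the two branches (reset / no-reset) are genuinely total-variation-separated rather than merely weighted --- i.e.\ proving that $\|P^t_{\a,\l}(x,\cdot)-\pi_{\a,\l}\|_\tv$ really does pick up the full weight $(1-\a)^t$ from the no-reset event, uniformly in $\l$. This requires exhibiting, w.h.p.\ over $G$, a test set $E=E(x,t)$ on which $P^t(x,\cdot)$ concentrates but $\pi_{\a,\l}$ does not, uniformly over all $\l$; the natural candidate is the set of vertices reachable from $x$ in exactly $t$ steps along the walk's ``tree-like'' neighbourhood, whose $\pi_0$-mass (and hence $\pi_{\a,\l}$-mass up to $O(\a)$) is $o(1)$ because of the entropy/expansion estimates from \cite{BCS1,BCS2}. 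Making this work uniformly in $\l$ is where the bulk of the technical effort --- importing the environment estimates of \cite{BCS1,BCS2} and checking they survive the $\a$-perturbation and the $\max_\l$ --- will be concentrated.
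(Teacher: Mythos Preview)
Your high-level architecture matches the paper's: the renewal/last-reset decomposition you describe is exactly the probabilistic content of the paper's key identity (Proposition~\ref{prop:tv}),
\[
\cD^x_{\a,\l}(t)=(1-\a)^t\bigl\|P^t(x,\cdot)-\pi_{\a,\l}P^t\bigr\|_\tv,
\]
and the upper bounds, scenario~1, and the case $s>\g$ in scenario~2 all go through essentially as you say (the paper does them just this way in Section~\ref{sec:trichotomy}). You also correctly locate the real difficulty: the lower bound in scenarios~2 and~3, uniformly in $\l$, hinges on showing that the test set $E=\cA_x(t)$ (leaves of the out-tree $\cT_x(t)$) carries negligible mass under $\pi_{\a,\l}P^t$.

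There is, however, a genuine gap at precisely this point. Your proposed mechanism --- ``$\pi_0$-mass (and hence $\pi_{\a,\l}$-mass up to $O(\a)$)'' --- is false uniformly in $\l$. The stationary measures $\pi_{\a,\l}$ and $\pi_0$ are \emph{not} close in general: the paper notes explicitly (equation~\eqref{eq:napproxo}) that in scenario~3, $\min_z\|\pi_{\a,\d_z}-\pi_0\|_\tv\to 1$. Concretely, if $\l=\d_x$ then $\pi_{\a,\l}(\cA_x(t))\ge \a(1-\a)^t P^t(x,\cA_x(t))\approx \a e^{-s}$, which is small; but nothing in your argument rules out an adversarial $\l$ that places mass on vertices $z$ from which $P^k(z,\cA_x(t))$ is large for \emph{many} values of $k$ simultaneously, which would make $\sum_k \a(1-\a)^k \l P^k(\cA_x(t))$ of order $1$. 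The ``entropy/expansion estimates'' of \cite{BCS1,BCS2} that you invoke only tell you $|\cA_x(t)|=o(n)$ and $\pi_0(\cA_x(t))=o(1)$; they say nothing about how a localised measure $\l P^k$ interacts with $\cA_x(t)$.

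What the paper actually does here is substantially more delicate and is the technical heart of the argument. It proves a structural \emph{anti-concentration} lemma (Lemma~\ref{lemma:new}): with high probability, for every pair of vertices $x,z$ and every $u\le(1-\eta)\tent$, no single path in the out-tree $\cT_z(u)$ can visit the leaf set $\cA_x(t)$ more than a fixed constant $K=K(\eta,\Delta)$ times. This is a combinatorial statement about the random graph, proved by a careful sequential-revelation argument and a union bound over paths. It immediately yields $\sum_{k\le(1-\eta)\tent}P^k(z,\cA_x(t))\le K+o(1)$ uniformly in $z$, hence $\mu_\l(\cA_x(t))\lesssim \a K\to 0$ uniformly in $\l$ (Lemma~\ref{le:key}), which is exactly the missing ingredient. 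Without an estimate of this type, the uniform-in-$\l$ lower bound does not go through, and your proposal as written does not contain a substitute.

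A secondary remark: your discussion of ``piece (ii)'' for the lower bound (``when $R<(1-\delta)\tent$ one must check that $\l P^R$ is still far from $\pi_0$'') is not the relevant issue. For the lower bound you never need the reset branch to be far from anything; what you need is that the \emph{no-reset} branch $P^t(x,\cdot)$ is separated from $\pi_{\a,\l}P^t$, and the latter may itself contain a large ``unmixed'' component $\mu_\l$ (in the paper's notation, Lemma~\ref{le:decomp}). Showing that this component nonetheless misses $\cA_x(t)$ is exactly where Lemma~\ref{lemma:new} enters.
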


In terms of mixing times, Theorem \ref{th:general} implies the following statements. 
\begin{corollary}\label{cor:general}
In the setting of Theorem \ref{th:general}, the following holds uniformly with respect to $\l$:
\begin{enumerate}
	\item[$\textbf{(1)}$] If $\g=0$ then for all $\e\in(0,1)$
	\begin{equation}\label{2scen1}
\frac{T_{\a,\l}(\e)}{\tent}
\overset{\P}{\longrightarrow}1,
\end{equation}
	\item[$\textbf{(2)}$]  If $\g\in\(0,\infty\)$:
	\begin{equation}\label{2scen2}
	\frac{T_{\a,\l}(\e)}{\tent}
\overset{\P}{\longrightarrow}\begin{cases}
1 & \text{{\small if \,}} \e\in(0,e^{-\g})\\ \frac1\g\log(1/\e)&\text{{\small if \,}} \e\in[e^{-\g},1).
\end{cases}
		\end{equation}
	\item[$\textbf{(3)}$]  If $\g=\infty$ then for all $\e\in(0,1)$:
	\begin{equation}\label{2scen3}
	\a\,T_{\a,\l}(\e)\overset{\P}{\longrightarrow}\log(1/\e).
		\end{equation}
\end{enumerate}
\end{corollary}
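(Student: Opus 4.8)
The plan is to deduce Corollary \ref{cor:general} from Theorem \ref{th:general} by the standard soft argument that turns convergence of a total variation profile into convergence of the associated mixing time. First I would set $f_{\a,\l}(t)=\max_{x\in[n]}\cD^x_{\a,\l}(t)$ and record that, each $\cD^x_{\a,\l}$ being non-increasing and extended to $[0,\infty)$ through the integer part of its argument, $f_{\a,\l}$ is a non-increasing, right-continuous step function, while the Doeblin condition (valid since $\a>0$) forces $f_{\a,\l}(t)\to0$; hence $T_{\a,\l}(\e)<\infty$ and, for every $t\ge0$,
\begin{equation}\label{eq:Tequiv}
T_{\a,\l}(\e)\le t\iff f_{\a,\l}(t)\le\e,\qquad T_{\a,\l}(\e)>t\iff f_{\a,\l}(t)>\e .
\end{equation}

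Next I would introduce the scaling $r_n=\tent$ in the case $\g=0$ and $r_n=1/\a$ in the other two cases, together with the limit profile $\psi$ from Theorem \ref{th:general}: $\psi(s)=\vartheta(s)$, $\psi(s)=e^{-s}\vartheta(s/\g)$ and $\psi(s)=e^{-s}$ respectively, so that Theorem \ref{th:general} reads $\max_{\l\in\cS_n}\max_{x\in[n]}\big|\cD^x_{\a,\l}(sr_n)-\psi(s)\big|\overset{\P}{\longrightarrow}0$ for every $s>0$ away from the (at most one) discontinuity of $\psi$. A short inspection shows that in each case $\psi$ is strictly decreasing off that discontinuity, with a unique level-$\e$ crossing point $s^\star=s^\star(\e)$: one takes $s^\star=1$ when $\g=0$; $s^\star=\log(1/\e)$ if $\e\ge e^{-\g}$ and $s^\star=\g$ if $\e<e^{-\g}$ when $\g\in(0,\infty)$; and $s^\star=\log(1/\e)$ when $\g=\infty$. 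These values are chosen precisely so that $\psi(s_-)>\e>\psi(s_+)$ whenever $s_-<s^\star<s_+$ avoid the discontinuity, and so that $s^\star$ (case $\g=0$), $s^\star/\g$ (case $\g\in(0,\infty)$) and $s^\star$ (case $\g=\infty$, after rescaling by $\a$) match the right-hand sides of \eqref{2scen1}, \eqref{2scen2} and \eqref{2scen3}; in particular the two regimes of \eqref{2scen2} agree at $\e=e^{-\g}$, where $s^\star=\g$ and the limit is $1$.

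The core step is a sandwich. Fixing $s_-<s^\star<s_+$ as above and $\eta=\tfrac12\min\big(\psi(s_-)-\e,\,\e-\psi(s_+)\big)>0$, Theorem \ref{th:general} applied at $s_-$ and $s_+$ gives, on an event of probability tending to $1$ and simultaneously for all $\l\in\cS_n$, that $f_{\a,\l}(s_+r_n)\le\psi(s_+)+\eta<\e$ and $f_{\a,\l}(s_-r_n)\ge\psi(s_-)-\eta>\e$; by \eqref{eq:Tequiv} this yields $s_-r_n<T_{\a,\l}(\e)\le s_+r_n$ for every $\l$ at once. When $\g=0$ this already reads $s_-<T_{\a,\l}(\e)/\tent\le s_+$ for all $\l$, and letting $s_\pm\to1$ gives \eqref{2scen1}. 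When $\g\in(0,\infty)$, I would divide by $\tent$ and use $\a\tent\to\g$, so that for $n$ large $T_{\a,\l}(\e)/\tent\in\big(s_-/(\a\tent),\,s_+/(\a\tent)\big]$ for all $\l$ with $s_\pm/(\a\tent)\to s_\pm/\g$; letting $s_\pm\to s^\star$ gives \eqref{2scen2}. When $\g=\infty$ I would instead multiply by $\a$ to get $\a T_{\a,\l}(\e)\in(s_-,s_+]$ for all $\l$ and let $s_\pm\to\log(1/\e)$, giving \eqref{2scen3}.

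I do not expect a genuine obstacle: the whole analytic content lies in Theorem \ref{th:general}, and the present statement is a deterministic manipulation once the profile convergence is granted. The two points I would be careful about are the uniformity over $\l\in\cS_n$ — which is already built into Theorem \ref{th:general} and propagates automatically, since the bad event in the sandwich is a single event that handles all $\l$ simultaneously — and the borderline value $\e=e^{-\g}$ in \eqref{2scen2}, where the two candidate limits coincide and the crossing sits exactly at the discontinuity $s^\star=\g$; both are dealt with by the case analysis above.
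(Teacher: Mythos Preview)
Your proposal is correct and is exactly the standard passage from profile convergence to mixing-time convergence that the paper has in mind; in fact the paper does not spell out a proof at all, simply stating that Theorem \ref{th:general} ``implies'' the corollary. Your sandwich argument, the identification of the crossing points $s^\star$ in each scenario, and the treatment of the borderline $\e=e^{-\g}$ are all accurate and fill in precisely the details the authors omit.
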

The  trichotomy displayed in Theorem \ref{th:general} and Corollary \ref{cor:general} reflects the competition between two distinct mechanisms of relaxation to equilibrium: the simple random walk dominates in the first scenario, while the $\l$-resampling dominates in the third; the intermediate scenario interpolates between the two extremes; see Figure \ref{fig:fig}. 

\begin{figure}[h]
	\centering
	\includegraphics[width=16cm]{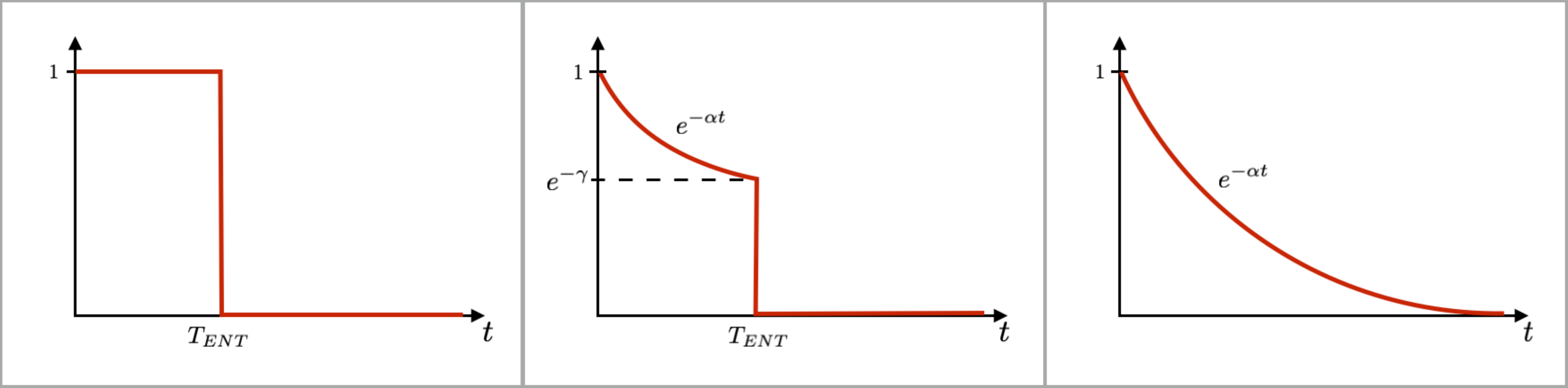}\\
	\caption{}	\label{fig:fig}
\end{figure}

Remarkably, essentially the same  trichotomy was uncovered recently by \cite{AveGulVDHdH:DirConf2018} in a model of random walk on dynamically evolving undirected graphs. In that case, the role of the resampling is played by the underlying reshuffling of the graph edges. It is interesting to observe that, in contrast with the undirected case considered in \cite{AveGulVDHdH:DirConf2018}, in our setting the two competing processes may well have very distinct goals, and the overall stationary distribution $\pi_{\a,\l}$ is the result of a nontrivial balance.  

To give some guidelines, below we illustrate the main ideas involved in the proof. 

The starting point is the observation that the distance to stationarity $\cD_{\a,\l}^x(t)$ satisfies the following general identity at all times $t$, for all choices of the parameter $\a$ and distribution $\l$:
\begin{equation}\label{eq:gen}
\left\| P^t_{\a,\l}(x,\cdot)-\pi_{\a,\l}\right\|_{\tv} = (1-\a)^t\left\| P^t(x,\cdot)-\pi_{\a,\l}P^t\right\|_{\tv}\,.
		\end{equation}
		Here we use the notation $\mu P^t(y)=\sum_{x\in V}\mu(x)P^t(x,y)$ for  the distribution at time $t$ of the simple random walk started at a random vertex distributed according to some distribution $\mu$.
		The relation \eqref{eq:gen} follows from a simple coupling argument; see Proposition \ref{prop:tv} below. Moreover, the stationary distribution admits the power series expansion 
		\begin{equation}\label{eq:gen2}
\pi_{\a,\l}= \a\sum_{k=0}^\infty(1-\a)^k\l P^k\,,
		\end{equation} 
see Proposition \ref{prop:expa} below. 
A particularly simple special case is when the resampling distribution $\l$ equals the stationary distribution $\pi_0$.
Indeed, in this case the stationary distribution is the result of a trivial balance 
and $\pi_{\a,\l}=\pi_0$, so that \eqref{eq:gen} becomes
\begin{equation}\label{spi}
\cD_{\a,\pi_0}^x(t) = (1-\a)^t\cD_{0}^x(t)\,.
		\end{equation}
Therefore, when $\l=\pi_0$ the results in Theorem \ref{th:general} are an immediate consequence of Theorem \ref{th:BCS}. Moreover, this shows that the trichotomy in Theorem \ref{th:general} follows from Theorem \ref{th:BCS}
whenever the distribution $\l\in\cS_n$ is such that 
\begin{equation}\label{eq:wapprox}
\left\| \pi_{\a,\l} - \pi_0\right\|_{\tv}\overset{\P}{\longrightarrow} 0,
\end{equation}
since in this case $\pi_{\a,\l}P^t$ is well approximated by $\pi_0$, and the the three claims in Theorem \ref{th:general} would follow from \eqref{eq:gen}.
As we shall see, the approximation \eqref{eq:wapprox} is rather straightforward in the first scenario. Indeed, if $\a\tent\to 0$ then the simple random walk has enough time to reach equilibrium between successive resampling events and \eqref{eq:wapprox} holds uniformly in $\l\in\cS_n$, see Proposition \ref{prop:ex1} below.
The second and third scenarios require a different approach since one cannot expect \eqref{eq:wapprox} to hold for all $\l\in\cS_n$. There is however a special class of distributions, that we refer to as {\em widespread}, which does satisfy \eqref{eq:wapprox}  in all three scenarios.

\begin{definition}[Widespread measure]\label{de:widespread}
	A sequence of probability measures $\lambda= \lambda_n$ on $[n]$ is 
	\emph{widespread} if 
	\begin{enumerate}
		\item[$\textbf{(i)}$]  
		There exists $\d>0$ such that  
		\begin{equation}\label{def:ws1}
		|\l|_\infty =\max_{x\in[n]}\lambda(x)=O(n^{-1/2-\d}).
		\end{equation}
		\item[$\textbf{(ii)}$] Bounded $\ell_2$-distance from the uniform distribution:
		\begin{equation}\label{def:ws2}
		\frac{1}{n}\sum_{j\in[n]}\(1-n\lambda(j) \)^2=O(1).
		\end{equation}
	\end{enumerate}
\end{definition}
Note that there is no requirement on the minimum of $\l(x)$, so that large portions of the set of vertices are allowed to receive zero mass. 
An important property of widespread measures is that, if we start with such a distribution $\l$, then the time needed to reach  stationarity for the simple random walk is much smaller than the entropic time $\tent$. More precisely we shall establish the following facts. 
 \begin{lemma}\label{le:approx}
Let $G$ be a random graph from either
the directed configuration model  DCM($\bd^\pm$) or the out-configuration model 
OCM($\bd^+$). If $\l=\l_n$ is widespread, then for any sequence $t=t(n)\to\infty$, 
\begin{equation}\label{eq:approx}
\left\|\l P^t - \pi_0\right\|_{\tv}\overset{\P}{\longrightarrow} 0.
		\end{equation}
		Moreover, in all three scenarios \eqref{eq:wapprox} holds for every widespread distribution $\l$.
\end{lemma}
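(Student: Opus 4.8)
The statement has two parts; the substantive one is the approximation \eqref{eq:approx} for $\l P^t$, and then \eqref{eq:wapprox} follows from it via the series expansion \eqref{eq:gen2}. Since total variation contracts under the stochastic matrix $P$, for which $\pi_0$ is stationary, $k\mapsto\|\l P^k-\pi_0\|_\tv$ is non-increasing; so, given $t=t(n)\to\infty$, it suffices to bound $\|\l P^{s}-\pi_0\|_\tv$ along $s=s(n):=\min\{t(n),\lfloor\log\log n\rfloor\}$, which still diverges. The gain is that $s=o(\log n)$: the depth-$s$ in-neighbourhood of any vertex $y$ then has poly-logarithmic size and is w.h.p.\ a tree, so I may use the ``first-visit'' description of the walk. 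Expanding along paths, $\l P^{s}(y)=\sum_{x}\l(x)\,w_{s}(x,y)$ with $w_{s}(x,y)=\sum_{x=v_0,\dots,v_s=y}\prod_{i=0}^{s-1}1/d^{+}_{v_i}$, and, $\pi_0$ being stationary, the same identity holds with $\l$ replaced by $\pi_0$, both sums carried by the depth-$s$ in-neighbourhood of $y$; subtracting,
\begin{equation*}
\l P^{s}(y)-\pi_0(y)=\sum_{x}\bigl(\l(x)-\pi_0(x)\bigr)\,w_{s}(x,y).
\end{equation*}
The structural input from \eqref{degs}/\eqref{degs+} is that each factor $1/d^{+}_{v_i}\le\tfrac12$, so $\max_x w_s(x,y)\le 2^{-s}$ while $W_s(y):=\sum_x w_s(x,y)$ is $O(1)$ w.h.p.\ (it is the depth-$s$ truncation of $n\pi_0(y)$, controlled by the recursive distributional equations of \cite{BCS1,BCS2}); hence $\sum_x w_s(x,y)^2\le 2^{-s}W_s(y)=O(2^{-s})$.

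The crux is a conditional second-moment estimate. Let $\mathcal F_s$ be the $\sigma$-algebra generated by the depth-$s$ in-neighbourhood exploration of $y$: it determines $w_s(\cdot,y)$ and the boundary vertices (hence $\l$ on them), but not the environment beyond. Conditionally on $\mathcal F_s$ the values $n\pi_0(x)$ at distinct boundary vertices $x$ are (approximately) independent, with $\mathrm{Var}(n\pi_0(x)\mid\mathcal F_s)=O(1)$ and $\mathbb E[n\pi_0(x)\mid\mathcal F_s]\approx n\,\muin(x)$ --- facts about the non-reversible stationary law $\pi_0$ furnished by \cite{BCS1,BCS2}. Hence $\mathrm{Var}(\l P^s(y)-\pi_0(y)\mid\mathcal F_s)=\sum_x w_s(x,y)^2\,\mathrm{Var}(\pi_0(x)\mid\mathcal F_s)=O(2^{-s}n^{-2})$, while $\mathbb E[\l P^s(y)-\pi_0(y)\mid\mathcal F_s]\approx\bigl((\l-\muin)P^{s}\bigr)(y)$. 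Now $\l-\muin$ is a \emph{signed} measure of total mass $0$ with $\sum_x(\l(x)-\muin(x))^2=O(1/n)$ --- this is where widespreadness of $\l$ (condition (ii), $\ell_2$-closeness to uniform) and bounded degrees (making $\muin$ $\ell_2$-close to uniform) are used --- and a configuration-model second-moment computation as in \cite{BCS1,BCS2} gives $\mathbb E[((\l-\muin)P^{s})(y)^2]=O(2^{-s}n^{-2})$: the would-be leading term cancels because $\l-\muin$ has zero mass, and every remaining term carries a factor $\le 2^{-s}$ from shared path weights. Combining, $\mathbb E[(\l P^s(y)-\pi_0(y))^2]=O(2^{-s}n^{-2})$ uniformly in $y$, so by Cauchy--Schwarz $\mathbb E\|\l P^s-\pi_0\|_\tv=\tfrac12\sum_y\mathbb E\,|\l P^s(y)-\pi_0(y)|=O(2^{-s/2})\to0$; Markov's inequality then gives $\|\l P^{t}-\pi_0\|_\tv\overset{\P}{\longrightarrow}0$, i.e.\ \eqref{eq:approx}. (Exceptional non-tree-like configurations have probability $O(\mathrm{polylog}\,n/n)$ and contribute negligibly; here the $\ell_\infty$ bound (i) on $\l$ is convenient for bounding $\l P^s(y)$ on such events.)

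For \eqref{eq:wapprox}, \eqref{eq:gen2} together with $\a\sum_{k\ge0}(1-\a)^k=1$ and $\pi_0 P^k=\pi_0$ gives $\pi_{\a,\l}-\pi_0=\a\sum_{k\ge0}(1-\a)^k(\l P^k-\pi_0)$, whence
\begin{equation*}
\|\pi_{\a,\l}-\pi_0\|_\tv\le\a\sum_{k<K}(1-\a)^k+\sup_{k\ge K}\|\l P^k-\pi_0\|_\tv\le \a K+\sup_{k\ge K}\|\l P^k-\pi_0\|_\tv .
\end{equation*}
The estimate above is uniform, $\mathbb E\|\l P^k-\pi_0\|_\tv\le C\,2^{-\min\{k,\lfloor\log\log n\rfloor\}/2}$ for all $k$, so taking $K=K(n)\to\infty$ with $\a K\to0$ --- possible since $\a\to0$, e.g.\ $K=\min\{\lceil\a^{-1/2}\rceil,\lfloor\log\log n\rfloor\}$ --- makes both terms vanish in expectation, hence in probability. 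This uses only $\a\to0$, so it covers all three scenarios of Theorem \ref{th:general}.

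The main obstacle is the conditional second-moment estimate, and within it the comparison with $\pi_0$: since $\pi_0$ is the non-explicit, random stationary distribution of a non-reversible walk, one must import from \cite{BCS1,BCS2} its recursive-distributional description --- bounded variance of the fluctuations of $n\pi_0$, asymptotic ``freshness'' of the deep in-environment with conditional mean $n\,\muin$, and $W_s(y)=O(1)$ --- and carry out the configuration-model bookkeeping (tree-likeness and depletion at depth $s$, and the behaviour of the fixed measures $\l$ and $\muin$ under the random relabelling of the exploration, through which the zero-mass cancellation and the surviving factor $2^{-s}$ have to be tracked) carefully enough. The monotonicity reduction and the series argument are routine by comparison.
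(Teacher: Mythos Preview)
Your overall architecture is sound --- monotonicity reduction to $s=o(\log n)$, a first-moment bound via Cauchy--Schwarz on a vertex-by-vertex second moment, and the series argument for \eqref{eq:wapprox} --- and the second part of the lemma is handled correctly. The gap is in the second-moment estimate itself, specifically in the way you invoke $\pi_0$.

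You write $\l P^s(y)-\pi_0(y)=\sum_x(\l(x)-\pi_0(x))w_s(x,y)$ and then claim that, conditionally on the depth-$s$ in-exploration $\cF_s$ of $y$, the values $n\pi_0(x)$ at distinct boundary vertices are approximately independent with $\mathrm{Var}(n\pi_0(x)\mid\cF_s)=O(1)$ and $\bbE[n\pi_0(x)\mid\cF_s]\approx n\muin(x)$, citing \cite{BCS1,BCS2}. These facts are \emph{not} established there in the form you need. What \cite{BCS1,BCS2} give is (a) the approximation $\|\muin P^h-\pi_0\|_\tv\to 0$ for $h=\Theta(\log n)$, and (b) a distributional description of $n\pi_0(\cI)$ for a \emph{single} uniformly random vertex $\cI$ via the martingale limit $M_\infty$. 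They do not provide conditional second moments of $\pi_0$ at specified vertices given partial graph information, and they certainly do not give the approximate conditional independence across the (potentially many) boundary vertices that your variance computation needs. Since $\pi_0$ is a global object, depending on the whole realisation of $G$ rather than on local in-neighbourhoods, this independence is far from automatic; and without it your variance bound degrades from $O(2^{-s}n^{-2})$ to $O(n^{-2})$, which after Cauchy--Schwarz and summation over $y$ gives only $O(1)$, not $o(1)$.

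The paper sidesteps this entirely. Rather than comparing $\l P^t$ directly with $\pi_0$, it uses (a) above as a black box (Proposition~\ref{prop:bcsproxy}) and reduces to showing $\|\l P^t-\muin P^h\|_\tv\to 0$ for $h=\e\tent$. This is a comparison between two \emph{explicit} distributions evolved under $P$, with no $\pi_0$ in sight. On the coupling event $B^-_{\cI,h}=\cT^-_h$ (which holds w.h.p.\ since $t,h$ are small), both $n\l P^t(\cI)$ and $n\muin P^h(\cI)$ become functionals $nX_t(\l)$ and $M_h$ of the Galton--Watson tree, and the required second moments $\bbE[(M_t-nX_t(\l))^2]\le\g(\l)\r^t$ and $\bbE[(M_t-M_h)^2]\le C\r^t$ are exact martingale computations (Propositions~\ref{prop:marconv} and~\ref{prop:mara}); widespreadness enters only through $\g(\l)=O(1)$, i.e.\ condition~(ii). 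The point is that the triangle-inequality pivot through $\muin P^h$ lets one trade the intractable global object $\pi_0$ for the tractable local proxy $M_h$.
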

The result in Lemma \ref{le:approx} illustrates well the mechanism behind the trichotomy in the case of widespread measures $\l$, but it is far from explaining the general phenomenon described in Theorem \ref{th:general}.
For instance, if $\l=\d_z$ is a Dirac mass at a vertex $z$, then $\l P^t=P^t(z,\cdot)$ and therefore \eqref{eq:approx} must fail for all $t=s\tent$, with $s\in(0,1)$ fixed, since by Theorem \ref{th:BCS} we know that in this case
\begin{equation}\label{eq:napprox}
\min_{z\in[n]}\left\| P^t(z,\cdot) - \pi_0\right\|_{\tv}\overset{\P}{\longrightarrow} 1.
		\end{equation}
Moreover, the stationary distribution $\pi_{\a,\d_z}$ can be very far from $\pi_0$ in both scenarios 2 and 3. In particular, using our analysis in Section \ref{sec:loc} one can check that in scenario 3,  
\begin{equation}\label{eq:napproxo}
\min_{z\in[n]}\left\| \pi_{\a,\d_z}-\pi_0\right\|_{\tv}\overset{\P}{\longrightarrow} 1.
		\end{equation}
While we believe the result in Lemma \ref{le:approx} to be of interest in its own, the proof of Theorem \ref{th:general} will be based on a different approach. 

The first observation is that the identity \eqref{eq:gen} together with the result of Theorem \ref{th:BCS} is already sufficient to establish all the upper bounds on the distance $\cD_{\a,\l}^x(t)$ required in the proof of Theorem \ref{th:general}, see Section \ref{sec:trichotomy} for the details. On the other hand, some extra work is needed for the proof of the lower bounds on $\cD_{\a,\l}^x(t)$. 
 A key technical point for establishing the desired lower bounds will be the following fact concerning  scenarios 2 and 3.
\begin{lemma}\label{le:singular}
Let $G$ be a random graph from either
the directed configuration model  DCM($\bd^\pm$) or the out-configuration model 
OCM($\bd^+$). 
For fixed $\g>0$, including $\g=\infty$, and $s\in(0,\g)$, for any sequence $\a\to 0$, satisfying $\a\tent\to \g$, and $t=s/\a$:  
\begin{equation}\label{eq:singular}
\min_{\l\in\cS_n}\min_{x\in [n]} \left\| P^t(x,\cdot) - \pi_{\a,\l}P^t\right\|_{\tv}\overset{\P}{\longrightarrow} 1\,.
\end{equation}
\end{lemma}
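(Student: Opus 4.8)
The plan is to produce, for each starting vertex $x$, a set $A_x\subseteq[n]$ that carries almost all the mass of $P^t(x,\cdot)$ but almost none of $\pi_{\alpha,\lambda}P^t$, simultaneously for every resampling distribution $\lambda$. Since $\|\mu-\nu\|_\tv\ge\mu(A)-\nu(A)$ for every set $A$, the lemma will follow once we exhibit sets $A_x$, $x\in[n]$, such that w.h.p.\ $\max_x P^t(x,A_x^{c})\to0$ and $\sup_{\lambda\in\cS_n}\max_x(\pi_{\alpha,\lambda}P^t)(A_x)\to0$. For the first property we take $A_x$ to be (a slightly pruned version of) the typical endpoint set of the length-$t$ simple random walk from $x$ constructed in \cite{BCS1,BCS2} in the proof of the cutoff lower bound (Theorem \ref{th:BCS} with $0<s<1$): since $t/\tent\to\sigma:=s/\gamma<1$, w.h.p.\ $\max_x P^t(x,A_x^{c})\to0$ and $\max_x|A_x|\le n^{\sigma+o(1)}$. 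Let $g_0=g_0(n)$ be chosen with $g_0\to\infty$, $g_0=o(\log n/\D)$ and $\alpha g_0^2\to\infty$ — possible since $\alpha=\Omega(1/\log n)$ here and $\D=O(1)$. Using the spread estimate $\|P^t(x,\cdot)\|_\infty\le n^{-\sigma+o(1)}$ from the same analysis (when $\sigma=0$ one instead invokes local tree-likeness, as $t\ll \tfrac{\log n}{2\log \D}$), one may delete from $A_x$ all vertices lying on a directed cycle of length $\le g_0$ — there are only $n^{o(1)}$ such vertices, and removing them does not spoil $\max_x P^t(x,A_x^{c})\to0$. We may therefore assume property \textbf{(a)}: \emph{no vertex of $A_x$ lies on a directed cycle of length $\le g_0$}.

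For the second property, reindexing the expansion \eqref{eq:gen2} gives $\pi_{\alpha,\lambda}P^t\le(1-\alpha)^{-t}\pi_{\alpha,\lambda}$ entrywise, and $(1-\alpha)^{-t}\to e^{s}<\infty$; hence it suffices to bound $\sup_\lambda\pi_{\alpha,\lambda}(A_x)$. Since $\lambda\mapsto\pi_{\alpha,\lambda}(A_x)$ is linear, the supremum is attained at a Dirac mass, and by \eqref{eq:gen2},
\[
\sup_{\lambda\in\cS_n}\pi_{\alpha,\lambda}(A_x)=\max_{y\in[n]}\pi_{\alpha,\delta_y}(A_x)=\alpha\max_{y\in[n]}\sum_{w\in A_x}G_\alpha(y,w),\qquad G_\alpha(y,w):=\sum_{k\ge0}(1-\alpha)^k P^k(y,w).
\]
Here $G_\alpha(y,w)$ is the expected number of visits to $w$ by the simple random walk from $y$ before an independent $\mathrm{Geom}(\alpha)$ killing time $\zeta$ (with $\E\zeta=1/\alpha$); by the first-passage decomposition $G_\alpha(y,w)=h_\alpha(y,w)\,L_\alpha(w)$, where $h_\alpha(y,w)=\P_y(\text{$w$ is visited before }\zeta)$ and $L_\alpha(w)=(1-r_\alpha(w))^{-1}$ with $r_\alpha(w)=\P_w(\text{$w$ is revisited before }\zeta)$. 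Property \textbf{(a)} forces a return to $w\in A_x$ to take more than $g_0$ steps, so $r_\alpha(w)\le(1-\alpha)^{g_0}$ and hence $L_\alpha(w)\le 2/\min(1,\alpha g_0)$. Consequently
\[
\alpha\sum_{w\in A_x}G_\alpha(y,w)\le\frac{2\alpha}{\min(1,\alpha g_0)}\sum_{w\in A_x}h_\alpha(y,w)=\frac{2\alpha}{\min(1,\alpha g_0)}\,\E_y\bigl[\#\{w\in A_x:\text{$w$ visited before }\zeta\}\bigr].
\]

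It remains to show that the expected number of distinct vertices of $A_x$ visited before $\zeta$ is $O\bigl(1+(\alpha g_0)^{-1}\bigr)$, uniformly in $x$ and $y$. This follows from an excursion decomposition together with property \textbf{(b)}: \emph{w.h.p., for every $x$ and every $w\in A_x$, $\sum_{j=1}^{g_0}P^j(w,A_x)\le\tfrac12$}. Group the visit times of the killed walk to $A_x$ into maximal clusters separated by time-gaps larger than $g_0$; since the walk runs for $\zeta$ steps, there are at most $1+\zeta/g_0$ clusters, hence at most $1+(\alpha g_0)^{-1}$ in expectation. By \textbf{(b)}, from every visited vertex of $A_x$ the probability of visiting $A_x$ again within the next $g_0$ steps is at most $\tfrac12$, so each cluster contains a number of visits stochastically dominated by a $\mathrm{Geom}(\tfrac12)$ variable; a Wald-type bound then gives $\E_y[\#\text{visits to }A_x\text{ before }\zeta]\le 2\bigl(1+(\alpha g_0)^{-1}\bigr)$, and the number of distinct such vertices is no larger. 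Substituting into the last display yields $\alpha\sum_{w\in A_x}G_\alpha(y,w)\le(4\alpha+4/g_0)/\min(1,\alpha g_0)$, which tends to $0$ since $g_0\to\infty$ and $\alpha g_0^2\to\infty$. Combined with the first two paragraphs, this proves the lemma conditionally on \textbf{(a)} and \textbf{(b)}.

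The technical heart is establishing \textbf{(a)} and, above all, \textbf{(b)} w.h.p.\ uniformly in $x$. Estimate \textbf{(a)} is a routine first-moment bound: the number of vertices on directed cycles of length $\le g_0=o(\log n/\log\D)$ is $n^{o(1)}$ w.h.p., and none of them survives the pruning of $A_x$. Estimate \textbf{(b)} is the delicate one. When $\gamma=\infty$ (so $\sigma=0$) it is essentially immediate: the depth-$(t+g_0)$ out-neighbourhood of $x$ is w.h.p.\ a tree, and a vertex of $A_x$ reached from $w\in A_x$ in $\ell\le g_0$ steps would lie at depth $t+\ell\ne t$ from $x$, a contradiction — so the only vertex of $A_x$ reachable from $w$ within $g_0$ steps is $w$ itself, which is excluded by \textbf{(a)}. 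When $\gamma\in(0,\infty)$ the neighbourhood of $x$ need no longer be tree-like, and one must rule out, for $w,w'\in A_x$, the coexistence of an $x$-to-$w'$ path of length $t$ and a $w$-to-$w'$ path of length $\ell\le g_0$ — that is, two nearly coincident directed paths from $x$ to $w'$ of lengths $t$ and $t+\ell$. This is precisely the kind of path-collision event controlled, by first- and second-moment arguments on the (directed) configuration model, in the proof of Theorem \ref{th:BCS} in \cite{BCS1,BCS2} and in Section \ref{sec:loc}; in fact one obtains the stronger statement $\sup_x\sup_{w\in A_x}\sum_{j=1}^{g_0}P^j(w,A_x)\to0$ w.h.p. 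Carrying out this last step — controlling the interaction of the walk's trajectory with the localized set $A_x$ beyond the tree-like window — is where the bulk of the work lies.
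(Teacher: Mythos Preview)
Your strategy departs from the paper's in an interesting way. The paper first writes $\pi_{\alpha,\lambda}P^t$ as a mixture $A\mu_\lambda+(1-A)\pi_0$ (Lemmas~\ref{le:decomp0}--\ref{le:decomp}), handles the $\pi_0$-part by Theorem~\ref{th:BCS}, and controls $\mu_\lambda(\cA_x(t))$ via the structural Lemma~\ref{lemma:new}: with high probability every path of length $u\le(1-\eta)\tent$ from any $z$ meets $\cA_x(t)$ at most $K$ times, whence $\mu_\lambda(\cA_x(t))\le\alpha K+o(1)$. You instead use the neat entrywise bound $\pi_{\alpha,\lambda}P^t\le(1-\alpha)^{-t}\pi_{\alpha,\lambda}$ (which really does follow from \eqref{eq:gen2}) and reduce to controlling the killed Green function $\alpha\, G_\alpha(y,A_x)$. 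This avoids the mixture decomposition and is appealingly direct; the cost is that the walk now runs for a $\mathrm{Geom}(\alpha)$ horizon, possibly far beyond $\tent$, rather than being truncated at $(1-\eta)\tent$.

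There is, however, a genuine gap in the cluster step. Property~\textbf{(b)} asks that $\sum_{j\le g_0}P^j(w,A_x)\le\tfrac12$ (even $\to0$) simultaneously for \emph{every} $x$ and \emph{every} $w\in A_x$, and you defer to ``path-collision'' arguments. But those arguments (Lemma~\ref{lemma:new} and its proof) work by taking a $K$-th power of the collision probability $p\asymp n^{-\eta^2/2}$ precisely to beat the $n^2$ union bound over $(x,z)$; they yield only ``every length-$g_0$ path from any $z$ meets $A_x$ at most $K$ times'' for a \emph{fixed large} constant $K$. That gives $\sum_{j\le g_0}P^j(w,A_x)\le K$, not $\le\tfrac12$, and from $\le K$ one cannot salvage the $\mathrm{Geom}(\tfrac12)$ domination your cluster argument needs --- a single bad matching at a child of $w$ (an event you cannot rule out uniformly in $w,x$) already sends a $1/\Delta$ fraction of paths back into $\cT_x(t)$. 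The repair is to drop the $h_\alpha L_\alpha$ decomposition and property~\textbf{(a)} and run a cruder block argument: chop $[0,\zeta)$ into blocks of length $g_0$, use the per-path bound $\le K$ (combined with the $V_*$/tree-containment device of \eqref{eq:tauze2}--\eqref{eq:efx2}) to get at most $K+o(1)$ visits to $A_x$ per block in expectation, and conclude $\alpha\,G_\alpha(y,A_x)\le K\alpha\bigl(1-(1-\alpha)^{g_0}\bigr)^{-1}+o(1)\to0$. With that modification the argument works, but at that point it is essentially a repackaging of the proof of Lemma~\ref{le:key}: in both versions Lemma~\ref{lemma:new} is the load-bearing input.
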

Essentially, \eqref{eq:singular} says that the $t$-step evolution of the random walk starting at any given vertex $x$ is singular with respect to the evolution starting at the page rank distribution, as soon as $t\leq (1-\e)T_\ent$ for some fixed $\e>0$. The uniformity in $x\in [n]$ and $\l\in\cS_n$ in Lemma \ref{le:singular} is a delicate matter. We shall see that for general $\l\in\cS_n$, if $\a \tent \to \g>0$, then $\pi_{\a,\l}P^t$ is a nontrivial mixture of $\pi_0$ and another measure $\mu_\l$, see Lemma \ref{le:decomp} below for the precise version of this statement. 
Depending on the nature of $\l\in\cS_n$, the measure $\mu_\l$ can be either supported on a small subset of $[n]$, e.g.\ if $\l=\d_z$ for some $z$, or very spread out, e.g.\ if $\l$ is widespread as in Definition \ref{de:widespread}. We shall however show that structural features of the random random graph $G$ and the fact that $\a\to0$  imply that the measure $\mu_\l$ cannot concentrate any mass on the support of the distribution $P^t(x,\cdot)$ and thus $\mu_\l$ and $P^t(x,\cdot)$ are approximately singular.  We refer to Section \ref{le:singular} for the derivation of this anti-concentration phenomenon. 
Since $\pi_0$ and $P^t(x,\cdot)$ are approximately singular for $t \leq (1-\e)T_\ent$ as in \eqref{eq:napprox}, this will be sufficient to prove Lemma \ref{le:singular}.

%

The rest of the paper is arranged as follows: the next section establishes the basic identities \eqref{eq:gen} and \eqref{eq:gen2} and some more preliminary material; Section \ref{sec:loc} contains our main technical estimates and the proof of Lemma \ref{le:singular}; Section \ref{sec:trichotomy} shows how to derive the main results from Lemma \ref{le:singular} and the facts established in Section \ref{sec:prelim}. The discussion of widespread measures and the proof  of Lemma \ref{le:approx} form an independent piece of work and are given in Section \ref{sec:martingales}.
 
 \section{Preliminaries}\label{sec:prelim}
Here we collect some simple general facts about the PageRank surf. The statements in this section do not depend on the graph $G$ where the original walk takes place.   Therefore, we fix an arbitrary  digraph $G$ with vertex set $V=[n]$, and let $P$ be the transition matrix in \eqref{de:pstar2}. If $d^+_x=0$ for some $x$ we may define $P(x,x)=1$ and $P(x,y)=0$ for all $y\in V\setminus\{x\}$.   
\subsection{The stationary distribution $\pi_{\a,\l}$}

\begin{proposition}\label{prop:expa}
For any $\a\in(0,1)$, any probability vector $\l$, let $P_{\a,\l}$ be defined by \eqref{de:pstarla}. There exists a unique probability vector $\pi_{\a,\l}$ satisfying $\pi_{\a,\l}P_{\a,\l}=\pi_{\a,\l}$. Moreover, $\pi_{\a,\l}$ is given by 
\begin{equation}\label{eq:expa}
\pi_{\a,\l}=\a\sum_{k=0}^\infty(1-\a)^k\l P^k	.	
\end{equation}
\end{proposition}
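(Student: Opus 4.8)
The plan is to verify directly that the right-hand side of \eqref{eq:expa} is a well-defined probability vector, that it is stationary for $P_{\a,\l}$, and then invoke Doeblin's condition for uniqueness. First I would observe that since $\l P^k$ is a probability vector for every $k\ge0$ (as $P$ is stochastic — recall the convention $P(x,x)=1$ when $d_x^+=0$, so $P$ is genuinely stochastic), and since $\a\sum_{k=0}^\infty(1-\a)^k=1$, the series $\a\sum_{k\ge0}(1-\a)^k\l P^k$ converges absolutely in $\ell^1$ and defines a probability vector; call it $\tilde\pi$. Next I would check stationarity by a direct computation: using $P_{\a,\l}=(1-\a)P+\a\,\mathbf 1\l$ (where $\mathbf 1\l$ denotes the rank-one stochastic matrix with all rows equal to $\l$, so that $\nu(\mathbf 1\l)=\l$ for any probability vector $\nu$), one gets
\begin{equation}
\tilde\pi P_{\a,\l}=(1-\a)\tilde\pi P+\a\l=(1-\a)\a\sum_{k=0}^\infty(1-\a)^k\l P^{k+1}+\a\l=\a\sum_{j=1}^\infty(1-\a)^j\l P^{j}+\a\l=\tilde\pi,
\end{equation}
by re-indexing $j=k+1$ and absorbing the $\a\l$ term as the $j=0$ summand.

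For existence and uniqueness of the stationary vector, I would note that $P_{\a,\l}(x,y)\ge\a\l(y)$ for all $x,y$, i.e.\ $P_{\a,\l}$ satisfies the Doeblin minorization condition with the probability measure $\l$ and constant $\a>0$; hence $P_{\a,\l}$ is a contraction in total variation with coefficient $1-\a$, so it has a unique stationary distribution (this is the standard Doeblin argument, already alluded to in the text before the statement). Combining this with the stationarity of $\tilde\pi$ shown above gives $\pi_{\a,\l}=\tilde\pi$, which is \eqref{eq:expa}.

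The argument is essentially routine; the only point requiring a little care is the justification of the term-by-term manipulation of the infinite series (convergence in $\ell^1$ and the re-indexing), but since all terms are nonnegative and the total mass is summable this is immediate. If one prefers to avoid invoking Doeblin abstractly, an alternative is to note that $\tilde\pi=\a\l(I-(1-\a)P)^{-1}$, the Neumann series being valid because $\|(1-\a)P\|_{\infty\to\infty}=1-\a<1$ as an operator on row vectors (equivalently on $\ell^1$), which simultaneously gives existence, the closed form, and — since $I-(1-\a)P$ is invertible — uniqueness of the solution of $\pi(I-(1-\a)P)=\a\l$, hence of $\pi P_{\a,\l}=\pi$.
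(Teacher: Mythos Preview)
Your proof is correct. Your primary route --- define the series, verify directly that it is a probability vector and that it is stationary, then invoke Doeblin's minorization for uniqueness --- differs from the paper's, which instead rewrites $\pi P_{\a,\l}=\pi$ as $\pi(\mathbf 1-(1-\a)P)=\a\l$, observes that $\mathbf 1-(1-\a)P$ is strictly diagonally dominant hence invertible (giving existence and uniqueness at once), and then expands the inverse as a Neumann series to obtain \eqref{eq:expa}. Your alternative final paragraph is essentially the paper's argument, with the operator-norm bound $\|(1-\a)P\|=1-\a$ playing the role of diagonal dominance; so you have in fact covered both approaches, and either one is entirely adequate here.
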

\begin{proof}
The equation $\pi_{\a,\l}P_{\a,\l}=\pi_{\a,\l}$ is equivalent to 
$$
\pi_{\a,\l}({\bf 1}-(1-\a)P)=\a\l.
$$
Since $P$ is a stochastic matrix, the matrix ${\bf 1}-(1-\a)P$ is strictly diagonally dominant, and therefore invertible. Then  \eqref{eq:expa} follows by expanding the expression $ \pi_{\a,\l}=\a\l({\bf 1}-(1-\a)P)^{-1}$.
\end{proof}
In particular, \eqref{eq:expa} and the triangle inequality imply
that for any other probability vector $\mu$:
\begin{equation}\label{eq:expa2}
\|\pi_{\a,\l}-\mu\|_\tv\leq\a\sum_{k=0}^\infty(1-\a)^k\|\l P^k-\mu\|_\tv	.	
\end{equation}

 \subsection{Walk vs. teleport}
 A trajectory of the PageRank surf can be sampled as follows. At each time unit independently, we flip a $\a$-biased coin: if heads (with probability $\a$) then the surfer is teleported to a new vertex, chosen according to $\lambda$; if tails (with probability $1-\a$) then the surfer walks one step according to the transition matrix $P$. The probability associated to this construction will be denoted by $\bbP$. If $\t_\a$ denotes the first time the surfer is teleported, then for all $t\in\bbN$: 
 	\begin{equation}\label{eq:easyub1}
\P(\t_\a>t)=(1-\a)^t.
	\end{equation}
	 \begin{proposition}\label{prop:tv}
 For any $\a\in(0,1)$, any probability vector $\l$, and all $t\in\N$, $x\in[n]$:
 	\begin{equation}\label{eq:ube1}
\|P_{\a,\l}^t(x,\cdot)-\pi_{\a,\l}\|_\tv=(1-\a)^t\|P^t(x,\cdot)-\pi_{\a,\l}P^t\|_\tv.
	\end{equation}
\end{proposition}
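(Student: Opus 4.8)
The plan is to establish the identity \eqref{eq:ube1} by a direct coupling between the PageRank surf and the simple random walk, exploiting the ``walk vs.\ teleport'' description introduced just above the statement. The key structural fact is that, conditioned on the first teleportation time $\t_\a$, the PageRank surf behaves in a very clean way: on the event $\{\t_\a > t\}$ it has taken exactly $t$ simple-random-walk steps from $x$, while on the event $\{\t_\a \le t\}$ the most recent teleportation ``forgets'' the initial condition $x$ and replaces it by a fresh draw from $\l$ followed by some simple-random-walk steps. First I would write, for any vertex $x$ and any event $A\subseteq[n]$,
\begin{equation*}
P_{\a,\l}^t(x,A) = \P(\t_\a > t)\,P^t(x,A) + \P(\t_\a \le t)\,\nu_t(A),
\end{equation*}
where $\nu_t$ is the law of the surfer at time $t$ conditioned on having teleported at least once by time $t$; crucially $\nu_t$ does not depend on $x$, since after the first teleportation the walk starts afresh. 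Applying the same decomposition to the stationary chain (started from $\pi_{\a,\l}$, which is invariant) gives $\pi_{\a,\l}(A) = \P(\t_\a > t)\,\pi_{\a,\l}P^t(A) + \P(\t_\a \le t)\,\nu_t(A)$, because running the stationary surf for $t$ steps and conditioning on $\{\t_\a>t\}$ is exactly $\pi_{\a,\l}P^t$, while conditioning on $\{\t_\a\le t\}$ again yields the same $x$-free law $\nu_t$.

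Subtracting the two displays, the $\nu_t$ terms cancel exactly, leaving
\begin{equation*}
P_{\a,\l}^t(x,A) - \pi_{\a,\l}(A) = \P(\t_\a > t)\,\big(P^t(x,A) - \pi_{\a,\l}P^t(A)\big).
\end{equation*}
Taking the supremum over events $A$ and using $\P(\t_\a>t) = (1-\a)^t$ from \eqref{eq:easyub1} then yields \eqref{eq:ube1}. To make the cancellation rigorous I would set up the coupling explicitly: sample one sequence of $\a$-biased coins and one sequence of $P$-steps and one sequence of $\l$-teleport targets, use them to drive simultaneously the surf from $x$ and the surf from a $\pi_{\a,\l}$-distributed start, and observe that on $\{\t_\a \le t\}$ the two coupled surfers occupy the same vertex at time $t$ (they have coalesced at the last teleportation before time $t$). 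Hence the two time-$t$ laws differ only through the $\{\t_\a>t\}$ part of the probability space, which is precisely the content of the previous display.

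The only genuinely delicate point — and the step I would be most careful about — is verifying that $\nu_t$, the post-teleportation law at time $t$, is genuinely independent of the starting vertex $x$ (equivalently, that the coupled surfers truly coalesce on $\{\t_\a\le t\}$). This is intuitively obvious from the teleport mechanism but deserves a clean argument: conditionally on $\{\t_\a = r\}$ for $1\le r\le t$, the surfer's position at time $t$ is distributed as $\l P^{\,t-r}$, with no residual dependence on $x$, and summing over $r$ with the geometric weights $\P(\t_\a = r \mid \t_\a \le t)$ gives a law that is manifestly $x$-free; the identical computation with $\pi_{\a,\l}$ in place of $\d_x$ gives the same mixture. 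Everything else is bookkeeping. As a consistency check one can note that letting $t\to\infty$ recovers $\|\,\cdot\,\|_\tv \to 0$ since $(1-\a)^t\to0$ and the total variation distance on the right is at most $1$, which is the expected ergodicity of the Doeblin chain.
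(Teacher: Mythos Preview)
Your proposal is correct and follows essentially the same coupling idea as the paper's proof: decompose according to whether $\t_\a>t$, observe that on $\{\t_\a\le t\}$ the time-$t$ law is independent of the starting point, and cancel. One small slip in your justification paragraph: conditionally on $\{\t_\a=r\}$ the surfer continues with the full PageRank transition $P_{\a,\l}$, so the position at time $t$ has law $\l P_{\a,\l}^{\,t-r}$, not $\l P^{\,t-r}$; this does not affect your argument, since either expression is $x$-free and the $\nu_t$ terms still cancel.
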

\begin{proof}
We use the construction introduced above, and write $X^x_t$ for the position of the surfer at time $t$ with initial vertex $x$. By using the same sample of the teleporting distribution $\l$ we couple two trajectories $X^x_t,X^z_t$ in such a way that $ X^x_t=X^z_t$, for all $t\geq \t_\a$.
Therefore, letting $\bbE$ denote the expectation with respect to this coupling:
\begin{align}
P_{\a,\l}^t(x,y)-P_{\a,\l}^t(z,y)&=\bbE\left[\ind(X^x_t=y)-\ind(X^z_t=y)
\right]\nonumber\\
& =
\bbE\left[\ind(X^x_t=y)-\ind(X^z_t=y)
; \t_\a> t\right].
	\end{align}
Moreover,
	\begin{align}\label{eq:la=pi3}
\bbE\left[\ind(X^x_t=y)
; \t_\a> t\right]=\bbP(\t_\a> t)\bbP(X_t=y|X_0=x,\,\t_\a> t) = \bbP(\t_\a> t)P^t(x,y).
	\end{align}
Therefore,
	\begin{align}\label{eq:la=pi4}
P_{\a,\l}^t(x,y)-P_{\a,\l}^t(z,y)&=
\bbP(\t_\a> t)(P^t(x,y)-P^t(z,y)).
	\end{align}
	Multiplying by $\pi_{\a,\l}(z)$, summing over $z$, and using \eqref{eq:easyub1} one obtains
	 	\begin{align}\label{eq:la=pi5}
P_{\a,\l}^t(x,y)-\pi_{\a,\l}(y)&=
(1-\a)^t\left(P^t(x,y)-[\pi_{\a,\l}P^t](y)\right).
	\end{align}
It follows that
	 	\begin{align}\label{eq:la=pi6}
\|P_{\a,\l}^t(x,\cdot)-\pi_{\a,\l}\|_\tv&=\frac12\sum_{y\in V}|P_{\a,\l}^t(x,y)-\pi_{\a,\l}(y)
|\nonumber\\
&=(1-\a)^t\frac12\sum_{y\in V}\left|P^t(x,y)-[\pi_{\a,\l}P^t](y)\right|\nonumber\\
&=(1-\a)^t\|P^t(x,\cdot)-\pi_{\a,\l}P^t\|_\tv.
	\end{align}
\end{proof}
Since the total variation distance is always bounded above by $1$,  Proposition \ref{prop:tv} implies the upper bound 
	\begin{align}\label{eq:ubo}
\cD^x_{\a,\l}(t)=\|P_{\a,\l}^t(x,\cdot)-\pi_{\a,\l}\|_\tv\leq (1-\a)^t.
	\end{align}
The latter, in turn, gives the 
following upper bound on the mixing time. 
 \begin{corollary}\label{cor:easyub}
 For any $\a\in(0,1)$, any probability vector $\l$, 
 and all $\e\in(0,1)$, the $\e$-mixing time \eqref{tmixe} satisfies
		\begin{equation}\label{eq:easyub}
T_{\a,\l}(\e)\leq \frac1\a\log(1/\e).	\end{equation}
\end{corollary}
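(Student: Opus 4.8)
The statement to prove is Corollary \ref{cor:easyub}: for any $\a\in(0,1)$, any probability vector $\l$, and all $\e\in(0,1)$, the $\e$-mixing time satisfies $T_{\a,\l}(\e)\le \frac1\a\log(1/\e)$.

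The plan is to read off this bound directly from the already-established inequality \eqref{eq:ubo}, namely $\cD^x_{\a,\l}(t)\le(1-\a)^t$, which holds uniformly in $x\in[n]$. First I would observe that since $(1-\a)^t$ does not depend on $x$, the quantity $\max_{x\in V}\cD^x_{\a,\l}(t)$ is also bounded by $(1-\a)^t$. Then it suffices to find the smallest $t$ for which $(1-\a)^t\le\e$, and by the definition \eqref{tmixe} of $T_{\a,\l}(\e)$ as an infimum, this value of $t$ is an upper bound for the mixing time.

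Concretely, the condition $(1-\a)^t\le\e$ is equivalent to $t\log(1-\a)\le\log\e$, i.e.\ $t\ge \log(1/\e)/\log(1/(1-\a))$, using that $\log(1-\a)<0$. Since $\log(1/(1-\a))=-\log(1-\a)\ge\a$ for $\a\in(0,1)$ (by the elementary inequality $\log(1-x)\le -x$), we get $\log(1/\e)/\log(1/(1-\a))\le \frac1\a\log(1/\e)$. Taking $t=\lceil \frac1\a\log(1/\e)\rceil$ — or simply noting that the monotone extension of $\cD^x_{\a,\l}$ to $[0,\infty)$ makes $t=\frac1\a\log(1/\e)$ itself admissible after the same estimate — one concludes $\max_{x\in V}\cD^x_{\a,\l}(t)\le\e$, hence $T_{\a,\l}(\e)\le\frac1\a\log(1/\e)$.

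There is no real obstacle here; the only point requiring a line of care is the elementary inequality $-\log(1-\a)\ge\a$, which converts the exact threshold into the clean bound $\frac1\a\log(1/\e)$, together with the harmless remark about the monotone extension of $\cD^x_{\a,\l}$ to real times so that no ceiling is needed in the final statement. The entire argument is a two-line consequence of Proposition \ref{prop:tv} via \eqref{eq:ubo}.

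\begin{proof}
By \eqref{eq:ubo}, for every $x\in V$ and every $t$ one has $\cD^x_{\a,\l}(t)\le(1-\a)^t$, and the right-hand side is independent of $x$, so $\max_{x\in V}\cD^x_{\a,\l}(t)\le(1-\a)^t$. Using the elementary inequality $\log(1-\a)\le-\a$ valid for $\a\in(0,1)$, for $t=\frac1\a\log(1/\e)$ we obtain
\begin{equation}
(1-\a)^t=\exp\!\big(t\log(1-\a)\big)\le\exp(-\a t)=\exp(-\log(1/\e))=\e.
\end{equation}
Hence $\max_{x\in V}\cD^x_{\a,\l}\big(\tfrac1\a\log(1/\e)\big)\le\e$, and by the definition \eqref{tmixe} of the $\e$-mixing time as an infimum (with $\cD^x_{\a,\l}$ extended monotonically to $t\in[0,\infty)$), this yields $T_{\a,\l}(\e)\le\frac1\a\log(1/\e)$.
\end{proof}
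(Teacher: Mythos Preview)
Your proposal is correct and follows exactly the approach of the paper: the corollary is stated there as an immediate consequence of \eqref{eq:ubo}, with no further proof given. Your write-up simply supplies the two-line computation (the inequality $\log(1-\a)\le -\a$ and the resulting bound on $t$) that the paper leaves implicit.
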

A further immediate consequence of Proposition \ref{prop:tv}  is that 
if $\l$ is stationary for $P$, then the distance to equilibrium $\cD^x_{\a,\l}(t)$ takes a simple form. 
\begin{corollary}\label{cor:la=pi}
 For any $\a\in(0,1)$, for all $x\in V$ and all $t\in\N$, 
 if $\pi_0$ is a probability vector such that $\pi_0P=\pi_0$, then taking $\l=\pi_0$,
 	\begin{equation}\label{eq:la=pi1}
\cD^x_{\a,\pi_0}(t)=(1-\a)^t\,\|P^t(x,\cdot)-\pi_0\|_\tv.
	\end{equation}
\end{corollary}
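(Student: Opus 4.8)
The plan is to read off this identity directly from Proposition~\ref{prop:tv}, once we have pinned down the stationary distribution $\pi_{\a,\pi_0}$. The only input beyond Proposition~\ref{prop:tv} is the observation that a resampling distribution which is already stationary for the simple random walk is left untouched by the PageRank modification.

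First I would compute $\pi_{\a,\pi_0}$ using the power series expansion \eqref{eq:expa} of Proposition~\ref{prop:expa}. Since $\pi_0P=\pi_0$, iterating gives $\pi_0P^k=\pi_0$ for every $k\ge 0$, so
$$
\pi_{\a,\pi_0}=\a\sum_{k=0}^\infty(1-\a)^k\,\pi_0P^k=\Big(\a\sum_{k=0}^\infty(1-\a)^k\Big)\pi_0=\pi_0,
$$
where the geometric series sums to $1/\a$ because $\a\in(0,1)$ (absolute convergence being the same point already used in the proof of Proposition~\ref{prop:expa}).

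Next I would invoke the identity \eqref{eq:ube1} with $\l=\pi_0$, which reads
$$
\cD^x_{\a,\pi_0}(t)=\|P_{\a,\pi_0}^t(x,\cdot)-\pi_{\a,\pi_0}\|_\tv=(1-\a)^t\,\|P^t(x,\cdot)-\pi_{\a,\pi_0}P^t\|_\tv.
$$
Substituting $\pi_{\a,\pi_0}=\pi_0$ from the first step and using once more $\pi_0P^t=\pi_0$ (again by iterating $\pi_0P=\pi_0$), the right-hand side becomes $(1-\a)^t\,\|P^t(x,\cdot)-\pi_0\|_\tv$, which is exactly the claimed identity \eqref{eq:la=pi1}.

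I do not anticipate any real obstacle here: the corollary is a direct specialization of Proposition~\ref{prop:tv} combined with the fixed-point property $\pi_{\a,\pi_0}=\pi_0$, and the only step requiring (already available) justification is the absolute convergence of the series in \eqref{eq:expa}, which is ensured by $\a\in(0,1)$.
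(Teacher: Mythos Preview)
Your proposal is correct and follows exactly the same route as the paper: use Proposition~\ref{prop:expa} to conclude $\pi_{\a,\pi_0}=\pi_0$, then substitute into the identity of Proposition~\ref{prop:tv}. The paper's own proof is in fact a one-line version of what you wrote.
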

\begin{proof}
From Proposition \ref{prop:expa} it follows that $\pi_{\a,\pi_0}=\pi_0$, and therefore $\pi_{\a,\pi_0}P^t=\pi_0$ for all $t$. \end{proof}

Finally, another useful consequence of Proposition \ref{prop:tv} is that it allows us to control the distance $\cD^x_{\a,\l}(t)$ in terms of the distance $\cD^x_{\a,\pi_0}(t)$, for some stationary $\pi_0$ as in Corollary \ref{cor:la=pi}, by means of the distance between $\pi_{\a,\l}$ and $\pi_0$.

\begin{corollary}\label{cor:comp}
 For any $\a\in(0,1)$, all $t\in\N$, any probability vector $\l$,  
 if $\pi_0$ is such that $\pi_0P=\pi_0$,
 	\begin{equation}\label{eq:comp1}
\max_{x\in V}\left|\cD^x_{\a,\l}(t)-\cD^x_{\a,\pi_0}(t)\right|\leq
\|\pi_{\a,\l}-\pi_0\|_\tv.
	\end{equation}
\end{corollary}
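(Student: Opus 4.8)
The plan is to derive \eqref{eq:comp1} directly from the representation in Proposition~\ref{prop:tv} by a triangle-inequality argument that isolates the dependence on $\l$ into a single comparison between $\pi_{\a,\l}P^t$ and $\pi_0$. First I would apply Proposition~\ref{prop:tv} twice: once with the resampling distribution $\l$, giving $\cD^x_{\a,\l}(t)=(1-\a)^t\|P^t(x,\cdot)-\pi_{\a,\l}P^t\|_\tv$, and once with the resampling distribution $\pi_0$, which by Corollary~\ref{cor:la=pi} gives $\cD^x_{\a,\pi_0}(t)=(1-\a)^t\|P^t(x,\cdot)-\pi_0\|_\tv$ (using $\pi_{\a,\pi_0}=\pi_0$ and $\pi_0 P^t=\pi_0$).

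Next I would subtract these two identities and use the reverse triangle inequality for the total variation norm: for any fixed $x$,
\begin{equation*}
\left|\cD^x_{\a,\l}(t)-\cD^x_{\a,\pi_0}(t)\right|
=(1-\a)^t\left|\,\|P^t(x,\cdot)-\pi_{\a,\l}P^t\|_\tv-\|P^t(x,\cdot)-\pi_0\|_\tv\,\right|
\le (1-\a)^t\,\|\pi_{\a,\l}P^t-\pi_0\|_\tv.
\end{equation*}
Then I would bound $(1-\a)^t\le 1$ and observe that $\|\pi_{\a,\l}P^t-\pi_0\|_\tv=\|\pi_{\a,\l}P^t-\pi_0 P^t\|_\tv\le\|\pi_{\a,\l}-\pi_0\|_\tv$, since applying the stochastic matrix $P^t$ is a contraction in total variation (it maps the difference of two probability vectors to a vector whose $\ell_1$ norm does not increase, equivalently $\|\mu P-\nu P\|_\tv\le\|\mu-\nu\|_\tv$). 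Taking the maximum over $x\in V$ on the left-hand side — the right-hand side being independent of $x$ — yields \eqref{eq:comp1}.

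There is essentially no obstacle here: the statement is a soft corollary, and the only two ingredients beyond Proposition~\ref{prop:tv} are the reverse triangle inequality and the contractivity of stochastic matrices in total variation distance, both entirely standard. The one point worth stating carefully is the contraction step $\|\mu P^t-\nu P^t\|_\tv\le\|\mu-\nu\|_\tv$, which follows by iterating $\|\mu P-\nu P\|_\tv\le\|\mu-\nu\|_\tv$; the latter is immediate from $\sum_y|\sum_x(\mu(x)-\nu(x))P(x,y)|\le\sum_x|\mu(x)-\nu(x)|\sum_y P(x,y)=\sum_x|\mu(x)-\nu(x)|$. If one prefers, the uniformity in $x$ can be made explicit by noting that both $\cD^x_{\a,\l}$ and $\cD^x_{\a,\pi_0}$ are, for each $x$, of the form $(1-\a)^t\|P^t(x,\cdot)-\nu\|_\tv$ with $\nu$ ranging over $\{\pi_{\a,\l}P^t,\pi_0\}$, so the bound is genuinely pointwise before taking the max.
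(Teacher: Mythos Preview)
Your proof is correct and follows essentially the same route as the paper's: apply Proposition~\ref{prop:tv} and Corollary~\ref{cor:la=pi}, use the reverse triangle inequality, and invoke the contractivity of $P^t$ in total variation (which the paper phrases as monotonicity of $\|\mu P^t-\nu P^t\|_\tv$ in $t$). Your write-up is simply more detailed than the paper's two-line version.
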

\begin{proof}
From the triangle inequality and the fact that $\|\mu P^t-\nu P^t\|_\tv$ is monotone in $t$ for all distributions $\mu,\nu$, one has
 	\begin{equation}\label{eq:comp2}
\left|\|P^t(x,\cdot)-\pi_{\a,\l}P^t\|_\tv - \|P^t(x,\cdot)-\pi_0\|_\tv\right| \leq  
\|\pi_{\a,\l}-\pi_0\|_\tv.
	\end{equation}
The conclusion then follows from 
Proposition \ref{prop:tv} and Corollary \ref{cor:la=pi}. \end{proof}

  \section{Main technical estimates}\label{sec:loc}
  The goal of this section is to prove Lemma \ref{le:singular}.  The proof is divided into three main steps. The first step is a  decomposition of $\pi_{\a,\l}P^t$ as a mixture of $\pi_0$ and a distribution $\mu_\l$ defined below. The second and most delicate step is the proof that $\mu_\l$ and $P^t(x,\cdot)$ are approximately singular for $t$ and $\a$ as in Lemma \ref{le:singular}. The third step concludes the desired result collecting the technical estimates established in the first two steps.  
  \subsection{Decomposition of $\pi_{\a,\l}P^t$}
  We start with a useful decomposition of $\pi_{\a,\l}P^t$ as a mixture of $\pi_0$ and a distribution $\mu_\l$ defined as follows.
  Fix $\eta\in(0,1/2)$,  $t\leq (1-2\eta)\tent$, and define $\mu_\l=\mu^{\eta,t}_\l$ and $A=A^{\eta,t}$ as
   \begin{align}
\label{eq:decomp2}
A=\sum_{k=0}^{(1-\eta)\tent - t}\a(1-\a)^k\,,\qquad \mu_\l =\frac1A \sum_{k=0}^{(1-\eta)\tent - t}\a(1-\a)^k\lambda P^{k+t}.
\end{align} 
Note that $\mu_\l$ depends on the graph $G$ while $A$ is deterministic. 
We consider the case $\a\tent\to \g\in(0,\infty]$ and treat the two cases $\g=\infty$ and $\g\in(0,\infty)$ separately. 
 \begin{lemma}\label{le:decomp0}
Fix
$s\in(0,\infty)$ and assume $\a\tent\to +\infty$ and $t=s/\a$. For all $\e>0$, there exists $\eta>0$ such that with high probability:
\begin{align}
\label{eq:decompo1}
\max_{\l\in\cS_n}\|\pi_{\a,\l}P^t - \mu_\l \|_\tv\leq \e,
\end{align} 
and the normalization in \eqref{eq:decomp2} satisfies $A\geq 1-\e$. 
\end{lemma}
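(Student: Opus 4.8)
The plan is to start from the series representation $\pi_{\a,\l}=\a\sum_{k\ge0}(1-\a)^k\l P^k$ of Proposition \ref{prop:expa}, multiply on the right by $P^t$, and split the resulting sum $\pi_{\a,\l}P^t=\a\sum_{k\ge0}(1-\a)^k\l P^{k+t}$ at the cutoff index $K:=(1-\eta)\tent-t$. The head $\a\sum_{k=0}^{K}(1-\a)^k\l P^{k+t}$ is exactly $A\,\mu_\l$ by the definition in \eqref{eq:decomp2}, so
\begin{equation*}
\pi_{\a,\l}P^t - A\,\mu_\l = \a\sum_{k>K}(1-\a)^k\l P^{k+t},
\end{equation*}
whose total variation norm is at most $\a\sum_{k>K}(1-\a)^k=(1-\a)^{K+1}$, since each $\l P^{k+t}$ is a probability vector. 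Thus the two key quantities to control are the tail mass $(1-\a)^{K+1}$ and the deficit $1-A=(1-\a)^{K+1}$ (the same number), and one must show both can be made $\le\e/2$, say, by choosing $\eta$ small, uniformly in $\l$. Then $\|\pi_{\a,\l}P^t-\mu_\l\|_\tv\le\|\pi_{\a,\l}P^t-A\mu_\l\|_\tv+|1-A|\le(1-\a)^{K+1}+(1-\a)^{K+1}$, and $A\ge1-\e$ follows from the same bound.

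The heart of the matter is therefore the estimate $(1-\a)^{(1-\eta)\tent-t+1}\le\e/3$ for $n$ large. With $t=s/\a$ one has $(1-\eta)\tent-t=(1-\eta)\tent-s/\a$, and since $\a\tent\to+\infty$ we have $s/\a=o(\tent)$, so the exponent is $(1-\eta+o(1))\tent$. Using $1-\a\le e^{-\a}$ gives $(1-\a)^{(1-\eta+o(1))\tent}\le e^{-\a\tent(1-\eta+o(1))}$, and because $\a\tent\to\infty$ this bound tends to $0$ for \emph{any} fixed $\eta\in(0,1)$ — in fact the exponential decay is at rate $\a\tent\to\infty$. So here the choice of $\eta$ is essentially free (any $\eta<1$ works, e.g.\ $\eta=\tfrac14$), and there is no real obstacle; the main care needed is just to record that $t\le(1-2\eta)\tent$ is automatically satisfied once $n$ is large (since $t=s/\a=o(\tent)$), so that $\mu_\l$ and $A$ are well-defined as in \eqref{eq:decomp2}, and that the bound is uniform in $\l\in\cS_n$ because nothing in the estimate used any property of $\l$ beyond being a probability vector.

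I would also note that in this regime the statement is actually deterministic given the degree sequences: $\tent$ and $A$ are deterministic, and the bound $\|\pi_{\a,\l}P^t-\mu_\l\|_\tv\le 2(1-\a)^{(1-\eta)\tent-t+1}$ holds for \emph{every} realization of $G$, so the phrase ``with high probability'' in the statement is only there for uniformity with the companion lemma covering the case $\g\in(0,\infty)$ (where the randomness of $G$ will matter through $\mu_\l$, not through this decomposition step). Hence the proof reduces to: (i) the algebraic splitting above; (ii) the elementary tail bound $\a\sum_{k>K}(1-\a)^k=(1-\a)^{K+1}$; (iii) the observation that $(1-\a)^{K+1}\le e^{-(1-\eta)\a\tent+s+o(\a\tent)}\to0$ because $\a\tent\to\infty$; fixing $\eta=1/4$ and taking $n$ large enough that this is $\le\e/2$ and that $t\le(1-2\eta)\tent$ completes the argument.
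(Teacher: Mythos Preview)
Your proposal is correct and follows essentially the same approach as the paper: split the series for $\pi_{\a,\l}P^t$ at $K=(1-\eta)\tent-t$, identify the head as $A\mu_\l$, and bound the tail by $1-A=(1-\a)^{K+1}\le e^{-(1-\eta)\a\tent+s}\to 0$ since $\a\tent\to\infty$. Your observation that the estimate is deterministic in $G$ and that any fixed $\eta\in(0,1)$ works is accurate (the paper's proof is likewise graph-independent here); the paper obtains the slightly sharper bound $\|\pi_{\a,\l}P^t-\mu_\l\|_\tv\le 1-A$ directly, but your $2(1-A)$ via the triangle inequality is of course equally sufficient.
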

\begin{proof}
Since $\a\tent\to +\infty$ we have $t\ll \tent$. It follows that $A\to 1$. Using Proposition \ref{prop:expa}, 
\begin{align}
\label{eq:}
\pi_{\a,\l}P^t - A\mu_\l = \sum_{k=(1-\eta)\tent - t}^{\infty}\a(1-\a)^k\l P^{k+t},
\end{align} 
and therefore $\|\pi_{\a,\l}P^t - \mu_\l \|_\tv\leq (1-A)$. \end{proof}

\begin{lemma}\label{le:decomp}
Fix  $\g\in(0,\infty)$,
$s\in(0,\g)$ and assume $\a\tent\to \g$ and $t=s/\a$. For all $\e>0$, there exists $\eta>0$ such that with high probability:
\begin{align}
\label{eq:decomp1}
\max_{\l\in\cS_n}\|\pi_{\a,\l}P^t - A\mu_\l - (1-A)\pi_0\|_\tv\leq \e,
\end{align} 
where $A=A^{\eta,t}$ and $\mu_\l=\mu^{\eta,t}_\l$ are given in \eqref{eq:decomp2}.
\end{lemma}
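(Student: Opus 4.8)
The plan is to mirror the proof of Lemma~\ref{le:decomp0}, isolating the ``short-time'' portion of the power series \eqref{eq:expa} that builds up the random walk evolution started from $\l$, and showing that the ``long-time'' tail has already relaxed to $\pi_0$. Concretely, using Proposition~\ref{prop:expa} write
\begin{equation}
\pi_{\a,\l}P^t = \a\sum_{k=0}^\infty (1-\a)^k \l P^{k+t}
= A\mu_\l + \sum_{k=(1-\eta)\tent - t + 1}^\infty \a(1-\a)^k\, \l P^{k+t}.
\end{equation}
The first block is exactly $A\mu_\l$ by definition \eqref{eq:decomp2}. For the tail, the exponent $k+t$ ranges over values at least $(1-\eta)\tent + 1$, which is past the entropic time by a factor bounded away from $1$ from below; so the plan is to invoke Theorem~\ref{th:BCS} (specifically \eqref{cutoff1} with $s = 1-\eta > $ something, but we need $s>1$, so actually one takes the cutoff at a time slightly beyond $\tent$ — see below) to say that each $\l P^{k+t}$ in the tail is w.h.p.\ close to $\pi_0$ in total variation, uniformly over $\l\in\cS_n$ and over $k$ in the tail range. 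Granting that, the tail is $\le (1-A)$-close to $(1-A)\pi_0$, and \eqref{eq:decomp1} follows from the triangle inequality.

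The one subtlety, which dictates the choice of $\eta$, is that Theorem~\ref{th:BCS} gives closeness of $P^{r}(x,\cdot)$ to $\pi_0$ only for $r \ge s\tent$ with a \emph{fixed} $s>1$, and uniformly in the starting vertex $x$; by convexity (averaging over $x$ against $\l$) this upgrades to $\|\l P^r - \pi_0\|_\tv \to 0$ uniformly in $\l\in\cS_n$ for $r\ge s\tent$. Since $t = s_0/\a \to \infty$ while $t/\tent \to s_0/\g < 1$, choosing $\eta$ small enough that $(1-\eta) + (1-s_0/\g - \text{slack}) $... more simply: every exponent in the tail satisfies $k+t \ge (1-\eta)\tent$, and since $t/\tent\to s_0/\g<1$ we also have room below — but what we need is that the tail exponents exceed $(1+\eta')\tent$ for a fixed $\eta'>0$. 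This is NOT automatic from $k+t\ge(1-\eta)\tent$. The fix is to split the tail itself into two pieces at, say, $k+t = 2\tent$: the piece with $(1-\eta)\tent \le k+t \le 2\tent$ carries total $\a$-mass at most $(1-\a)^{(1-\eta)\tent - t} - (1-\a)^{2\tent - t}$, which I would bound using $(1-\a)^{c\tent} \approx e^{-c\g}$ and choose $\eta$ so small that this mass is $\le \e/3$ regardless of its (unknown) distance to $\pi_0$; the piece with $k+t > 2\tent$ has each term within $\e/3$ of $\pi_0$ by Theorem~\ref{th:BCS} applied at $s=2$, uniformly in $\l$ by the convexity remark above.

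Putting these together: with $\eta$ chosen as above, w.h.p.\ $\|\pi_{\a,\l}P^t - A\mu_\l - (1-A)\pi_0\|_\tv \le \|\sum_{\text{tail}}\a(1-\a)^k \l P^{k+t} - (1-A)\pi_0\|_\tv$, and the latter is bounded by the small-mass piece ($\le \e/3$) plus the far piece estimate ($\le \e/3 \cdot (1-A) \le \e/3$) plus a term accounting for the mass mismatch between the two pieces and $(1-A)\pi_0$ (which is $\le \e/3$ once $\eta$ is small). The main obstacle is purely this bookkeeping of the geometric weights: one must be careful that the interpolation in scenario~2 is genuinely between $\pi_0$ and the measure $\mu_\l$ built from exponents \emph{below} the cutoff, and that the exponents \emph{between} $(1-\eta)\tent$ and the cutoff carry negligible geometric mass — this is where the hypothesis $s\in(0,\g)$ (equivalently $t = s/\a$ with $\a t = s < \g \approx \a\tent$, i.e.\ $t$ strictly below $\tent$) is used, since it guarantees $A$ is bounded away from both $0$ and $1$ and that the ``gap window'' can be made to have arbitrarily small mass by shrinking $\eta$. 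Everything else is the triangle inequality and Theorem~\ref{th:BCS}.
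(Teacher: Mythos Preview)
Your overall strategy matches the paper's: expand $\pi_{\a,\l}P^t$ via \eqref{eq:expa}, identify the block $k\le (1-\eta)\tent - t$ as $A\mu_\l$, and show the remaining tail is close to $(1-A)\pi_0$ by splitting it into a small-mass ``middle'' window and a ``far'' piece where Theorem~\ref{th:BCS} applies. The problem is your choice of split point.

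You propose to split the tail at $k+t = 2\tent$, and claim the window $(1-\eta)\tent \le k+t \le 2\tent$ has mass $\le\e/3$ once $\eta$ is small. This is false. That mass equals $(1-\a)^{(1-\eta)\tent - t} - (1-\a)^{2\tent - t}$; since $\a\tent\to\g$ and $\a t \to s$, as $\eta\to 0$ it converges to $e^{-(\g-s)} - e^{-(2\g-s)} = e^{s-\g}(1-e^{-\g})$, a fixed positive number. Shrinking $\eta$ only moves the \emph{left} endpoint of the window and cannot kill the geometric mass of an interval whose length stays of order $\tent$.

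The fix --- and this is exactly what the paper does --- is to split instead at $k+t = (1+\eta)\tent$. The middle window $(1-\eta)\tent\le k+t\le (1+\eta)\tent$ then has mass
\[
\sum_{k=(1-\eta)\tent - t}^{(1+\eta)\tent - t} \a(1-\a)^k \;\le\; C\eta,
\]
by a Riemann-sum estimate (the sum is $\sim\int e^{-u}\,du$ over a $u$-interval of length $2\eta\g$). On the far piece $k+t\ge(1+\eta)\tent$ one has, w.h.p.\ and uniformly in $z$, $\|P^{k+t}(z,\cdot)-\pi_0\|_\tv\le \eta$ by Theorem~\ref{th:BCS} (monotonicity in $t$), hence $\|\l P^{k+t}-\pi_0\|_\tv\le\eta$ uniformly in $\l$ by convexity, exactly as you observed. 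The paper carries this out pointwise for $\l=\d_z$ and then averages over $z$ against $\l$; your direct approach in terms of $\l$ is equally valid once the split point is corrected.
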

\begin{proof}
For any $a<b$, $z\in[n]$, define the probability vector
\begin{align}
\label{eq:decomp3}
\nu^z_{a,b}=\frac1{Z_{a,b}}
\sum_{k=a\tent}^{b\tent -1}\a(1-\a)^kP^{k+t}(z,\cdot)\,,\qquad 
Z_{a,b}=\sum_{k=a\tent}^{b\tent -1}\a(1-\a)^k.
\end{align} 
Since $t=s/\a$, $s\in(0,\g)$, and $\a\tent\to \g$ we may take $\eta>0$ small enough and assume that $t=\k\tent$, $\k\in(0,1-2\eta)$. 
Using Proposition \ref{prop:expa}, letting $\d_z$ denote the Dirac mass at $z$:
\begin{align}
\label{eq:decompa2}
\pi_{\a,\d_z}P^t = Z_{0,1-\eta-\k} \,\nu^z_{0,1-\eta-\k}+ Z_{1-\eta-\k,1+\eta-\k} \,\nu^z_{1-\eta-\k,1+\eta-\k} + Z_{1+\eta-\k,\infty} \,\nu^z_{1+\eta-\k,\infty}
\end{align} 
Note that $Z_{0,1-\eta-\k} =A$, and $\nu^z_{0,1-\eta-\k}=\mu_{\d_z}$. We show that the middle term above is negligible and that $\nu^z_{1+\eta-\k,\infty}$ is well approximated by $\pi_0$. If $\a\tent\to\g\in(0,\infty)$, by Riemann integration it follows that for all $n$ large enough
\begin{align}
\label{eq:decomp4}
Z_{1-\eta-\k,1+\eta-\k}\leq \sum_{k=(1-\eta-\k)\tent}^{(1+\eta-\k)\tent }\a e^{-k\a}\leq C\eta,
\end{align} 
for some constant $C>0$. 
Next, using the monotonicity in time  of total variation distance and Theorem \ref{th:BCS}, w.h.p.\ 
\begin{align}
\label{eq:decomp5}
\max_{z\in[n]}\sup_{k\geq (1+\eta-\k)\tent} \|P^{k+t}(z,\cdot)-\pi_0\|_\tv \leq
\max_{z\in[n]} \|P^{(1+\eta)\tent}(z,\cdot)-\pi_0\|_\tv 
\leq \eta.
\end{align} 
It follows that w.h.p.\
\begin{align}
\label{eq:decomp6}
\max_z\|\nu^z_{1+\eta-\k,\infty}-\pi_0\|_\tv\leq \eta. 
\end{align} 
Writing $\pi_{\a,\l}P^t =\sum_z \l(z)\pi_{\a,\d_z}P^t$ and taking $A$  and $\mu_\l$ as in  \eqref{eq:decomp2} 
concludes the proof.
\end{proof}

\subsection{Singularity of $\mu_\l$ and $P^t(x,\cdot)$}\label{sec:anticon}
The key to this result is a property of the random walk that was established in \cite{BCS1,BCS2}. Roughly speaking this says  that with high probability, for most vertices $x$, the trajectory $\{X_u^x, \,u\leq t\}$ of the walk started at $x$ up to time $t$ is supported by a ``small" directed tree $\cT_x(t)$ rooted at $x$ provided that $t\leq (1-\eta)\tent$ where $\eta$ is an arbitrary positive constant.  As a result the distribution $P^t(x,\cdot)$ is rather strongly localized. We shall see that the distribution $\mu_\l$, depending on the nature of $\l$, could be either supported on a small subset of $[n]$ (e.g.\ if $\l=\d_z$ for some $z\in[n]$) or very spread out (e.g.\ if $\l$ is widespread). The approximate singularity of $\mu_\l$ and $P^t(x,\cdot)$ turns out to be the result of a delicate structural property of the digraph $G$ which guarantees that even if $\mu_\l$ is localized it must be sufficiently smeared out and cannot concentrate on the support of $P^t(x,\cdot)$. 
We first recall the construction of the tree $\cT_x(t)$ and then address the structural properties ensuring this anti-concentration. 

\subsubsection{The tree $\cT_z(t)$}\label{sec:tree}
Given the digraph $G$, the tree $\cT_z(t)$, for fixed $t\leq (1-\eta)\tent$,  can be discovered  algorithmically as described in 
 \cite[Section 6.2]{BCS1} and \cite[Section 4.1]{BCS2}.
We recall the detailed construction for model 1. A very similar construction can be given for model 2; see \cite[Section 4.1]{BCS2}.

Below we describe a sequence of digraphs $\cG^0,\cG^1,\dots,\cG^\k$ such that at each step $\cG^\ell$ is a subset of the out-neighborhood of $z$ of height $t$ in $G$ and such that $\cG^\ell$ is obtained from $\cG^{\ell-1}$
by adding a single edge of $G$. Moreover,  we obtain a sequence of directed trees $\cT^0,\cT^1,\dots,\cT^\k$ such that for every $\ell$, $ \cT^\ell$ is a spanning tree of $\cG^\ell$. The tree $\cT_z(t)$ will be defined as $\cT_z(t)=\cT^\k$.

Initially all matchings of tails and heads in $G$ are unrevealed
and $\cG^0=\cT^0=\{z\}$; let $\partial_- \cT^\ell$ (resp. $\partial_+\cT^\ell$) denote the set of unrevealed heads (resp. tails)
whose endpoint belongs to $\cT^\ell$; the height $h(e)$ of a tail $e\in\partial_+\cT^\ell$ is defined as $1$ plus the number of edges in the unique path in $\cT^\ell$ from $z$ to the endpoint of $e$; the weight of $e\in\partial_+\cT^\ell$ is defined as 
\begin{equation}\label{eq:weighte}
\w(e) = \prod_{i=0}^{h(e)-1}\frac1{d_{x_i}^+}\,,
\end{equation}
where $(z=x_0,x_1,\dots,x_{h(e)-1})$ denotes the path in $\cT^\ell$ from $z$ to the endpoint of $e$;
we then iterate the following steps:
	\begin{itemize}
		\item a tail $e\in \partial_+\cT^\ell$ is selected with maximal weight among all $e\in\partial_+\cT^\ell$ with 
		 $h(e) \leq t$ and 
		  $\w(e) \geq \w_{\min}:=n^{-1+\eta^2}$ (using an arbitrary ordering of the tails to break ties); 
		
		\item the head $f$ matched to $e$ in $G$ is revealed, and $\cG^{\ell+1}$ is obtained from $\cG^\ell$ by adding the edge $ef$;
		
		\item if $f$ was not in $\partial_-\cT^\ell$, then its endpoint and the edge $ef$ are added to  $\cT^\ell$ to form $\cT^{\ell+1}$.
	\end{itemize}
	The process stops at $\ell=\k$ when there are no tails $e\in\partial_+\cT^\k$ with height $h(e) \leq t$ and weight $\w(e)\geq \w_{\min}$.  
Note that $\cT^\ell$ is a directed spanning tree of $\cG^\ell$ at each step. The tree $\cT_z(t)$ is defined as $\cT^\k$. 
After the construction of the tree $\cT_z(t)$, exactly $\kappa$  edges of $G$ have been revealed, some of which may not belong to $\cT_z(t)$. Note that $\cG^\k$ has $\k$ edges and coincides with the union of all directed paths from $z$ which have length at most $t$ and at least probability $\w_{\min}$ with respect to the random walk started at $z$.
As in \cite[Lemma 11]{BCS1}, \cite[Lemma 7]{BCS2}, it is not difficult to see that when exploring the out-neighborhood of $z$ in this way the number $\kappa$, regardless of the realization of $G$, is bounded as 
	\begin{equation}\label{eq:kappaw}
	\kappa\leq n^{1-\frac{\eta^2}2}.
	\end{equation}
	Let us recall the following key facts established in \cite[Section 6]{BCS1} for model 1 and in \cite[Section 4]{BCS2} for model 2.
For every $\eta>0$, for every $t\leq (1-\eta)\tent$, the trajectory $(X_0,\dots,X_t)$ of the random walk  
started at $z$ in $G$ satisfies with high probability $(X_0,\dots,X_t)\subset\cT_z(t)$ for most initial positions $z$. More precisely, let $Q_z(\cdot)$ denote the quenched law of the random walk $(X_0,X_1\dots)$ in $G$ started at $z$.  
Let $V_*$ denote the set of $z\in [n]$ such that $B^+_{z,\hslash}$
 is a directed tree, where $\hslash:=\frac1{10}\log_\Delta(n)$, and $B^+_{z,\hslash}$ denotes the out neighborhood of $z$ of height $\hslash$ in $G$ (that is the subgraph of $G$ induced by the set of vertices which can be reached from $z$ with a path of length at most $\hslash$).
Then,  from  \cite[Proposition 10, part (ii)]{BCS1}, and  
\cite[Lemma 11]{BCS2}, one has
\begin{align}\label{eq:tauze2}
\min_{z\in V_*}Q_z\left((X_0,\dots,X_t)\subset \cT_z(t)\right)\overset{\P}{\longrightarrow}1,
\end{align}
where the notation $(X_0,\dots,X_t)\subset \cT_z(t)$ indicates that the walk up to time $t$ traverses only edges of $ \cT_z(t)$.

\subsubsection{Key technical estimate}
Let $\cA_{x}(t)$ denote the set of vertices in $\cT_x(t)$ that have distance from $x$ exactly $t$ in $\cT_x(t)$. Recall the definition $\mu_\l=\mu_\l^{\eta,t}$ in \eqref{eq:decomp2}. 
\begin{lemma}\label{le:key}
Assume $\a\tent\to \g\in(0,\infty]$. Fix $\eta\in(0,1/2)$ and take $t\leq (1-2\eta)\tent$. Then, for all $\e>0$,  with high probability  
\begin{equation}
\label{eq:key1}
\max_{\l\in\cS_n}\max_{x\in[n]}\mu_\l(\cA_x(t))\leq \e.
\end{equation} 
\end{lemma}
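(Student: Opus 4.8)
The goal is to show that the mixture measure $\mu_\l$ puts vanishing mass on the "distance-exactly-$t$" layer $\cA_x(t)$ of the tree $\cT_x(t)$, uniformly over $\l\in\cS_n$ and $x\in[n]$. The first reduction is to observe that it suffices to prove this for $\l=\d_z$ a Dirac mass, since $\mu_\l(\cA_x(t))=\sum_z\l(z)\,\mu_{\d_z}(\cA_x(t))$ is a convex combination, hence bounded by $\max_z\mu_{\d_z}(\cA_x(t))$. So we need, with high probability,
\[
\max_{x,z\in[n]}\mu_{\d_z}(\cA_x(t))\le\e,\qquad
\mu_{\d_z}=\frac1A\sum_{k=0}^{(1-\eta)\tent-t}\a(1-\a)^k P^{k+t}(z,\cdot).
\]
Thus $\mu_{\d_z}(\cA_x(t))$ is (up to the normalization $A=\Theta(1)$) a weighted average over $k$ of the probabilities $P^{k+t}(z,\cA_x(t))$, i.e.\ the chance that a walk from $z$ lands inside a thin "annulus" of the tree $\cT_x(t)$ after $k+t$ steps, where $k+t\le(1-\eta)\tent$.

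\textbf{Main estimate.} The heart is to bound $P^{m}(z,\cA_x(t))$ for a fixed time $m\le(1-\eta)\tent$. The key structural input is the localization picture recalled in Section~\ref{sec:tree}: for $z\in V_*$ the walk from $z$ up to time $m$ stays inside the tree $\cT_z(m)$ with high probability (equation \eqref{eq:tauze2}), and $\cT_z(m)$ has at most $\k\le n^{1-\eta^2/2}$ edges; moreover every individual path of length $m$ inside it carries weight at least $\w_{\min}=n^{-1+\eta^2}$ or else is cut off. The plan is: (i) restrict to the event in \eqref{eq:tauze2} and to $z\in V_*$, so that $P^m(z,\cA_x(t))$ is, up to $o(1)$, the walk-mass of those length-$m$ tree-paths from $z$ whose endpoint lies in $\cA_x(t)\subset\cT_x(t)$; (ii) observe that such an endpoint must lie in the intersection $\cT_z(m)\cap\cT_x(t)$, and that the walk-mass of paths ending in any fixed vertex set $S$ is at most $\w_{\min}\cdot(\text{number of length-}m\text{ tree-paths from }z\text{ ending in }S)$ plus the negligible contribution of sub-$\w_{\min}$ paths, hence at most $|S\cap\cT_z(m)|\cdot n^{-1+\eta^2}\cdot(\text{max multiplicity of paths})$; (iii) control $|\cA_x(t)\cap\cT_z(m)|$ — this is where the randomness of $G$ enters. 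Since $\cT_x(t)$ and $\cT_z(m)$ are each discovered by revealing only $O(n^{1-\eta^2/2})$ edges of $G$, and the configuration model matches $m=\Theta(n)$ tails to heads uniformly, the probability that a given revealed head of $\cT_z(m)$ falls on the vertex set underlying $\cT_x(t)$ is $O(n^{1-\eta^2/2}/n)=O(n^{-\eta^2/2})$; a first-moment / union bound over the $\le n^{1-\eta^2/2}$ heads then shows $|\cA_x(t)\cap\cT_z(m)|=O(n^{1-\eta^2})$ with high probability, and a further union bound over all pairs $(x,z)\in[n]^2$ (costing a factor $n^2$, absorbed by choosing $\eta$ small and using exponential concentration) makes it uniform. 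Combining (ii) and (iii) gives $P^m(z,\cA_x(t))=O(n^{-\eta^2}\cdot n^{1-\eta^2})\cdot$wait — more carefully, $|S|\cdot\w_{\min}=O(n^{1-\eta^2})\cdot n^{-1+\eta^2}=O(1)$ is not yet small, so one actually needs the sharper fact that the number of length-$m$ paths in a tree ending in a set of size $r$ is at most $r$, giving walk-mass $\le r\,\w_{\min}^{?}$ — the correct bookkeeping is that the total walk-mass reaching depth $\ge$ something is $1$, and the mass on any $r$ leaves of a tree at a common depth is controlled by combining $r$ with the a priori spread; the clean way is: the walk-mass on $\cA_x(t)\cap\cT_z(m)$ is at most $\sum$ over those $\le O(n^{1-\eta^2})$ vertices of (hitting probability $\le 1$), but weighted by the geometric factors — in fact one uses that each such vertex is hit with probability $O(\w_{\min}\cdot\text{depth-}m\text{ multiplicity})$ and the tree structure caps the multiplicity, yielding an overall bound $o(1)$ once $\eta^2$ beats the union-bound exponents.

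\textbf{Assembly.} Given the per-time bound $P^{k+t}(z,\cA_x(t))\le\e/2$ uniformly in $k\le(1-\eta)\tent-t$, $x$, and $z$, one plugs into $\mu_{\d_z}(\cA_x(t))=\tfrac1A\sum_k\a(1-\a)^kP^{k+t}(z,\cA_x(t))\le\tfrac1A(\e/2)\sum_k\a(1-\a)^k\le\e/2$, using $\sum_k\a(1-\a)^k\le 1$ and $A\ge c>0$ (which holds because $t\le(1-2\eta)\tent$ forces $(1-\eta)\tent-t\ge\eta\tent$ terms, so $A=\sum_{k\le\eta\tent}\a(1-\a)^k\to 1-e^{-\eta\g}>0$ when $\g<\infty$, and $A\to1$ when $\g=\infty$; either way $A$ is bounded below). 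Taking the max over the convex combination defining $\mu_\l$ then gives \eqref{eq:key1}.

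\textbf{Main obstacle.} The delicate point is step (iii) together with the bookkeeping in (ii): one must show that the two random trees $\cT_x(t)$ and $\cT_z(m)$, although each of polynomial-in-$n$ size $n^{1-\Theta(\eta^2)}$, have an intersection small enough that the walk-mass on it is $o(1)$ — and this must hold \emph{simultaneously} for all $n^2$ pairs $(x,z)$, which forces the concentration in the edge-revelation argument to be strong (sub-exponential tail bounds from the configuration-model matching, e.g.\ via Azuma/Chernoff on the number of "bad" matched heads). Getting the exponents to line up — the union-bound cost $n^2$, the tree sizes $n^{1-\eta^2/2}$, the weight threshold $n^{-1+\eta^2}$, and the intersection probability $n^{-\Theta(\eta^2)}$ — is the real content, and it is what pins down the choice of $\w_{\min}$ and $\hslash$ inherited from \cite{BCS1,BCS2}.
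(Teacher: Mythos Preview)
Your reduction to Dirac masses is correct and matches the paper. The gap is in your ``main estimate'': you try to bound $P^{m}(z,\cA_x(t))$ for each \emph{fixed} time $m\le(1-\eta)\tent$, uniformly in $x,z$. That bound is simply false. Take $z=x$ and $m=t$: then $P^t(x,\cA_x(t))$ is essentially $1$, since by \eqref{eq:tauze2} the walk from $x$ stays in $\cT_x(t)$ and at time $t$ sits exactly on the depth-$t$ layer $\cA_x(t)$. So the very first term ($k=0$) in the sum defining $\mu_{\d_x}(\cA_x(t))$ is of order~$1$, and no per-time estimate of the form $P^{k+t}(z,\cA_x(t))\le\e/2$ can hold uniformly. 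Your own aside (``wait --- \dots $=O(1)$ is not yet small'') is exactly this obstruction surfacing in the exponent bookkeeping; it is not a matter of tightening constants but a wrong target.

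The paper does not attempt a per-time bound. Instead it exploits the factor $\a$ in front of the $k$-sum together with a \emph{path-wise} intersection bound: with high probability, every length-$u$ path $\p$ in $\cT_z(u)$ (with $u=(1-\eta)\tent$) intersects $\cA_x(t)$ at most $K$ times, where $K=K(\eta,\Delta)$ is a fixed constant (this is Lemma~\ref{lemma:new}, proved by a bad-matching argument and a union bound over all pairs $x,z$). Given that, for each fixed path one has $\sum_{k}\a(1-\a)^k\mathbf{1}(X_{k+t}\in\cA_x(t))\le \a K$, and summing over paths with their walk-weights gives $\mu_{\d_z}(\cA_x(t))\le \a K/A + o(1)\to 0$ since $\a\to 0$. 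In other words, the smallness comes from $\a$, not from any single $P^m(z,\cA_x(t))$ being small. Your step~(iii) is in the right spirit but aims at the wrong quantity: you need to control how many times a single path visits $\cA_x(t)$, not the total size of $\cA_x(t)\cap\cT_z(m)$.
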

The proof of Lemma \ref{le:key} is based on a structural property of the graph $G$ which says that the intersections of the trees $\cT_z(u)$ and $\cT_x(t)$, where $x,z$ are two arbitrary vertices and $t\leq u\leq (1-\eta)\tent$, are such that, with high probability, for all $x,z\in[n]$, no path in $\cT_z(u)$ can intersect more than $K$ times the set $\cA_x(t)$ where $K$ is a suitably large constant. 
Let us use the notation $\cP_{z}(u)$ for the set of paths in $\cT_z(u)$ having length exactly $u$ and, for all $\p\in \cP_{z}(u)$, let $V(\p)$ denote the set of vertices along that path. Note that the endpoint of $\p\in \cP_{z}(u)$ is necessarily a vertex of $\cA_z(u)$ and $|V(\p)|=u$, since $\cT_z(u)$ is a tree.
	\begin{lemma}\label{lemma:new}
	Fix $\eta\in(0,1/2)$. For every $x,z\in[n]$  and $t\leq u\le (1-\eta)T_\ent$,
	\begin{equation}\label{eq:new1}
	\P\(\exists\: \p\in\cP_z(u):\: \cA_x(t)\cap V(\p)> K    \) \le n^{-3},
	\end{equation}
	 for all $n$ large enough, where
$K=(9+3\log_2\Delta)/\eta^2$. In particular, the event 
$$
\cE_{K} = \{\forall x,z\in[n],\, \forall\p\in\cP_z(u),\, \cA_x(t)\cap V(\p)\le  K\}
$$
holds with high probability.
	\end{lemma}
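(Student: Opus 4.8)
\textbf{Proof strategy for Lemma \ref{lemma:new}.}
The plan is to bound, for fixed vertices $x,z$ and a fixed horizon $u\le(1-\eta)\tent$, the probability that some length-$u$ path $\p$ in the exploration tree $\cT_z(u)$ meets the ``far sphere'' $\cA_x(t)$ in more than $K$ vertices, and then to take a union bound over the $O(n^2)$ pairs $(x,z)$. The key point is that every such path $\p$ has $\w(\p)=\prod_{i}1/d^+_{y_i}\ge \w_{\min}=n^{-1+\eta^2}$ by the very construction of $\cT_z(u)$ (recall the stopping rule keeps only tails of weight at least $\w_{\min}$), and simultaneously $|V(\p)|=u\le(1-\eta)\tent$. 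On the other hand, any vertex $w\in\cA_x(t)$ lies at tree-distance exactly $t$ from $x$ inside $\cT_x(t)$, so the unique $x\to w$ path in $\cT_x(t)$ also has weight at least $\w_{\min}$; what matters is that the number of candidate vertices in $\cA_x(t)$ is at most $|\cT_x(t)|\le\kappa\le n^{1-\eta^2/2}$ by \eqref{eq:kappaw}.

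The heart of the argument is a first-moment / exposure bound: reveal the edges of $\cT_x(t)$ first (this exposes at most $\kappa\le n^{1-\eta^2/2}$ matchings), then run the exploration of $\cT_z(u)$. Whenever the $\cT_z(u)$-exploration probes a fresh tail, the head it is matched to is uniform over the unrevealed heads; the chance that this head lands on one of the $\le n^{1-\eta^2/2}$ vertices of $\cA_x(t)$ (each of which has $O(1)$ unrevealed heads) is at most $C n^{1-\eta^2/2}/(m - \kappa) = O(n^{-\eta^2/2})$, since $m=\Theta(n)$ and $\kappa=o(n)$. A single path $\p\in\cP_z(u)$ is built from at most $u=O(\log n)$ such matchings, so by a union bound over the positions along $\p$ and a standard ``$K$ successes out of $u$ Bernoulli-like trials'' estimate, the probability that one prescribed path hits $\cA_x(t)$ more than $K$ times is at most $\binom{u}{K}\(O(n^{-\eta^2/2})\)^{K} = O\((\log n)^{K} n^{-K\eta^2/2}\)$. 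Finally one union-bounds over all paths in $\cP_z(u)$: there are at most $\Delta^{u}=\Delta^{(1-\eta)\tent}\le n^{1-\eta}$ of them (using $\tent=\log n/H$ with $H\le\log\Delta$, so $\Delta^{\tent}=n^{\log\Delta/H}$ — one must be a bit careful and instead bound $|\cP_z(u)|$ by $\kappa\le n^{1-\eta^2/2}$, the number of leaves of the tree). Combining, the probability in \eqref{eq:new1} is at most $n^{1-\eta^2/2}\cdot (\log n)^{K}\cdot n^{-K\eta^2/2}$, which is $\le n^{-3}$ once $K\eta^2/2 \ge 4+\eta^2/2$, i.e. for $K$ of order $\eta^{-2}$; the explicit constant $K=(9+3\log_2\Delta)/\eta^2$ is what makes the bookkeeping (including the $O(1)$ per-vertex degree factors, which contribute the $\log_2\Delta$ term) go through. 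A final union bound over the $n^2$ pairs $(x,z)$ turns $n^{-3}$ into $o(1)$, giving $\cE_K$ w.h.p.

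The main obstacle, and the place requiring genuine care rather than bookkeeping, is the dependence between the two trees: $\cT_z(u)$ and $\cT_x(t)$ are explored in the same configuration, so one cannot treat the matchings as independent. The clean way around this is sequential revelation — expose $\cT_x(t)$ entirely first, note that this conditions on at most $\kappa$ edges, and then observe that the exploration of $\cT_z(u)$ only ever queries \emph{unrevealed} tails, whose partners remain uniform over the (still $\Theta(n)$-sized) pool of unrevealed heads, with the set $\cA_x(t)$ of ``bad'' heads frozen and of size $o(n)$. One also has to handle the minor subtlety that a path $\p\in\cP_z(u)$ might reuse a vertex already in $\cT_x(t)$ at an \emph{earlier} tree-level (distance $<t$ from $x$) rather than in $\cA_x(t)$ itself — but only hits of $\cA_x(t)$ are counted, so such coincidences are harmless. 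The model-2 case (OCM) is analogous, with the independent injective maps $\omega_x$ making the conditioning even cleaner; I would simply remark that the same exposure argument applies, referring to \cite[Section 4]{BCS2} for the tree construction.
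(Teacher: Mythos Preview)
Your overall strategy matches the paper's: reveal $\cT_x(t)$ first, then build $\cT_z(u)$ conditionally, note that each fresh matching lands in $\cT_x(t)$ with probability $O(n^{-\eta^2/2})$, and finish with a union bound over paths and over $(x,z)$. But the step where you equate ``$\p$ visits $\cA_x(t)$ more than $K$ times'' with ``at least $K$ fresh matchings land in $\cA_x(t)$'' has a genuine gap, and your dismissal of interior hits as ``harmless'' is exactly where it fails. If a fresh matching lands at an \emph{interior} vertex $w\in\cT_x(t)$ (which you discard as not counted), the path may then follow the already-revealed out-edges of $\cT_x(t)$ downwards and reach a leaf $y\in\cA_x(t)$ without any fresh matching landing on $y$. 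So one can have many visits to $\cA_x(t)$ with zero fresh landings in $\cA_x(t)$, and your binomial bound on landings in $\cA_x(t)$ does not control the quantity in \eqref{eq:new1}.

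The missing structural observation is that the tails of every $y\in\cA_x(t)$ are \emph{unmatched} once $\cT_x(t)$ is complete (their height is $t+1$, beyond the stopping rule). Hence after visiting any $y\in\cA_x(t)$ the next edge of the path is forced to be a fresh matching; in order to reach another $y'\in\cA_x(t)$ the path must re-enter $\cT_x(t)$, which costs exactly one ``bad matching'' (a fresh matching whose head lies anywhere in $\cT_x(t)$, not only in $\cA_x(t)$). This yields $|\cA_x(t)\cap V(\p)|\le (\text{number of bad matchings along }\p)+1$, after which your $\binom{u}{K}p^K$ estimate goes through with $p=2\Delta n^{-\eta^2/2}$. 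A secondary imprecision: the union bound over paths must run over a deterministic set; the paper dominates by a full $\Delta$-ary tree of depth $u$ with i.i.d.\ Bernoulli$(p)$ edge marks, giving $\Delta^u\le n^{\log_2\Delta}$ leaves (this is where the $3\log_2\Delta$ in $K$ comes from), whereas bounding by the random cardinality $|\cP_z(u)|\le\kappa$ would require extra justification.
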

	
\begin{figure}[h]
	\centering
	\includegraphics[width=6cm]{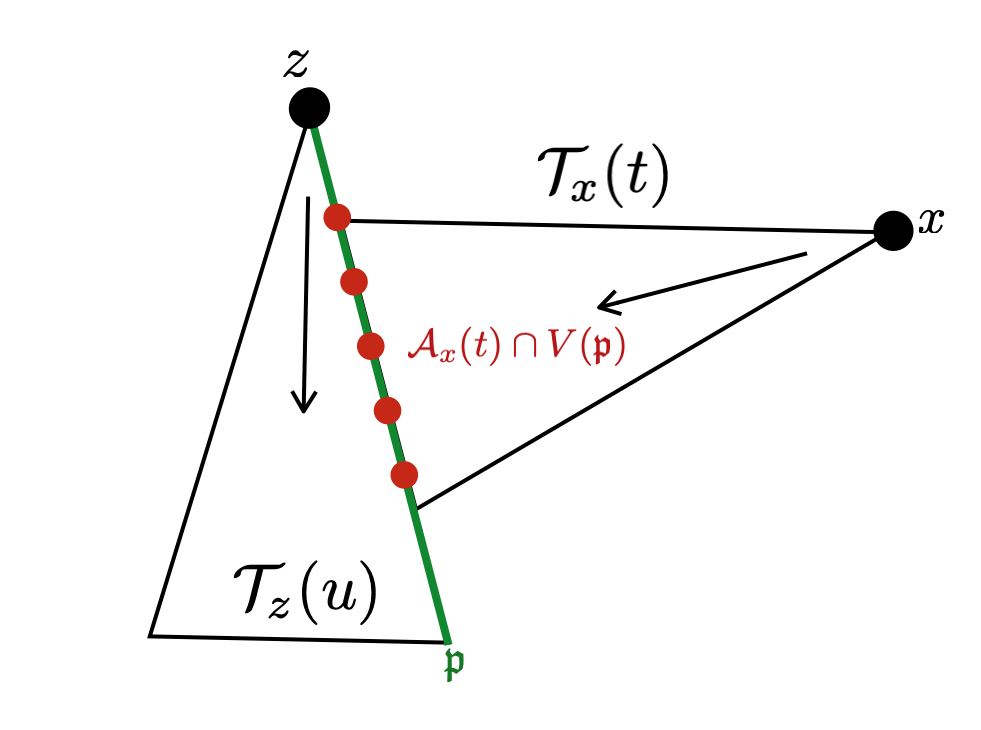}\\
	\caption{A sketch of the unlikely event in Lemma \ref{lemma:new}}	\label{fig:figbad1}
\end{figure}

	\begin{proof}
	We sample the pair $(\cT_x(t),\cT_z(u))$ in the random digraph $G$ by generating first the subgraph $\cT_x(t)$, and then the subgraph $\cT_z(u)$ conditionally on $\cT_x(t)$. The construction of $\cT_x(t)$ follows the steps described by the algorithm in Section \ref{sec:tree} with the understanding  that,  for model 1 the head $f$ to be matched to the tail $e$ to form  $\cG^{\ell+1}$ is chosen uniformly at random among all $m-\ell$ heads that are unmatched after the $\ell$-th step, while for model 2 the tail $e$  has to be connected to a uniformly random vertex in $[n]$.  The process terminates when the tree $\cT_x(t)$ has been fully generated after $\k_x$ steps. A crucial feature of this construction is that the tails of all vertices $v\in\cA_x(t)$ are unmatched once the tree $\cT_x(t)$ has been generated. 
	Moreover, the number of vertices of $\cT_x(t)$ satisfies, as in \eqref{eq:kappaw}
		\begin{equation}\label{eq:kappaw2}
|\cT_x(t)|\leq 	\kappa_x\leq n^{1-\frac{\eta^2}2}.
	\end{equation}
		Next, we generate the tree $\cT_z(u)$, conditionally on $\cT_x(t)$. This is done by starting at $z$ and by repeating the same steps for the construction described in Section \ref{sec:tree} with the difference that if at step $\ell$ a tail $e$ is chosen which had already been matched during the generation of $\cT_x(t)$ then the corresponding edge is included in the construction (and possibly in the tree being generated). The process terminates when the tree $\cT_z(u)$ has been fully generated after $\k_z$ steps. Thus, after $\k_x+\k_z$ steps we have a sample from the joint distribution of $\cT_x(t)$ and $\cT_z(u)$ in $G$. Note that the total number of edges of $G$ discovered after the generation of both trees is 
$\kappa_x+\kappa_z\leq  2n^{1-\eta^2/2}$.
	Let $\{\cF_\ell\}$ denote the filtration associated to this  generation process, so that $\cF_{\k_x}$ is the $\si$-field associated to the tree $\cT_x(t)$.  
	
	During the process generating $\cT_z(u)$ conditionally on $\cT_x(t)$, we say that a {\em bad matching} occurs at step $\ell$ if the tail chosen at that step is currently unmatched (that is it was not revealed during the sampling of $\cT_x(t)$) and it gets connected to a vertex $y$ that was already discovered  in $\cT_x(t)$.      The first key observation is that the conditional probability of a bad matching at step $\ell$ given $\cF_\ell$ is uniformly bounded above by 
	\begin{equation}\label{eq:pp}
	p:=2\D n^{-\eta^2/2}.
	\end{equation}	
	Indeed, in the case of model 1 this probability is at most $\k_x\D/ (m-\k_x-\k_z)$, while for model 2 this probability is at most $\k_x/ n$. In either case it is less than the number $p$ defined in \eqref{eq:pp} for all $n$ large enough. 
	
	The second key observation is that if a path $\p\in\cP_z(u)$ is such that  $\cA_x(t)\cap V(\p)> K$ then at least $K$ bad matchings have occurred during the formation of that given path. To see this, observe that after a vertex $y\in\cA_x(t)$ is visited for the first time during the construction of $\cT_z(u)$, the tails of $y$ will be all matched (at suitable steps $\ell_{1},\dots,\ell_{d_y^+}$) to a uniformly sampled head among the ones that are currently unmatched (for model 1) or to a uniformly random vertex (for model 2). Indeed, the tails of all vertices $v\in\cA_x(t)$ all have the same weight $\w(e)$ and all of them are unmatched after the tree $\cT_x(t)$ has been generated. 
	Also, by definition, every path in $\cT_z(u)$ can visit a given vertex $y$ at most once, and after a visit to $y\in\cA_x(t)$ it has to return to $\cT_x(t)$ with a bad matching in order to visit some other $y'\in\cA_x(t)$. Hence, the number of visits to $\cA_x(t)$ in a given path $\p\in\cP_z(u)$ is at most the number of bad matchings occurred along that path +1. The extra 1 comes from the fact that $z$ could have started already inside $\cT_x(t)$, for instance if $z\in\cA_x(t)$.

\begin{figure}[h]
	\centering
	\includegraphics[width=4.5cm]{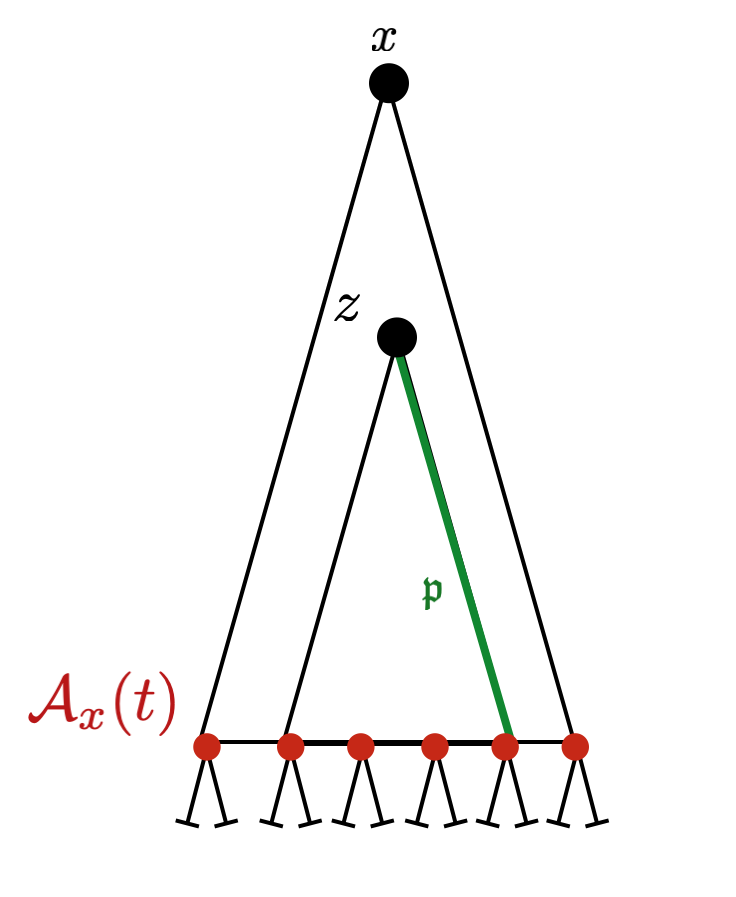}\qquad\includegraphics[width=4.5cm]{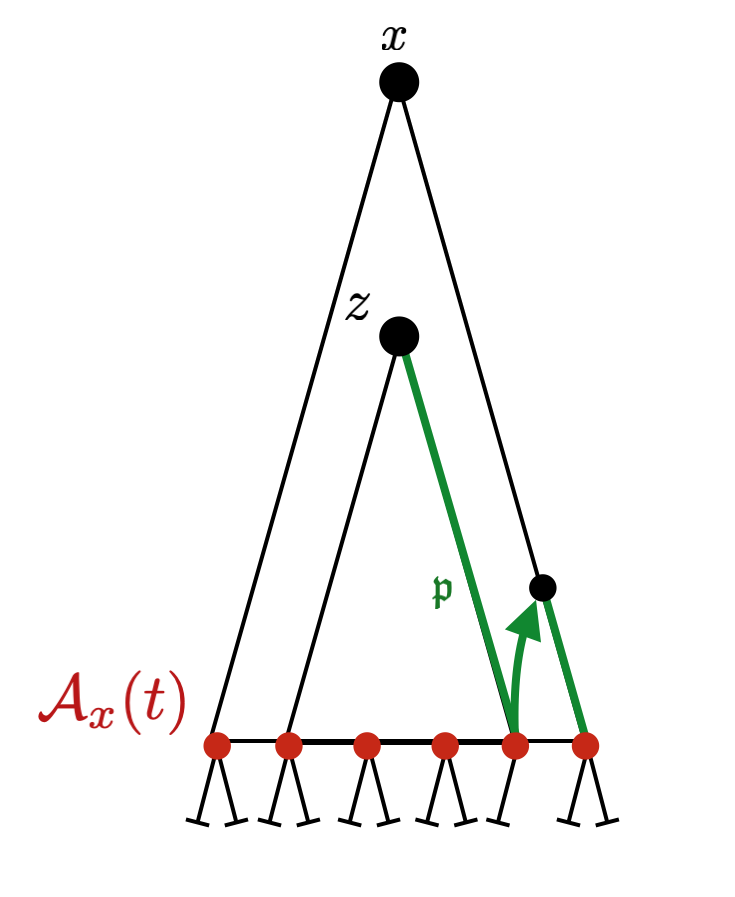}
	\caption{A first visit to $\cA_x(t)$ of a path $\p\in\cP_z(u)$ (left) and an illustration of a bad matching (right).}	\label{fig:figbad2}
\end{figure}

Next, consider an auxiliary  directed tree with random marks $\tilde\cT$ defined as follows: $\tilde\cT$ is a directed regular  tree with deterministic offspring $\Delta$ and height $u$, with independent and identically distributed Bernoulli($p$) marks on its edges, where $p$ is as in \eqref{eq:pp}. Edges whose Bernoulli  mark is $1$ are colored red. A path of length $u$ from the root to one of the leaves is called {\em bad} if it has at least  $K$ red edges. 
	The previous construction then shows that the number of $\p\in\cP_z(u)$ such that  $\cA_x(t)\cap V(\p)> K$ is stochastically dominated by the number of bad paths in $\tilde\cT$.  The probability that a given path in $\tilde\cT$ is bad is given by
	$$
	\P\( \text{Bin}(u,p)\ge K \)\le (up)^K.
	$$
	Therefore, the probability that there exists a bad path in $\tilde\cT$ is at most $\D^u (up)^K$. Since $u\leq \tent\leq \log n /\log 2$, it follows that
	\begin{equation}
	\P\(\exists\: \p\in\cP_z(u),\: \cA_x(t)\cap V(\p)> K    \) \le \Delta ^{T_\ent}(\tent p)^K\le  n^{ \frac{\log\Delta}{\log 2}-K \eta^2/3},
	\end{equation}
	for all $n$ sufficiently large. Taking $K=(9+3\log_2\Delta)/\eta^2$ concludes the proof of \eqref{eq:new1}.
	A union bound then implies that the event $\cE_K$ in the statement of the lemma holds with probability at least $1-1/n$.   
		\end{proof}
	Once Lemma \ref{lemma:new} is available, we can prove Lemma \ref{le:key}. 
	
	\begin{proof}[Proof of Lemma \ref{le:key}]

%
			The distribution $\mu_\l$ satisfies 
			$\mu_\l=\sum_{z\in[n]}\lambda(z)\mu_z$, 
				where  $\mu_z:=\mu_{\d_z}$. 
			Hence, it is sufficient to prove that w.h.p.\
			\begin{equation}
\label{eq:key1z}
\max_{z,x\in[n]}\mu_z(\cA_x(t))\leq \e.
\end{equation} 
We write
			\begin{align}\label{eq:new2}
			\mu_z(\cA_x(t))=\frac1A\sum_{k=0}^{(1-\eta)\tent - t}\a(1-\a)^kQ_z(X_{k+t}\in\cA_x(t)),
			\end{align}
where $Q_z$ is defined as in \eqref{eq:tauze2}.
			As in \cite[Propositon 6]{BCS1} one shows that for both models, with high probability: 
\begin{align}\label{eq:efx2}
\max_{z\in [n]}Q_z(X_{k_0}\in V\setminus V_{*})\leq 2^{-k_0},
\end{align} 
for any fixed constant $k_0$. 
Hence, for all non negative integers $k,t$ with $k_0\leq k+t\leq (1-\eta)\tent$:
			$$
			\max_{z\in [n]}Q_z(X_{k+t}\in\cA_x(t)) \leq  \max_{v\in V_*} Q_v(X_{k+t-k_0}\in\cA_x(t)) + 2^{-k_0}. 
			$$
			Set $u=(1-\eta)\tent$. For any $v\in V_*$ we write
			\begin{align*}
			Q_v(X_{k+t-k_0}\in\cA_x(t))
			&\le 
			Q_v\( (X_0,\dots, X_{u})\not\subset \cT_v(u) \)+\sum_{\p\in \cP_v(u)}Q_v\((X_0,\dots,X_u)=\p,\:X_{k+t-k_0}\in\cA_x(t) \).
			\end{align*}
By \eqref{eq:tauze2}, the first term in the right hand side is w.h.p.\ less than $\e$ uniformly in $v\in V_*$, for any fixed $\e>0$. The second term, taking the summation over $k\in[k_0,(1-\eta)\tent-t]$ satisfies, w.h.p.
\begin{align}
&\sum_{\p\in \cP_v(u)}\sum_{k=k_0}^{(1-\eta)\tent-t} \a(1-\a)^kQ_v\((X_0,\dots,X_u)=\p,\:X_{k+t-k_0}\in\cA_x(t) \)\nonumber\\
&\qquad \leq \a \sum_{\p\in \cP_v(u)}\sum_{k=0}^{u} Q_v\((X_0,\dots,X_u)=\p,\:X_{k}\in\cA_x(t)\)\nonumber\\&
 \qquad \leq \a K \sum_{\p\in \cP_v(u)} Q_v\((X_0,\dots,X_u)=\p\)\leq \a K,
\label{eq:efx3}
\end{align}
where $K$ is the constant from Lemma \ref{lemma:new} and we have used the fact that the event $\cE_K$ from Lemma \ref{lemma:new}  holds with high probability. From \eqref{eq:new2}-\eqref{eq:efx3}, noting that the first $k_0$ terms in the summation over $k$ contribute to \eqref{eq:new2} at most $\a k_0/A$, we then obtain, w.h.p.
 \begin{align}\label{eq:efx4}
\max_{z,x\in[n]}\mu_z(\cA_x(t))\leq \e + \frac{\a k_0}{A} + \frac{\a K}{A} + 2^{-k_0}.
\end{align}
Since $\a\tent\to\g\in(0,\infty]$ and $\a\to 0$, it follows that for all fixed $\eta>0$ one has $\a /A\to 0$ as $n\to\infty$. 
Since the parameters $\e>0$  and $k_0\in\bbN$ are arbitrary this implies the desired conclusion. 
		\end{proof}

\subsection{Proof of Lemma \ref{le:singular}}
Assume $\a\tent \to \g$, and $t=s/\a$, with $s\in(0,\g)$ as in the statement of  Lemma \ref{le:singular}. The proof below applies to both cases $\g\in(0,\infty)$ and $\g=\infty$. We are going to show that for every fixed $\e>0$, there exists an event $\cE_\e=\cE_\e(n)$ such that $\bbP(\cE_\e)\to 1$, $n\to\infty$, and such that on $\cE_\e$ for all $x\in[n]$ there are sets $\cC_{x}\subset V$ 
satisfying
 \begin{align}
 \label{eq:lesingfinal}
\max_{\l\in\cS_n}\max_{x\in[n]}\mu_\l(\cC_x)\leq \e\,,\quad 
\max_{x\in[n]}\pi_0(\cC_x)\leq \e\,,\quad \min_{x\in[n]}P^{t}(x,\cC_x)\geq 1-\e. 
\end{align}
Indeed, using the decompositions in Lemma \ref{le:decomp0} (for the case $\g=\infty$) and in Lemma \ref{le:decomp} (for the case $\g\in(0,\infty)$), if \eqref{eq:lesingfinal} holds then w.h.p.\ 
\begin{align}\label{eq:lesingfin2}
\left\| P^t(x,\cdot) - \pi_{\a,\l}P^t\right\|_{\tv}&\geq \left\| P^t(x,\cdot) - A\mu_\l - (1-A)\pi_0\right\|_{\tv} - \e
\nonumber \\& \geq P^{t}(x,\cC_x) - A\mu_\l(\cC_x) - (1-A)\pi_0(\cC_x) - \e \geq 1-3\e,
\end{align}
where it is understood that $A=1$ if $\g=\infty$. Since \eqref{eq:lesingfin2} holds uniformly in $\l$ and $x$, this completes the proof of Lemma \ref{le:singular}. We turn to the proof of \eqref{eq:lesingfinal}.

It is important that the estimates in \eqref{eq:lesingfinal} hold for all $\e>0$ and $\eta>0$ small enough (but fixed), where $\eta$ is the parameter implicit in the definition of $\mu_\l=\mu_\l^{\eta,t}$. Since $s\in(0,\g)$, we may assume $t\leq (1-2\eta)\tent$ by taking $\eta$ small enough.  
By Theorem \ref{th:BCS}
we know that for each $\d>0$, with high probability there exists sets $\cB_{x}$, such that for all $x\in[n]$: 
\begin{align}\label{eq:lesingfin}
\pi_0(\cB_{x})\leq \d \,,\qquad  P^{t}(x,\cB_{x})\geq 1-\d. 
\end{align}
For $\e>0$, take $k_0=k_0(\e)$ such that $2^{-k_0}\leq\e/2$, and call $t'=t-k_0$. 
Since $t=\Theta(\alpha^{-1})$ and $\alpha\to 0$, we have $t'>0$ for all $n$ large enough. 
For all $x\in[n]$, call $V_{x}$ the subset of vertices in $y\in V_*$ such that $P^{k_0}(x,y)>0$. Define
$$\cC_{x}=\cB_x\cap\left(\cup_{y\in V_{x}} \cA_y(t')\).$$
From \eqref{eq:tauze2} we know that, for all $\d>0$, with high probability,
\begin{equation}\label{eq323}
\min_{y\in V_*} P^{t'}(y,\cA_y(t'))\ge 1-\d.
\end{equation}
By \eqref{eq:efx2}, \eqref{eq:lesingfin} and \eqref{eq323} we obtain 
 \begin{align}\label{eq:lesingfin1}
P^{t}(x,\cC_{x})& \geq P^{t}(x,\cup_{y\in V_{x}} \cA_y(t')) - P^{t}(x,\cB_{x}^c)\nonumber \\ &\geq 
\min_{y\in V_{x}}P^{t'}(y,\cA_y(t')) - 2^{-k_0} - \d\geq 1 -2^{-k_0}- 2\d.
\end{align}
From \eqref{eq:lesingfin} we also know that $\pi_0(\cC_x)\leq \pi_0(\cB_x)\leq\d$. Taking $\d=\e/4$, this and \eqref{eq:lesingfin1} imply the last  two items in \eqref{eq:lesingfinal} since $2^{-k_0}\leq \e/2$. It remains to estimate $\mu_\l(\cC_x)$. Since $\max_{x}|V_{x}|\le \Delta^{k_0}$ we obtain
 \begin{align}
 \label{eq:olesingfinal}
 \mu_\l(\cC_x)\leq \mu_\l(\cup_{y\in V_{x}} \cA_y(t'))\leq \D^{k_0}\max_{y\in[n]}\mu_\l(\cA_y(t')).
\end{align}
From Lemma \ref{le:key} we see that  with high probability, uniformly in $\l$ and $x$, \eqref{eq:olesingfinal} is at most $\D^{k_0}\d$ for any fixed $\d>0$. Thus taking $\d=\D^{-k_0}\e$ concludes the proof of Lemma \ref{le:singular}.  

 \section{Proof of the trichotomy}
\label{sec:trichotomy}
In this section we show how to prove Theorem \ref{th:general} from the facts established above.  
Thus, $G$ is a random graph from either
the directed configuration model  DCM($\bd^\pm$) or the out-configuration model 
OCM($\bd^+$), where the degree sequences satisfy the assumptions \eqref{degs} and \eqref{degs+} respectively, and $\pi_0$ denotes the (w.h.p.) unique stationary distribution for the simple random walk on $G$. 

\subsection{Scenario 1}
We begin with scenario $1$, namely when $\a\tent\to 0$. 
\begin{proposition}\label{prop:ex1}
For 
any sequence $\a$ such that $\a\tent\to0$,
\begin{equation}\label{eq:ex2}
\max_{\l\in\cS_n}\left\| \pi_{\a,\l} - \pi_0\right\|_{\tv}\overset{\P}{\longrightarrow} 0.
\end{equation}
\end{proposition}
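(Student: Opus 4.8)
The plan is to combine the power-series representation of $\pi_{\a,\l}$ from Proposition \ref{prop:expa} with the cutoff estimate of Theorem \ref{th:BCS}. Applying \eqref{eq:expa2} with $\mu=\pi_0$ gives, for every $\l\in\cS_n$,
\[
\|\pi_{\a,\l}-\pi_0\|_\tv\le \a\sum_{k=0}^\infty(1-\a)^k\|\l P^k-\pi_0\|_\tv .
\]
The first step is to bound the summand uniformly in $\l$ by a quantity depending only on $G$ and $k$: since $\l P^k=\sum_{z}\l(z)P^k(z,\cdot)$ and $\pi_0=\sum_z\l(z)\pi_0$, the triangle inequality yields
\[
\|\l P^k-\pi_0\|_\tv\le\sum_{z}\l(z)\,\|P^k(z,\cdot)-\pi_0\|_\tv\le\max_{z\in[n]}\cD^z_0(k).
\]
Moreover $k\mapsto\max_z\cD_0^z(k)$ is non-increasing, because $\|\mu P^{k}-\pi_0\|_\tv$ is monotone in $k$ for every distribution $\mu$ (the same fact already used in Corollary \ref{cor:comp}).

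The second step is to split the geometric series at the threshold $k^\star:=\lceil 2\tent\rceil$. For the head $0\le k<k^\star$ one uses the crude bound $\max_z\cD^z_0(k)\le 1$ together with the deterministic estimate
\[
\a\sum_{k=0}^{k^\star-1}(1-\a)^k=1-(1-\a)^{k^\star}\le \a k^\star\le 2\a\tent+\a,
\]
which tends to $0$ because $\a\tent\to0$ (and hence $\a\to0$, since $\tent=\Theta(\log n)$). For the tail $k\ge k^\star$ one uses monotonicity and $\a\sum_{k\ge k^\star}(1-\a)^k\le1$ to get
\[
\a\sum_{k=k^\star}^\infty(1-\a)^k\,\max_z\cD^z_0(k)\le\max_{z\in[n]}\cD^z_0(k^\star),
\]
and the right-hand side converges to $0$ in probability by Theorem \ref{th:BCS} applied with $s=2>1$, so that $\vartheta(2)=0$.

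Putting the two pieces together yields
\[
\max_{\l\in\cS_n}\|\pi_{\a,\l}-\pi_0\|_\tv\le 2\a\tent+\a+\max_{z\in[n]}\cD^z_0\big(\lceil 2\tent\rceil\big)\overset{\P}{\longrightarrow}0,
\]
which is \eqref{eq:ex2}. I do not expect a genuine obstacle here; the only points that need a little care are that the summand bound is truly uniform in $\l$ — secured by convexity of total variation in the initial law — and that the head of the series is asymptotically negligible, which is precisely the quantitative meaning of the standing hypothesis $\a\tent\to0$.
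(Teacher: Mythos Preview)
Your proof is correct and follows essentially the same route as the paper: both start from the bound \eqref{eq:expa2}, split the geometric series at a threshold of order $s\tent$ with $s>1$, bound the head deterministically by $1-(1-\a)^{s\tent}\to0$ (you further dominate this by $\a k^\star$ via Bernoulli's inequality, which is harmless), and control the tail uniformly in $\l$ via $\|\l P^k-\pi_0\|_\tv\le\max_z\cD^z_0(k)$ together with Theorem \ref{th:BCS}. The only difference is that you make the convexity step $\|\l P^k-\pi_0\|_\tv\le\max_z\cD^z_0(k)$ and the monotonicity in $k$ more explicit, whereas the paper compresses these into one displayed inequality; the two arguments are otherwise identical.
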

\begin{proof}
We need to show that, uniformly in $\l$, for any $\d>0$, 
\begin{equation}\label{eq:ex3}
\|\pi_{\a,\l}-\pi_0\|\leq \d\,,\qquad w.h.p.	
\end{equation}
The upper bound \eqref{eq:expa2} shows that for all $t\in\bbN$:
\begin{equation}\label{eq:ex4}
\|\pi_{\a,\l}-\pi_0\|_\tv\leq(1-(1-\a)^t)+\sum_{k>t}\a(1-\a)^k\|\l P^k-\pi_0\|_\tv.	
\end{equation}
Take $t=s\tent$, with some fixed $s>1$, and observe that by Theorem \ref{th:BCS} we know that for all $k>t$, for all $\l$:
\begin{align}\label{eq:ex5}
\|\l P^k-\pi_0\|_\tv&\leq \|\l P^{s\tent}-\pi_0\|_\tv\nonumber\\
&\leq \max_{x\in V}\|P^{s\tent}(x,\cdot)-\pi_0\|_\tv\leq \d/2\,,\qquad w.h.p.	
\end{align}
In particular, using $\a t\to 0$:
\begin{equation}\label{eq:ex6}
\max_{\l\in\cS_n}\|\pi_{\a,\l}-\pi_0\|_\tv\leq(1-(1-\a)^t)+\d/2\leq \d\,,	\qquad w.h.p.	
\end{equation}
\end{proof}
The claim \eqref{scen1} is thus a consequence of Corollary \ref{cor:la=pi}, Corollary \ref{cor:comp} and Theorem \ref{th:BCS}.


\subsection{Scenario 3}\label{sec:scenario3}
Suppose $\a\tent\to+\infty$, and $t=s/\a$ for some fixed $s\in(0,\infty)$. 
From Lemma \ref{le:singular}, 
%
Proposition \ref{prop:tv} and the upper bound \eqref{eq:ubo} we obtain: 
\begin{equation}\label{eq:singulare2}
\max_{\l\in\cS_n}\max_{x\in [n]}\left| \left\| P_{\a,\l}^t(x,\cdot) - \pi_{\a,\l}\right\|_{\tv} - (1-\a)^t\right|
\overset{\P}{\longrightarrow} 0\,.
\end{equation}
Equivalently, 
\begin{equation}\label{eq:singulare3}
\max_{\l\in\cS_n}\max_{x\in [n]}\left| \cD^x_{\a,\l}(s/\a) - e^{-s}\right|
\overset{\P}{\longrightarrow} 0\,.
\end{equation}
This proves \eqref{scen3}.
\subsection{Scenario 2}\label{sec:scenario2}
Here $\a\tent\to \g\in(0,\infty)$. We take $t=s/\a$, with fixed $s\in(0,\infty)$. We consider separately the case $s\in(\g,\infty)$ and the case $s\in(0,\g)$. 

Suppose first $ s\in(\g,\infty)$.
By Proposition \ref{prop:tv} and the triangle inequality
\begin{align}\label{eq:singulare4}
\cD^x_{\a,\l}(s/\a)
&\leq \left\| P^t(x,\cdot) - \pi_{\a,\l}P^t\right\|_{\tv}\nonumber\\
&\leq \left\| P^t(x,\cdot) - \pi_0\right\|_\tv + \max_{y\in V}\left\| \pi_0-P^t(y,\cdot)\right\|_{\tv}.
\end{align}
Since $ s\in(\g,\infty)$, for some $\e>0$ we have $t\geq (1+\e)\tent$. Therefore, by Theorem \ref{th:BCS} it follows that 
\begin{align}\label{eq:singulare4a}
\max_{\l\in\cS_n}\max_{x\in [n]}\cD^x_{\a,\l}(s/\a)\overset{\P}{\longrightarrow} 0\,,\qquad s \in(\g,\infty).
\end{align}

On the other hand, suppose that $s\in(0,\g)$. Here we can apply Lemma \ref{le:singular}, Proposition \ref{prop:tv} and the upper bound \eqref{eq:ubo},  as in Section \ref{sec:scenario3} above,
to obtain 
\begin{equation}\label{eq:singulare23}
\max_{\l\in\cS_n}\max_{x\in [n]}\left| \cD^x_{\a,\l}(s/\a) - e^{-s}\right|
\overset{\P}{\longrightarrow} 0\,,\qquad s\in(0,\g).
\end{equation}
Combining \eqref{eq:singulare4a} and \eqref{eq:singulare23}, we have proved \eqref{scen2}.


\section{Widespread measures}\label{sec:martingales}
 The goal of this section is to prove Lemma  \ref{le:approx}. We remark that the statement $\|\pi_{\a,\l}-\pi_0\|_\tv\to 0$ in probability is a consequence of \eqref{eq:approx}. 
Indeed, 
fix any sequence $\a=\a(n)\to 0$, and take 
$t=t(n)\to\infty$ such that $\a t \to 0$. From \eqref{eq:approx} we know that 
\begin{equation}\label{eq:approx22}
\left\|\l P^t - \pi_0\right\|_{\tv}\overset{\P}{\longrightarrow} 0.
		\end{equation}
		As in \eqref{eq:ex4}, from the upper bound \eqref{eq:expa2} and the monotonicity in time of total variation distance to stationarity we obtain:
\begin{equation}\label{eq:approx23}
\|\pi_{\a,\l}-\pi_0\|_\tv\leq(1-(1-\a)^t)+\|\l P^t-\pi_0\|_\tv.	
\end{equation}
Using \eqref{eq:approx22} and $\a t \to 0$ we conclude the proof.
Thus, we are left to prove \eqref{eq:approx}.

In the special case where $\l=\muin$, and for the directed configuration model DCM($\bd^\pm$), a similar result was already obtained in \cite{BCS1}. Here we are going to prove it 
for the case of the out-configuration model 
OCM($\bd^+$) as well, and more importantly we are going to extend it to the case of an arbitrary widespread probability measure $\l$.
Following the approach in \cite{BCS1}, the proof of Lemma \ref{le:approx} will be based on the construction of a martingale approximation for the distribution $\l P^t$. The latter, in turn, rests on a branching approximation which allows one to couple the in-neighbourhood of a uniformly distributed random vertex of $G$ with a marked Galton-Watson tree up to depth $t=o(\log n)$.

 We start with the definition of the relevant branching processes and the associated martingales. 
These will later be used in a coupling argument to provide an approximate description of the in-neighbourhood of a vertex in our random graphs, and of the stationary distribution at that vertex. 
Since the constructions differ slightly for the two models DCM($\bd^\pm$) or  
OCM($\bd^+$) we will define two distinct random trees $\cT^-(\bd^\pm)$ and $\cT^-(\bd^+)$.

\subsection{The marked Galton-Watson trees $\cT^-(\bd^\pm)$, $\cT^-(\bd^+)$}\label{sec:gwt}
Given $n\in\bbN$, and a double sequence $ \bd^\pm$ of degrees satisfying \eqref{degsm}
 and \eqref{degs}, for each $i\in [n]$, we define the rooted random marked tree $\cT^-_i(\bd^\pm)$ recursively with the following rules:
\begin{itemize}
\item
the root is given the mark $i$;
\item
every vertex with mark $j$ has $d^-_j$ children, each of which is given independently the mark $k\in[n]$ with probability $d^+_k/m$.
\end{itemize}
On the other hand, given $n\in\bbN$, and a sequence $ \bd^+$ of degrees satisfying \eqref{degs+}, for each $i\in [n]$, the rooted random marked tree $\cT^-_i(\bd^+)$ is defined by:
\begin{itemize}
\item
the root is given the mark $i$;
\item
regardless of its own mark every vertex has, for each $j\in[n]$ independently with probability $d^+_j/n$,  a child with mark $j$.
\end{itemize}
There are several differences between the two trees $\cT^-_i(\bd^\pm)$ and $\cT^-_i(\bd^+)$. In the first case the number of children of a given vertex is a deterministic function of the vertex's mark, whereas in the second case it is a random variable $D$ that can be written as 
	\begin{equation}\label{eq:randeg}
D = \sum_{j\in[n]}Y_j\,,\qquad Y_j={\rm Ber}(d^+_j/n),
	\end{equation}
	where the $Y_j$ are independent Bernoulli random variables with parameters $d^+_j/n$. In particular, the average number of children of any given vertex in $\cT^-_i(\bd^+)$ is 
		\begin{equation}\label{eq:arandeg}
\bbE[D] = \sum_{j\in[n]}\frac{d^+_j}{n}=\frac{m}n=\left\langle d\right\rangle.
	\end{equation}
  Since $D$ can be zero, in contrast with the tree $\cT^-_i(\bd^\pm)$, the tree $\cT^-(\bd^+)$ is finite with positive probability. However, the two trees share several common features and we shall try to treat the two cases in a unified fashion as much as possible. 

We write $\bo$ for the root and $\bx,\by$
for other vertices of the tree, with the notation $\by\to \bx$ if $\by$ is a child of $\bx$. Each vertex $\bx$ of the tree has a mark, which we denote by $i(\bx)$.  
If $\cI$ denotes an  independent uniformly random  $i\in[n]$, and the root is given the mark $i(\bo)=\cI$, then we write $\cT^-(\bd^\pm)=\cT^-_\cI(\bd^\pm)$ and $\cT^-(\bd^+)=\cT^-_\cI(\bd^+)$. Notice that $\cT^-(\bd^\pm)$ and $\cT^-(\bd^+)$ have the same average degree at the root, given by \eqref{eq:arandeg}. We often write $\cT^-$ for short if this creates no confusion. For each $t\in\bbN$ we let $\cT^{-,t}$ denote the set of vertices in the generation $t$ of the tree.
Each vertex $\bx\in\cT^{-,t}$ has a unique path $(\bx_t,\bx_{t-1},\dots,\bx_1,\bx_0)$ connecting it to the root with $\bx_t=\bx$ and $\bx_0=\bo$. To any such $\bx$ we associate the weight
 	\begin{equation}\label{eq:weight}
w(\bx)=
\prod_{u=1}^t\frac1{d^+_{i(\bx_u)}}.
	\end{equation}
	If $\cT^{-,t}$ coincides with the in-neighbourhood of $\bo$ in a digraph $G$, then $w(\bx)$ is the probability that the simple random walk on $G$ goes from $\bx$ to $\bo$ in $t$ steps.  
	\subsection{Martingale approximation}
 Given a function $\varphi:[n]\mapsto \bbR$, we define the process
 	\begin{equation}\label{eq:proc1}
X_t(\varphi) = \sum_{\bx\in\cT^{-,t}}\varphi(i(\bx))w(\bx),\qquad X_0(\varphi) = \varphi(i(\bo)).
	\end{equation}
	  We write $\cF_t$ for the $\si$-algebra generated by the random tree $\cT^-$ up to and including generation $t$. 

\begin{lemma}\label{le:GW1}
Let $\cT^-$ be either $\cT^-(\bd^\pm)$ or $\cT^-(\bd^+)$, and write $\bar\varphi=\sum_{j=1}^n\varphi(j).$ Then, for all $t\in\bbN$:
 \begin{equation}\label{eq:proc2}
\bbE[X_t(\varphi)|\cF_{t-1}] = X_{t-1}(\bar\varphi\muin).
	\end{equation}
\end{lemma}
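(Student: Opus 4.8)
The plan is to compute the conditional expectation by conditioning on generation $t-1$ and summing over the children independently for each vertex $\bx\in\cT^{-,t-1}$. For a vertex $\bx\in\cT^{-,t-1}$ with mark $i(\bx)$ and weight $w(\bx)$, each child $\by\to\bx$ contributes $\varphi(i(\by))w(\by) = \varphi(i(\by))\,w(\bx)/d^+_{i(\by)}$ to $X_t(\varphi)$, because the weight formula \eqref{eq:weight} simply appends one more factor $1/d^+_{i(\by)}$ to the weight of the parent. So I would write
\begin{equation*}
X_t(\varphi) = \sum_{\bx\in\cT^{-,t-1}} w(\bx)\sum_{\by\to\bx}\frac{\varphi(i(\by))}{d^+_{i(\by)}},
\end{equation*}
and take $\bbE[\,\cdot\mid\cF_{t-1}]$. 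Since $w(\bx)$ is $\cF_{t-1}$-measurable, the task reduces to computing, for a single parent with mark $j=i(\bx)$, the quantity $\bbE\!\left[\sum_{\by\to\bx}\varphi(i(\by))/d^+_{i(\by)}\right]$.

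The second step is this per-vertex computation, done separately for the two tree models. For $\cT^-(\bd^\pm)$: the parent with mark $j$ has exactly $d^-_j$ children, each with mark $k$ independently with probability $d^+_k/m$, so
\begin{equation*}
\bbE\!\left[\sum_{\by\to\bx}\frac{\varphi(i(\by))}{d^+_{i(\by)}}\right] = d^-_j\sum_{k=1}^n\frac{d^+_k}{m}\cdot\frac{\varphi(k)}{d^+_k} = \frac{d^-_j}{m}\sum_{k=1}^n\varphi(k) = \bar\varphi\,\frac{d^-_j}{m}.
\end{equation*}
Recalling $\muin(j) = d^-_j/(n\langle d\rangle) = d^-_j/m$ for model 1, this equals $\bar\varphi\,\muin(j)$. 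For $\cT^-(\bd^+)$: the parent has, for each $k\in[n]$ independently, a child with mark $k$ with probability $d^+_k/n$, so
\begin{equation*}
\bbE\!\left[\sum_{\by\to\bx}\frac{\varphi(i(\by))}{d^+_{i(\by)}}\right] = \sum_{k=1}^n\frac{d^+_k}{n}\cdot\frac{\varphi(k)}{d^+_k} = \frac1n\sum_{k=1}^n\varphi(k) = \bar\varphi\,\frac1n,
\end{equation*}
which equals $\bar\varphi\,\muin(j)$ since $\muin(j) = 1/n$ for model 2, independently of $j$. In both cases the per-parent expectation is $\bar\varphi\,\muin(i(\bx))$.

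The final step is to reassemble: by linearity and the independence of the children's marks across different parents,
\begin{equation*}
\bbE[X_t(\varphi)\mid\cF_{t-1}] = \sum_{\bx\in\cT^{-,t-1}} w(\bx)\,\bar\varphi\,\muin(i(\bx)) = X_{t-1}(\bar\varphi\muin),
\end{equation*}
where the last equality is just the definition \eqref{eq:proc1} of the process applied to the function $j\mapsto\bar\varphi\,\muin(j)$. There is no real obstacle here; the only mildly delicate point is bookkeeping the two model definitions so that the unified quantity $\muin$ absorbs both cases, and observing that the weight recursion is exactly what makes the factors $d^+_{i(\by)}$ cancel against the resampling probabilities $d^+_k/m$ (resp.\ $d^+_k/n$) — this cancellation is the reason the statement is clean. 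One should also note the base case $t=1$ is covered by the same computation with $\cT^{-,0}=\{\bo\}$ and $w(\bo)=1$.
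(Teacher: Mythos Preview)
Your proof is correct and follows essentially the same approach as the paper: decompose the sum over generation $t$ into a sum over parents in generation $t-1$ and their children, pull out the $\cF_{t-1}$-measurable weight $w(\bx)$, and compute the per-parent expectation separately for each model, obtaining $\bar\varphi\,\muin(i(\bx))$ in both cases. The paper's argument is line-for-line the same, only slightly more terse.
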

\begin{proof}
Let the symbol $\sum_{\by\to \bx}$ denote the sum over the set of children of $\bx$ and note the symbolic identity
	\begin{equation}\label{eq:new}
	\sum_{\by\in\cT^{-,t}}\equiv\sum_{\bx\in\cT^{-,t-1}}
	\sum_{\by\to\bx }.
	\end{equation}
Therefore, 
\begin{align}\label{eq:proc3}
\bbE[X_t(\varphi)|\cF_{t-1}] &=\sum_{\bx\in\cT^{-,t-1}}\bbE\left[\sum_{\by\to\bx }\varphi(i(\by))w(\by)|\cF_{t-1}\right]\nonumber\\
&= \sum_{\bx\in\cT^{-,t-1}}w(\bx)\,\bbE\left[\sum_{\by\to\bx }\frac{\varphi(i(\by))}{d^+_{i(\by)}}\,\Big|\,\cF_{t-1}\right].
	\end{align}
	For the tree $\cT^-(\bd^\pm)$ we have
	\begin{align}\label{eq:proc4}
\bbE\left[\sum_{\by\to\bx }\frac{\varphi(i(\by))}{d^+_{i(\by)}}\,\Big|\,\cF_{t-1}\right]
=
d^-_{i(\bx)}\sum_{j=1}^n\frac{d^+_j}{m}\frac{\varphi(j)}{d^+_j}=\bar\varphi\,\muin(i(\bx)).
	\end{align}
	For the tree $\cT^-(\bd^+)$ we have
	\begin{align}\label{eq:proc5}
\bbE\left[\sum_{\by\to\bx }\frac{\varphi(i(\by))}{d^+_{i(\by)}}\,\Big|\,\cF_{t-1}\right]
=
\sum_{j=1}^n\frac{d^+_j}{n}\frac{\varphi(j)}{d^+_j}=\bar\varphi\,\muin(i(\bx)).
	\end{align}
	This proves \eqref{eq:proc2}. 
	\end{proof}

	In particular, when $\varphi=\muin$, then 
	$$
	\bbE[X_t(\muin)|\cF_{t-1}] = X_{t-1}(\muin)\,,\qquad t\in\bbN.
	$$
Therefore, $X_t(\muin)$ is a martingale with respect to the filtration $\cF_t$. It is convenient to normalize it and consider instead the martingale defined as
	\begin{align}\label{eq:defmar}
M_t = nX_t(\muin)=\sum_{\bx\in\cT^{-,t}}n\muin(i(\bx))w(\bx),\qquad M_0 = n\muin(i(\bo)).
	\end{align}
	Notice that $\bbE[M_t]=\bbE[M_0] = n\bbE[\muin(\cI)]=1$. 
In the case of model 1, the following convergence result was already discussed in  \cite[Proposition 15]{BCS1}.
	\begin{proposition}
\label{prop:marconv}
For every fixed $n$, as $t\to\infty$ the martingale $M_t$ converges to a limit $M_\infty$, both almost surely and in $L^2$ (see \cite[Ch. 12]{Williams}) and for all $t\in\bbN$:
	\begin{align}\label{eq:mar1}
\bbE[(M_t-M_\infty)^2] = C\r^t\,
	\end{align}
where the constants $\r,C$ are given by 
\begin{equation}\label{eq:ac}
\r=\sum_{j=1}^n\muin(j)\,\frac1{d^+_j}\,,\qquad 
C = \begin{cases}
\frac{n}{m(1-\r)}\sum_{j=1}^n
\frac{(d^-_j-d^+_j)^2}{md^+_j}  & \text{{\rm \small model 1}}
\\
\frac{\r-1/n}{1-\r}  & \text{{\rm \small model 2}}
\end{cases}
\end{equation}
	\end{proposition}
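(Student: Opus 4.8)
The plan is to prove the $L^2$ convergence together with the quantitative rate by computing the second moment of the martingale increments explicitly. Since $M_t$ is a martingale, the increments $M_{t+1}-M_t$ are orthogonal, so it suffices to show $\bbE[(M_{t+1}-M_t)^2] = C(1-\r)\r^t$; then Pythagoras gives $\bbE[(M_t-M_\infty)^2]=\sum_{u\geq t}\bbE[(M_{u+1}-M_u)^2] = C\r^t$, and summability of the series yields both the $L^2$ and (via the martingale convergence theorem, as $M_t\geq 0$) the almost sure convergence. So the whole statement reduces to one conditional-variance computation.

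The key step is therefore to evaluate $\bbE[(M_{t+1}-M_t)^2\mid \cF_t]$. Using $M_{t+1}-M_t = \sum_{\bx\in\cT^{-,t}} w(\bx)\bigl(\sum_{\by\to\bx} n\muin(i(\by))/d^+_{i(\by)} - n\muin(i(\bx))\bigr)$ and the conditional independence of the subtrees hanging off distinct vertices of generation $t$, the cross terms vanish and one is left with
\begin{equation}\label{eq:inc1}
\bbE[(M_{t+1}-M_t)^2\mid\cF_t] = \sum_{\bx\in\cT^{-,t}} w(\bx)^2\,\mathrm{Var}\Bigl(\sum_{\by\to\bx}\tfrac{n\muin(i(\by))}{d^+_{i(\by)}}\,\Big|\,\cF_t\Bigr).
\end{equation}
For model 1 the offspring count is the deterministic number $d^-_{i(\bx)}$ and the marks are i.i.d.\ with law $d^+_k/m$, so the conditional variance is $d^-_{i(\bx)}$ times the single-mark variance $\sum_k \tfrac{d^+_k}{m}\bigl(\tfrac{n}{\langle d\rangle n}-1\bigr)^2 = \tfrac1{\langle d\rangle}\sum_k \tfrac{(d^-_k-d^+_k)^2}{md^+_k}\cdot(\text{adjust})$ — one expands $(n\muin(k)/d^+_k)$ and uses $\bbE[\cdot]=\muin(i(\bx))\cdot\bar{}$ from Lemma~\ref{le:GW1}. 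For model 2 the offspring set is a collection of independent Bernoulli$(d^+_j/n)$ choices, so the variance of the sum is $\sum_j \tfrac{d^+_j}{n}(1-\tfrac{d^+_j}{n})\bigl(\tfrac{n\muin(j)}{d^+_j}\bigr)^2 = \sum_j \tfrac{d^+_j}{n}(1-\tfrac{d^+_j}{n})\tfrac{1}{(d^+_j)^2}$, which simplifies to $\r - 1/n$. Crucially, in both cases the per-vertex variance factor is $i(\bx)$-independent after the simplification — call it $V_0$ — so \eqref{eq:inc1} becomes $V_0\sum_{\bx\in\cT^{-,t}}w(\bx)^2$.

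The remaining ingredient is to show $\bbE\bigl[\sum_{\bx\in\cT^{-,t}}w(\bx)^2\bigr]=\r^t$. This follows by the same one-generation recursion used in Lemma~\ref{le:GW1}, applied to $w(\bx)^2$ instead of $w(\bx)$: conditioning on $\cF_{t-1}$, $\bbE[\sum_{\by\to\bx} w(\by)^2\mid\cF_{t-1}] = w(\bx)^2\,\bbE[\sum_{\by\to\bx}1/(d^+_{i(\by)})^2\mid\cF_{t-1}]$, and for model 1 this inner expectation is $d^-_{i(\bx)}\sum_k\tfrac{d^+_k}{m}\tfrac1{(d^+_k)^2}$, which after taking a further expectation over the whole tree collapses to $\r$; for model 2 it is $\sum_j\tfrac{d^+_j}{n}\tfrac1{(d^+_j)^2}=\r$ directly. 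Iterating from $\bbE[w(\bo)^2]=1$ gives $\r^t$. Combining, $\bbE[(M_{t+1}-M_t)^2]=V_0\,\r^t$, and matching $V_0/(1-\r)$ against the claimed $C$ in each model finishes the proof.

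The main obstacle is bookkeeping rather than conceptual: one must be careful that the per-vertex variance in model 1 genuinely does not depend on the mark $i(\bx)$ beyond the factor $d^-_{i(\bx)}$, and that the algebraic identity $\tfrac{1}{\langle d\rangle}\sum_k\tfrac{(d^-_k-d^+_k)^2}{md^+_k}\big/(1-\r)$ matches the stated constant $C$ — this requires expanding $(d^-_k-d^+_k)^2$ and using $\sum_k d^-_k=\sum_k d^+_k=m$ together with $n\muin(k)=d^-_k/\langle d\rangle$. There is also a minor subtlety for model 2 in that $\cT^-(\bd^+)$ can be finite, but since all $w(\bx)\geq 0$ and $M_t\geq 0$ the martingale convergence theorem still applies and the $L^2$ bound \eqref{eq:mar1} is unaffected (an extinct tree simply contributes $M_u=0$ for all later $u$).
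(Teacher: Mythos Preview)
Your overall strategy matches the paper's: compute the conditional second moment of the increments, establish a one-step geometric recursion with ratio $\r$, and sum. For model~2 your computation is essentially correct and coincides with the paper's, since there $n\muin\equiv 1$ and the offspring law is mark-independent.

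For model~1, however, there is a genuine gap. The per-vertex conditional variance in \eqref{eq:inc1} is \emph{not} mark-independent: it equals $d^-_{i(\bx)}$ times the single-child variance, so
\[
\bbE[(M_{t+1}-M_t)^2\mid\cF_t]=\frac{C_1}{\langle d\rangle^2}\sum_{\bx\in\cT^{-,t}} d^-_{i(\bx)}\,w(\bx)^2,
\qquad C_1=\sum_{k}\frac{(d^-_k-d^+_k)^2}{m\,d^+_k}.
\]
You then need a recursion for the \emph{weighted} sum $\sum_{\bx} d^-_{i(\bx)}w(\bx)^2$ (equivalently $\sum_{\bx} n\muin(i(\bx))w(\bx)^2$), not for $\sum_\bx w(\bx)^2$. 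Your claimed identity $\bbE\bigl[\sum_{\bx\in\cT^{-,t}} w(\bx)^2\bigr]=\r^t$ is false for model~1: the one-step conditional expectation of $\sum_{\by\to\bx}1/(d^+_{i(\by)})^2$ is $d^-_{i(\bx)}\sum_k \frac{1}{m\,d^+_k}$, and the factor $\sum_k \frac{1}{m\,d^+_k}$ is \emph{not} $\r=\sum_k\frac{d^-_k}{m\,d^+_k}$ unless all $d^-_k$ coincide; moreover the mark-dependent prefactor $d^-_{i(\bx)}$ prevents the recursion for $\sum_\bx w(\bx)^2$ from closing at all. The fix, which is exactly what the paper does, is to carry the weight $n\muin(i(\bx))$ through: one checks directly that
\[
\bbE\Bigl[\sum_{\bx\in\cT^{-,t}} n\muin(i(\bx))\,w(\bx)^2\,\Big|\,\cF_{t-1}\Bigr]=\r\sum_{\bx'\in\cT^{-,t-1}} n\muin(i(\bx'))\,w(\bx')^2,
\]
so this weighted sum has expectation $\r^t$, and combining with the variance formula above yields $\bbE[(M_{t+1}-M_t)^2]=\frac{n}{m}C_1\,\r^t=C(1-\r)\r^t$. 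Your ``obstacle'' paragraph comes close to noticing the issue but does not resolve it; the weight $n\muin$ is not optional bookkeeping but the mechanism that makes the recursion close.
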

	\begin{proof}
	Consider the increments  
		\begin{align}\label{eq:mar2}
\D_t=M_{t+1}-M_t .
	\end{align}
Reasoning as in Lemma \ref{le:GW1}, for both models we write 
	
	\begin{align}\label{eq:mar2bis2}
	\D_t=\sum_{\bx\in\cT^{-,t}}n\muin(i(\bx))w(\bx)\psi(\bx),
	\end{align}
	where $\psi$ is defined as
	\begin{align}\label{eq:mar2bis}
\psi(\bx)
	=\sum_{\by\to\bx }\frac{\muin(i(\by))}{\muin(i(\bx))d^+_{i(\by)}} - 1.
	\end{align}
As in Lemma \ref{le:GW1} one has $\bbE[\psi(\bx)\tc\cF_t]=0$. Let us compute $\bbE[\psi(\bx)^2|\,\cF_t]$.
For the tree $\cT^-(\bd^\pm)$, we can rewrite
	\begin{align}\label{eq:mar3}
\psi(\bx)
= \sum_{\by\to\bx }\left[\frac{\muin(i(\by))}{\muin(i(\bx))d^+_{i(\by)}} - \frac1{d^-_{i(\bx)}}\right].
\end{align}
Therefore,
	\begin{align}\label{eq:mar4}
\bbE[\psi(\bx)^2|\,\cF_t]& 
=
d^-_{i(\bx)}\sum_{j=1}^n\frac{d^+_j}{m}\left(\frac{d^-_j}{d^-_{i(\bx)}d^+_j}-\frac1{d^-_{i(\bx)}}\right)^2=\frac{C_1}{d^-_{i(\bx)}}
\,,
	\end{align}
where we use the notation  
$$
C_1=\sum_{j=1}^n\frac{(d^-_j-d^+_j)^2}{md^+_j} .
$$
For the tree $\cT^-(\bd^+)$ we have
	\begin{align}\label{eq:mar5}
\bbE[\psi(\bx)^2\tc\cF_t]& 
= \bbE\left[\left(\sum_{\by\to\bx }\frac1{d^+_{i(\by)}}\right)^2-2\sum_{\by\to\bx }\frac1{d^+_{i(\by)}} + 1\,\Big|\,\cF_t\right]\nonumber\\&=
\sum_{j\neq j'}\frac{d^+_jd^+_{j'}}{n^2}\frac{1}{d^+_{j}d^+_{j'}} + \sum_{j}\frac{d^+_j}{n}\frac1{(d^+_j)^2}- 
2\sum_{j}\frac{d^+_j}{n}\frac1{d^+_j}+1 = \r-\frac1n
\,,
	\end{align}
where $\rho$ is as in \eqref{eq:ac}. 
Since $\bbE[\psi(\bx)\psi(\bx')\tc\cF_t]=0$ for all $\bx,\bx'\in\cT^{-,t}$ with $\bx\neq \bx'$, 
\begin{align}\label{eq:mar6}
\bbE[\D_t^2\tc\cF_t]& = \sum_{\bx\in\cT^{-,t}}n^2\muin(i(\bx))^2w(\bx)^2
\bbE[\psi(\bx)^2\tc\cF_t]. 
\end{align}
Therefore, combining \eqref{eq:mar4} and \eqref{eq:mar5} we have 
\begin{align}\label{eq:mar7}
\bbE[\D_t^2\tc\cF_t]& =  C(1-\r)\sum_{\bx\in\cT^{-,t}}n\muin(i(\bx))w(\bx)^2\,,
\end{align}
where $\r,C$ are given by  \eqref{eq:ac}. Furthermore, observe that in both models one has
\begin{align}\label{eq:mar8}
\bbE[\D_t^2\tc\cF_{t-1}]& = \bbE\left[\bbE[\D_t^2\tc\cF_{t}]\tc\cF_{t-1}\right]\nonumber\\
&= C(1-\r)\sum_{\bx\in\cT^{-,t-1}}n\muin(i(\bx))w(\bx)^2 \bbE\left[\sum_{\by\to \bx}\frac{\muin(i(\by))}{\muin(i(\bx))(d^+_{i(\by)})^2}\tc\cF_{t-1}\right]\nonumber\\ & = C(1-\r)\r\sum_{\bx\in\cT^{-,t-1}}n\muin(i(\bx))w(\bx)^2=\r\,\bbE[\D_{t-1}^2\tc\cF_{t-1}]. 
\end{align}
Thus, iterating we obtain
\begin{align}\label{eq:mar9}
\bbE[\D_t^2]
= \bbE[\D_{0}^2]\r^t = C(1-\r)\bbE[n\muin(\cI)] \r^t = C(1-\r)\r^t. 
\end{align}
Since $d^+_j\geq 2$ one has $\r\leq 1/2$. Thus $M_t$ is a martingale bounded in $L^2$, and therefore $M_t\to M_\infty$ almost surely and in $L^2$, for some $M_\infty\in L^2$. Using the orthogonality $\bbE[\D_t\D_{t'}]=0$ for all $t\neq t'$, \eqref{eq:mar1} follows by summing \eqref{eq:mar9} from $t$ to $+\infty$.  
	\end{proof}
	\begin{remark}
For each fixed $n\in\bbN$, one can characterise the random variable $M_\infty$ as the solution to a distributional fixed point equation.
For the directed configuration model DCM($\bd^\pm$) this is discussed in \cite[Lemma 16]{BCS1}. With a similar reasoning, for the out-configuration model OCM($\bd^+$) one obtains that 
\begin{align}\label{eq:rde}
M_\infty
\overset{d}{=}
\sum_{j=1}^n\frac{Y_j}{d^+_j} \,M_{\infty,j},
\end{align}
where $\overset{d}{=}$ stands for equality of distributions, $M_{\infty,j}$ are i.i.d.\ copies of $M_\infty$ and $Y_j$ are independent Bernoulli random variables with parameter $d^+_j/n$.  
\end{remark}
The next result will be crucial for the analysis of widespread measures. Notice that the constant $\g(\l)$ appearing in the estimate below is bounded uniformly in $n$ if and only if $\l$ satisfies \eqref{def:ws2}. 

	\begin{proposition}\label{prop:mara}
For any probability vector $\l$, and any $t\in\bbN$:
 	\begin{align}\label{eq:mara1}
\bbE[(M_t-nX_t(\l))^2] \leq \g(\l)\r^t\,,
	\end{align}
	where $\r\in(0,1)$ is as in Proposition \ref{prop:marconv} and $\g(\l)$ is defined as
	\begin{equation}\label{eq:mara2}
\g(\l) = \frac{n}{2}\sum_{j=1}^n(\l(j)-\muin(j))^2
\end{equation}
	\end{proposition}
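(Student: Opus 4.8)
The plan is to read the estimate off the martingale bookkeeping already carried out for Proposition~\ref{prop:marconv}. Put $\varphi:=\l-\muin$. Since $\varphi\mapsto X_t(\varphi)$ is linear, $M_t-nX_t(\l)=nX_t(\muin)-nX_t(\l)=-\,nX_t(\varphi)$, and $\bar\varphi=\sum_{j=1}^n\varphi(j)=1-1=0$. By Lemma~\ref{le:GW1} this gives $\bbE[X_t(\varphi)\mid\cF_{t-1}]=X_{t-1}(\bar\varphi\,\muin)=0$, i.e.\ $X_t(\varphi)$ has vanishing conditional mean given the tree up to generation $t-1$; hence $\bbE[(M_t-nX_t(\l))^2]=n^2\,\bbE\bigl[\bbE[\,X_t(\varphi)^2\mid\cF_{t-1}\,]\bigr]$, and it remains to bound this conditional second moment.

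To do so I would telescope $X_t(\varphi)$ over the last generation, as in the proof of Lemma~\ref{le:GW1}: using $w(\by)=w(\bx)/d^+_{i(\by)}$ for $\by\to\bx$ one writes $X_t(\varphi)=\sum_{\bx\in\cT^{-,t-1}}w(\bx)\,\zeta(\bx)$ with $\zeta(\bx):=\sum_{\by\to\bx}\varphi(i(\by))/d^+_{i(\by)}$. Conditionally on $\cF_{t-1}$ the variables $\{\zeta(\bx)\}_{\bx\in\cT^{-,t-1}}$ are independent and, by the computation \eqref{eq:proc4}--\eqref{eq:proc5} with $\varphi$ in place of $\muin$ and $\bar\varphi=0$, each has conditional mean $0$; so the cross terms vanish and $\bbE[X_t(\varphi)^2\mid\cF_{t-1}]=\sum_{\bx\in\cT^{-,t-1}}w(\bx)^2\,\bbE[\zeta(\bx)^2\mid\cF_{t-1}]$. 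A short first-moment computation, split by model just as in \eqref{eq:mar4}--\eqref{eq:mar5}, then gives
\begin{equation*}
\bbE[\zeta(\bx)^2\mid\cF_{t-1}]=\muin(i(\bx))\sum_{j=1}^n\frac{\varphi(j)^2}{d^+_j}\qquad\text{for }\cT^-(\bd^\pm),
\end{equation*}
and the same expression with ``$\le$'' for $\cT^-(\bd^+)$ (bound the Bernoulli variances $\tfrac{d^+_j}{n}(1-\tfrac{d^+_j}{n})\le\tfrac{d^+_j}{n}$ and use that the inter-mark cross terms are nonpositive, since $\bar\varphi=0$).

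Inserting this, summing, and taking expectations, one recognises the quantity $W_t:=\sum_{\bx\in\cT^{-,t}}n\,\muin(i(\bx))\,w(\bx)^2$ which already occurs in the proof of Proposition~\ref{prop:marconv} through $\bbE[\D_t^2\mid\cF_t]=C(1-\r)W_t$ and there satisfies $\bbE[W_t]=\r^t$; thus
\begin{equation*}
\bbE\bigl[(M_t-nX_t(\l))^2\bigr]\ \le\ n\Bigl(\sum_{j=1}^n\frac{\varphi(j)^2}{d^+_j}\Bigr)\,\bbE[W_{t-1}]\ =\ n\,\r^{\,t-1}\sum_{j=1}^n\frac{(\l(j)-\muin(j))^2}{d^+_j}.
\end{equation*}
Finally, the standing assumption $d^+_j\ge 2$ (see \eqref{degs}, \eqref{degs+}) gives $\sum_j\varphi(j)^2/d^+_j\le\tfrac12\sum_j\varphi(j)^2$, so the last expression is at most $\g(\l)\,\r^{\,t-1}$, which is the asserted estimate; the exact power of $\r$ is immaterial, since only the geometric rate $\r\in(0,1)$ is used when this proposition is applied in the proof of Lemma~\ref{le:approx}.

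I do not expect a genuine obstacle here: every step is a minor variant of a computation already present in Section~\ref{sec:martingales}. The two points that deserve some care are (i) checking that for model~2 the two bounds above (the Bernoulli-variance estimate and the nonpositivity of the inter-mark cross terms) really force $\cT^-(\bd^+)$ to obey the same inequality as $\cT^-(\bd^\pm)$, and (ii) keeping the generation index consistent — in particular that $W_0=n\,\muin(i(\bo))$ has mean $1$, which is what pins down the precise power of $\r$ in the final bound.
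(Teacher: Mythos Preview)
Your proposal is correct and follows essentially the same approach as the paper: both set $\varphi$ proportional to $\l-\muin$, use $\bar\varphi=0$ to kill the conditional mean via Lemma~\ref{le:GW1}, compute the conditional second moment model by model, and then invoke the recursion for $\sum_{\bx\in\cT^{-,t}}n\muin(i(\bx))w(\bx)^2$ already established in Proposition~\ref{prop:marconv}, together with $d^+_j\ge 2$. Your observation about obtaining $\rho^{t-1}$ rather than $\rho^t$ is apt---the paper's own computation lands at the same exponent, and as you note this is harmless for the application in Lemma~\ref{le:approx}.
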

	\begin{proof}
Setting $\varphi(j)=n(\muin(j)-\l(j))$, we write $M_t-nX_t(\l)=X_t(\varphi)$. Since $\bar\varphi=0$, Lemma \ref{le:GW1}
shows that $\bbE[M_t-nX_t(\l)|\cF_{t-1}] =0$. We now compute 
$$\G_{t}:=\bbE[(M_{t+1}-nX_{t+1}(\l))^2|\cF_{t}].$$
Using $\bar\varphi=0$ one has
\begin{align}\label{eq:mara3}
\G_t&=\bbE[X_{t+1}(\varphi)^2|\cF_{t}]
\nonumber\\&
=
\sum_{\bx\in\cT^{-,t}}w(\bx)^2
\,\bbE\left[\left(\sum_{\by\to\bx }\frac{\varphi(i(\by))}{d^+_{i(\by)}}\right)^2\Big|\,\cF_{t}\right].
	\end{align}
	For the tree $\cT^-(\bd^\pm)$ we have
	\begin{align}\label{eq:mara4}
\bbE\left[\left(\sum_{\by\to\bx }\frac{\varphi(i(\by))}{d^+_{i(\by)}}\right)^2\Big|\,\cF_{t}\right]
=
d^-_{i(\bx)}\sum_{j=1}^n\frac{d^+_j}{m}\frac{\varphi(j)^2}{(d^+_j)^2}=\muin(i(\bx))\sum_{j=1}^n\frac{\varphi(j)^2}{d^+_j}.
	\end{align}
On the other hand for the tree $\cT^-(\bd^+)$ we have
	\begin{align}\label{eq:mara5}
\bbE\left[\left(\sum_{\by\to\bx }\frac{\varphi(i(\by))}{d^+_{i(\by)}}\right)^2\Big|\,\cF_{t}\right]
&=
\sum_{j\neq j'}\frac{\varphi(j)\varphi(j')}{n^2} + \sum_{j}\frac{\varphi(j)^2}{nd^+_j}
\nonumber\\ & =  
\frac1n\sum_{j=1}^n\frac{\varphi(j)^2}{d^+_j} \left(1 - \frac{d^+_j}n\right) 
	\end{align}
Summarising, we have shown that
 \begin{equation}\label{eq:mara6}
\G_t= C(\l)\sum_{\bx\in\cT^{-,t}}n\muin(i(\bx))w(\bx)^2\,,\qquad 
C(\l)=\frac1n
\begin{cases}\sum_{j=1}^n\frac{\varphi(j)^2}{d^+_j} & \text{{\rm \small model 1}}
\\
\sum_{j=1}^n\frac{\varphi(j)^2}{d^+_j} \left(1 - \frac{d^+_j}n\right)  & \text{{\rm \small model 2}}
\end{cases}
\end{equation}
Thus, the same argument used in \eqref{eq:mar8} implies that 
in both models 
\begin{align}\label{eq:mara7}
\bbE[\G_t\tc\cF_{t-1}]& = \r\G_{t-1},
\end{align}
where $\rho$ is defined as in \eqref{eq:ac}. Therefore, 
\begin{align}\label{eq:mara8}
\bbE[\G_t] = \bbE[\G_{0}]\r^t = C(\l)\bbE[n\muin(\cI)]\r^t= C(\l)\r^t.
\end{align}
The desired bound follows from the fact that in both models $C(\l)\leq\g(\l)$. 
	\end{proof}
 
 \subsection{Branching approximation for in-neighbourhoods}\label{brax}
The $t$-in-neighbourhood of a vertex $v$, denoted $B^-_{v,t}$,  is defined as the subgraph of $G$ induced by the set of directed paths of length $t$ in $G$ which terminate at vertex $v$. Here we observe that for any fixed $v\in[n]$, if $t$ is a small multiple of $\log n$ then with high probability $B^-_{v,t}$ can be coupled to the first $t$ generations of the random trees defined in Section \ref{sec:gwt}. 
We consider the two models separately.

\subsubsection{$B^-_{v,t}$ for DCM($\bd^\pm$)}\label{sec:generat}
Recall that each vertex $x$ has $d^-_x$ heads and $d^+_x$ tails. Call $E_x^-$and $E_x^+$ the sets of heads and tails at $x$ respectively. The uniform bijection $\o$ between heads and tails, viewed as a matching, 
can be sampled by iterating the  following steps until there are no unmatched heads left:
\begin{enumerate}[1)]
\item pick an unmatched head $e_-$ according to some priority rule;
\item pick an unmatched tail $e_+$ uniformly at random;
\item match $e_-$ with $e_+$, i.e.\ set $\o(e_+)=e_-$.
\end{enumerate}
Note that this gives the desired uniform distribution over matchings regardless of the priority rule chosen at step 1. The graph $G$ is obtained by adding a directed edge $(x,y)$ whenever $e_-\in E_y^-$ and $e_+\in E_x^+$ in step 3 above. 

To generate $B^-_{v,t}$ only, one can start at vertex $v$ and run the previous sequence of steps, by giving priority to those unmatched heads which have minimal distance from vertex $v$, until this minimal distance exceeds $t$, at which point the process stops. During the process, say that a vertex $x$ is {\em exposed} if at least one of the tails $e_+\in E_x^+$ or heads $e_-\in E_x^-$ has been already matched. Notice that as long as in step 2 no tail $e_+$ is picked from exposed vertices, the resulting digraph is a directed tree. 

Let us now describe a coupling of the in-neighbourhood 
 $B^-_{v,t}$ and the marked tree $\cT^-_{v,t}(\bd^\pm)$, where $\cT^-_{v,t}(\bd^\pm)$ stands for the marked tree $\cT^-_v(\bd^\pm)$  up to generation $t$; see Section \ref{sec:gwt} for the definition of $\cT^-_v(\bd^\pm)$.
 Clearly, step $2$ above can be modified by picking $e$ uniformly at random among all (matched or unmatched) tails and rejecting the proposal if the tail was already matched.
The tree can then be generated by iteration of the same sequence of steps with the difference that at step $2$ we  
never reject the proposal 
and at step $3$ we add a new leaf 
to the current tree, with mark $x$ if $e_+\in E_x^+$, together with a new set of $d^-_x$ unmatched heads attached to it.   
Call $\t$ the first time that a uniform random choice among all tails gives $e_+\in E_x^+$ with $x$ already 
in the tree. By construction,   the in-neighbourhood and the tree coincide up to time $\t$.
At the $k$-th iteration, the probability of picking a tail with a mark already used is at most $k\D/m$, where $\D$ is the maximum degree. 
Therefore, by a union bound,
\begin{align}\label{eq:coup1}
\bbP(\t\leq k) \leq \frac{k^2\D}{m}. 
\end{align}
Taking $k=\D^{t+1}$ steps, we have necessarily uncovered the whole in-neighbourhood 
 $B^-_{v,t}$. Thus, we have proved the following statement.
 \begin{lemma}\label{le:coup1}
 The $t$-in-neighbourhood 
 $B^-_{v,t}$ and the marked tree $\cT^-_{v,t}(\bd^\pm)$ can be coupled in such a way that 
 \begin{align}\label{eq:coup12}
\bbP\left(B^-_{v,t}\neq\cT^-_{v,t}(\bd^\pm)\right) \leq \frac{\D^{2t+3}}{m}. 
\end{align}
 \end{lemma}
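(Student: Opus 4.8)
The plan is to make rigorous the coupling already sketched in the paragraphs preceding the statement. First I would spell out the joint construction. Run the head–matching exploration of $B^-_{v,t}$ using the priority rule "always process an unmatched head at minimal distance from $v$", and modify step 2 so that the tail $e_+$ is drawn uniformly among \emph{all} $m$ tails, rejecting and redrawing if $e_+$ has already been matched. Drive the growth of the marked tree $\cT^-_{v,t}(\bd^\pm)$ by the very same stream of uniform tail draws: here one never rejects; whenever the drawn tail lies in $E_x^+$ one attaches to the current boundary vertex a fresh leaf with mark $x$, together with a fresh set of $d^-_x$ heads. Since the tail is uniform over all $m$ tails, the conditional probability (given the past) that the new leaf gets mark $x$ is exactly $d^+_x/m$, which is precisely the offspring–mark law defining $\cT^-(\bd^\pm)$ in Section \ref{sec:gwt}; so this procedure genuinely realizes $\cT^-_{v,t}(\bd^\pm)$.

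Second, introduce the stopping time $\t$, the first iteration at which the drawn tail belongs to a vertex already present in the structure built so far. Up to time $\t$ the two procedures perform identical moves: in the graph exploration no proposal is rejected (a matched tail necessarily belongs to an already–present vertex) and no already–present vertex is revisited, so the explored subgraph is a directed tree and coincides step by step with the partially grown $\cT^-(\bd^\pm)$. Hence, if $k$ is any number of matching steps large enough to have fully uncovered $B^-_{v,t}$, then on $\{\t>k\}$ we get $B^-_{v,t}=\cT^-_{v,t}(\bd^\pm)$. Because every vertex has in-degree at most $\D$, the set of vertices of $B^-_{v,t}$ has size at most $1+\D+\dots+\D^t$, and the number of head–matchings needed to reveal $B^-_{v,t}$ (namely the total number of heads of vertices at distance $\le t-1$ from $v$) is at most $\D+\D^2+\dots+\D^t\le \D^{t+1}$ for $\D\ge 2$; so $k=\D^{t+1}$ steps suffice.

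Third, bound $\bbP(\t\le \D^{t+1})$. At the start of iteration $k$ at most $k$ distinct vertices are present, each carrying at most $\D$ tails, so the conditional probability that the $k$-th draw hits a tail of an already–present vertex is at most $k\D/m$; this is exactly the estimate behind \eqref{eq:coup1}. A union bound over $k=1,\dots,\D^{t+1}$ gives
\[
\bbP(\t\le \D^{t+1})\ \le\ \sum_{k=1}^{\D^{t+1}}\frac{k\D}{m}\ \le\ \frac{\D\,(\D^{t+1})^2}{m}\ =\ \frac{\D^{2t+3}}{m},
\]
and since $\{B^-_{v,t}\neq\cT^-_{v,t}(\bd^\pm)\}\subseteq\{\t\le \D^{t+1}\}$, this yields \eqref{eq:coup12}.

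The argument is essentially routine; the only points that require a little care are checking that the tree-side mark distribution matches the uniform tail draw (so that one is really sampling $\cT^-(\bd^\pm)$), observing that "no collision" forces the explored subgraph to be an honest directed tree, and the crude but sufficient counting that $\D^{t+1}$ matchings exhaust $B^-_{v,t}$. I do not expect a serious obstacle here.
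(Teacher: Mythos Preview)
Your proposal is correct and follows essentially the same argument as the paper: the paper constructs the identical coupling (uniform tail draws, rejection on the graph side, no rejection on the tree side), introduces the same collision time $\t$, bounds the collision probability at step $k$ by $k\D/m$, and takes $k=\D^{t+1}$ to obtain \eqref{eq:coup12} via the union bound \eqref{eq:coup1}. Your write-up is simply a more explicit version of that derivation, with the additional sanity checks (correct mark law on the tree side, the inclusion $\{\text{rejection}\}\subset\{\t\le k\}$, the crude count $\D^{t+1}$) spelled out.
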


\subsubsection{$B^-_{v,t}$ for OCM($\bd^+$)}\label{sec:inneighocm}
 Recall that each vertex $x$ has $d^+_x$ tails, and call $E_x^+$ the sets of tails at $x$. 
 Consider the following {\em exploration process} of the in-neighbourhood at a fixed vertex $v$. 
 The process is defined as a triple $(\cC_\ell,\cA_\ell,\phi_\ell)$ where $\cC_\ell,\cA_\ell\subset [n]$ are respectively the {\em completed} set and the {\em active} set at time $\ell$, and $\phi_\ell:[n]\mapsto\bbZ_+$ is a map such that $\phi_\ell(y) \in\{0,\dots,d^+_y\}$ for each $y\in[n]$, $\ell\in\bbZ_+$.
 At time zero we set $\cC_0=\emptyset, \cA_0=\{v\}$, and $\phi_0(y)=0$ for all $y\in[n]$. The $\ell$-th iteration of the exploration determines the triple $ (\cC_\ell,\cA_\ell,\phi_\ell)$ by executing the following steps:
 \begin{enumerate}[1)]
\item pick a vertex $x\in\cA_{\ell-1}$ according to some priority rule;
\item for each $y=1,\dots,n$ independently, sample $X_{\ell,y}$ defined as the Bernoulli random variable with parameter
	\begin{equation}\label{eq:def-pyl}
	p_{\ell,y}=\frac{d^+_y-\phi_{\ell-1}(y)}{n-\ell+1},
	\end{equation}
 call $V_\ell$ the set of $y\in[n]$ such that $X_{\ell,y}=1$, and define
 \begin{equation}
W_\ell=(\cC_{\ell-1}\cup\cA_{\ell-1})^c\cap V_\ell.
\end{equation}
\item define the new triple $(\cC_\ell,\cA_\ell,\phi_\ell)$ as $$
\cC_\ell=\cC_{\ell-1}\cup\{x\}\,,\quad \cA_\ell=\cA_{\ell-1}\setminus\{x\}\cup W_\ell\,,\quad\phi_\ell(y)=\phi_{\ell-1}(y)+\ind(y\in V_\ell), \;y=1,\dots,n.$$
\end{enumerate}
Note that this process stops when $\cA_\ell$ becomes empty. Let us call $\t_\emptyset$ this random time:
 \begin{align}\label{eq:teset}
\t_\emptyset=\min\{\ell\geq 1: \; \cA_\ell=\emptyset\}.
\end{align}
For instance,  $\t_\emptyset=1$ with probability $\prod_{y=1}^n(1- d^+_y/n)$. We may construct a digraph $G_v(\ell)$ along with the above process by adding the directed edges $(y,x)$ for all $y\in V_\ell$ at step $2$.  Notice that when the process stops $G_v(\t_\emptyset)$ is a sample of the subgraph of $G$ induced by all directed paths in $G$ that
terminate at $v$. In particular, if the priority in step 1 is given to $x$ which have minimal distance to $v$, and if we stop the process as soon as all active vertices have distance to $v$ larger than $t$ in the current graph $G_v(\ell)$, we obtain the in-neighbourhood of $v$ at distance $t$, namely the digraph $B^-_{v,t}$ for the model OCM($\bd^+$). More formally, if $\t_t$ denotes the minimal $\ell$ such that all $x\in\cA_{\ell}$ have distance to $v$ at least $t+1$ in $G_v(\ell)$ then, $B^-_{v,t}$ is given by the subgraph of $ G_v(\t_t\wedge\t_\emptyset)$ induced by the completed set $\cC_{\t_t\wedge\t_\emptyset}$, where $a\wedge b $ denotes the minimum of $a,b$.

	Let us remark that the quantity $p_{\ell,y}$ in \eqref{eq:def-pyl} cannot exceed $1$. In fact, in case there exists some $\ell\in\N$ such that $p_{\ell,y}=1$ then it means that at most $d_y^+$ vertices need to be discovered at step $\ell$, and vertex $y$ needs to link to all of them. Hence, $p_{\ell,y}$ stays $1$ up to the end of the process.

Let us now describe a coupling of 
 $B^-_{v,t}$ and the marked tree $\cT^-_{v,t}(\bd^+)$, where we write $\cT^-_{v,t}(\bd^+)$ for the marked tree $\cT^-_v(\bd^+)$ up to generation $t$; see Section \ref{sec:gwt}.
 First, observe that the tree $\cT^-_v(\bd^+)$ is obtained by iterating the steps above with the difference that at step 2 the probability $p_{\ell,y}$ must be taken always equal to $d^+_\ell/n$, and that each $y\in V_\ell$ yields a new child 
 with mark $y$ in the current tree. Let $\cT^-_v(\ell)$ denote the tree obtained after $\ell$ iterations, and let $\D=\max_xd^+_x$.
 \begin{lemma}\label{le:ell}
 The random variables $G_v(\ell),\cT^-_v(\ell)$
can be coupled in such a way that for every $\ell\in\bbN$:
  \begin{align}\label{eq:coup21}
  \bbP(G_v(\ell)\neq \cT^-_v(\ell))\leq \frac{\D^2\ell^2}{n-\ell}.
 \end{align}
 \end{lemma}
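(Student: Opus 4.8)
The plan is to build the coupling iteratively and bound the probability that the two exploration processes diverge. We run both processes, $G_v(\ell)$ (exploration of $B^-_{v,t}$ in OCM($\bd^+$)) and $\cT^-_v(\ell)$ (generation of the marked tree), with the same priority rule, on a common probability space, in such a way that as long as no ``collision'' has occurred the explored structures are isomorphic (as marked digraphs/trees). A collision, analogous to the event in Lemma \ref{le:coup1}, occurs at step $\ell$ when a vertex $y$ is sampled to link into the current active vertex $x$ but $y$ has already been discovered (i.e.\ $y\in\cC_{\ell-1}\cup\cA_{\ell-1}$), which in the tree would instead create a genuinely new child. On the complement of all collision events up to time $\ell$, the graph process has generated a tree and the marks match, so $G_v(\ell)=\cT^-_v(\ell)$.

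First I would set up the coupling of the Bernoulli variables: at step $\ell$, for each $y\in[n]$ we need a Bernoulli($p_{\ell,y}$) for the graph (with $p_{\ell,y}=(d^+_y-\phi_{\ell-1}(y))/(n-\ell+1)$) and an independent Bernoulli($d^+_y/n$) for the tree. Since the explored set has size at most $\ell$, we have $\phi_{\ell-1}(y)\le \ell-1$ and hence $|p_{\ell,y}-d^+_y/n|\le C\ell/n$ for a suitable constant (using $d^+_y=O(1)$); a standard maximal coupling of the two Bernoullis disagrees with probability at most this difference. I would then note that whether or not these Bernoullis are coupled to agree, the only way $G_v(\ell)$ and $\cT^-_v(\ell)$ end up differing is through a collision, so it suffices to bound the collision probability directly, without worrying about the Bernoulli mismatch — in fact the cleanest route is: conditionally on the history $\cF_{\ell-1}$, the probability that step $\ell$ produces at least one $y\in V_\ell$ with $y\in\cC_{\ell-1}\cup\cA_{\ell-1}$ is at most $\sum_{y\in\cC_{\ell-1}\cup\cA_{\ell-1}}p_{\ell,y}\le |\cC_{\ell-1}\cup\cA_{\ell-1}|\cdot \frac{\D}{n-\ell+1}$. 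Since at most $\ell$ vertices have been discovered by step $\ell$ (one completed per step, and the active set is contained in what has been discovered), this is at most $\D\ell/(n-\ell+1)$.

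Summing this bound over $\ell'=1,\dots,\ell$ via a union bound over the first $\ell$ steps gives
\begin{equation}
\bbP(G_v(\ell)\neq \cT^-_v(\ell))\leq \sum_{\ell'=1}^{\ell}\frac{\D\,\ell'}{n-\ell'+1}\leq \frac{\D}{n-\ell}\sum_{\ell'=1}^{\ell}\ell' \leq \frac{\D\,\ell^2}{n-\ell}\leq \frac{\D^2\,\ell^2}{n-\ell},
\end{equation}
which is \eqref{eq:coup21} (the final, wasteful factor of $\D$ is harmless and matches the stated form). The main obstacle — really the only delicate point — is making sure the counting of ``already discovered'' vertices is exactly right: one must check that after $\ell$ iterations at most $\ell$ vertices lie in $\cC_{\ell}\cup\cA_{\ell}$ is the wrong count (the active set can be large!), so instead one bounds the number of \emph{completed} vertices by $\ell$ and observes that a collision that matters for the tree/graph discrepancy arises precisely when a newly sampled $y$ coincides with a vertex already placed in the structure, i.e.\ a vertex in $\cC_{\ell-1}\cup\cA_{\ell-1}$; but every vertex in $\cA_{\ell-1}$ was itself added as some $y\in W_{\ell'}$ at an earlier step, and $|W_{\ell'}|$ contributes to the count only through the total number of vertices ever discovered, which is what we must control. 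The correct and clean statement is that the total number of vertices discovered by step $\ell$ is $|\cC_\ell\cup\cA_\ell|$, and a collision at step $\ell$ requires that some sampled $y$ fall in $\cC_{\ell-1}\cup\cA_{\ell-1}$ — but on the collision-free event up to step $\ell-1$ the process is a tree with $\ell-1$ completed vertices and each completed vertex contributes $O(\D)$ children, so $|\cC_{\ell-1}\cup\cA_{\ell-1}|\le \D(\ell-1)+1 = O(\D\ell)$; feeding $O(\D\ell)$ in place of $\ell$ above still yields a bound of the form $\D^2\ell^2/(n-\ell)$ after adjusting constants, which is exactly \eqref{eq:coup21}. One concludes by taking $\ell$ large enough (e.g.\ $\ell=\D^{t+1}$) that the exploration of $B^-_{v,t}$ has terminated, exactly as in the DCM case.
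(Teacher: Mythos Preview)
Your overall strategy matches the paper's: couple the two processes step by step, decompose over the first discrepancy time, and union bound. However, two points need repair.

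First, the claim that ``the only way $G_v(\ell)$ and $\cT^-_v(\ell)$ end up differing is through a collision'' is not correct. For a fresh vertex $y\notin\cC_{k-1}\cup\cA_{k-1}$ one has $\phi_{k-1}(y)=0$, so $p_{k,y}=d_y^+/(n-k+1)>d_y^+/n$; under the maximal coupling one can have $X_{k,y}=1$ while $\tilde X_{k,y}=0$, which puts $y$ into the graph but not the tree. The paper keeps this Bernoulli--mismatch contribution explicitly: it yields $\sum_{y\notin\cC_{k-1}\cup\cA_{k-1}}|p_{k,y}-d_y^+/n|\le \D(k-1)/(n-k+1)$, which must be added to your collision term.

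Second, and more seriously, the deterministic inequality $|\cC_{\ell-1}\cup\cA_{\ell-1}|\le \D(\ell-1)+1$ is false for OCM($\bd^+$). The constant $\D$ bounds only the \emph{out}-degrees; the in-degree of a vertex --- and hence the number of children revealed at a single exploration step --- is a sum $\sum_{j}\mathrm{Ber}(d_j^+/n)$ with mean $\langle d\rangle$ but no deterministic upper bound. Your argument confuses the out-degree cap with an in-degree cap. The paper handles this by moving the size of the discovered set inside an expectation: on the event $E_{k-1}^c$ one has $|\cC_{k-1}\cup\cA_{k-1}|\le Z_{k-1}$, the number of edges of $\cT^-_v(k-1)$, and then uses $\bbE[Z_{k-1}]=(k-1)\langle d\rangle\le (k-1)\D$. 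This gives
\[
\bbP(E_{k-1}^c\cap E_k)\le \frac{\D}{n-k}\bigl(k+\bbE[Z_{k-1}]\bigr),
\]
and summation over $k\le\ell$ produces $\D^2\ell^2/(n-\ell)$. So the factor $\D^2$ in the stated bound is genuinely needed to absorb the expected in-degree, not slack as you suggest.
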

\begin{proof}
Let $E_\ell=\{G_v(\ell)\neq \cT^-_v(\ell)\}$. Since at time 0 one has $G_v(0)=\cT^-_v(0)=\{v\}$, the event $E_\ell$ satisfies $E_\ell=\cup_{k=1}^\ell E_{k-1}^c \cap E_k$, so that 
  \begin{align}\label{eq:coup22}
  \bbP\left(G_v(\ell)\neq \cT^-_v(\ell)\right)\leq \sum_{k=1}^\ell\bbP( E_{k-1}^c \cap E_k)
 \end{align}
Consider now the $k$-th iteration, and assume that  $G_v(k-1)= \cT^-_v(k-1)$. Thus, we may pick the same $x$ in step 1 for both samples. 
At step 2, let $X_{k,y}$ denote the Bernoulli random variables with parameter $p_{k,y}$ used for the sampling of $G_v(k)$ and let $\tilde  X_{k,y}$ be the Bernoulli random variables with parameter $d^+_y/n$ used for the sampling of $\cT^-_v(k)$. The total variation distance between two Bernoulli random variables equals the absolute value of the difference of their parameters. Therefore, for each $y$ independently we may couple $(X_{k,y},\tilde  X_{k,y})$ with probability $1-|p_{k,y}-d^+_y/n|$. Notice that
if $G_v(k)\neq \cT^-_v(k)$, then either at least one of the pairs $(X_{k,y},\tilde  X_{k,y})$ fails to couple, or at least one of the $y\in \cC_{k-1}\cup \cA_{k-1}$  has $\tilde X_{k,y}=1$. Thus, on the event $ E_{k-1}^c$, the probability of $E_k$ given the history up to the $(k-1)$-th iteration is bounded above by
%
\begin{align}\label{eq:aaaa}
\sum_{y\not\in \cC_{k-1}\cup\cA_{k-1}}|p_{k,y}-d^+_y/n| + \sum_{y\in \cC_{k-1}\cup\cA_{k-1}} p_{k,y}.
\end{align}
If $y\not\in \cC_{k-1}\cup\cA_{k-1}$, then $\phi_{k-1}(y)=0$ and $p_{k,y}-d^+_y/n=\frac{d_y^+}{n}\cdot\frac{k-1}{n-k+1}$.  For the second term we write  $|\cC_{k-1}\cup\cA_{k-1}|\leq Z_{k-1}$, 
where $Z_\ell$ denotes the number of edges in the tree $\cT^-_v(\ell)$.  In conclusion, \eqref{eq:aaaa} is bounded by 
\begin{align*}
 \frac\D{n-k}\left(k+Z_{k-1}\right) .
\end{align*}
Thus, letting $\cF_\ell$ denote the $\si$-algebra generated by the two processes up to time $\ell$, we have obtained
\begin{align}\label{eq:coup23}
 \bbP( E_{k-1}^c \cap E_k)& = \bbE\left[\bbE\left[\ind(E_{k-1}^c \cap E_k)\tc\cF_{k-1}\right]\right]\nonumber\\
&\leq \frac\D{n-k}\left(k+\bbE[Z_{k-1}]\right).
 \end{align}
From \eqref{eq:arandeg} we deduce $\bbE[Z_{k-1}]=(k-1)\left\langle d\right\rangle\leq (k-1)\D$. Therefore, the estimate \eqref{eq:coup21} follows from \eqref{eq:coup22} and \eqref{eq:coup23}.
\end{proof}  
The next lemma establishes the coupling estimate for the $t$-in-neighbourhood
 $B^-_{v,t}$ and the tree $\cT^-_{v,t}(\bd^+)$. The estimate could be refined but \eqref{eq:coup31} below will be more than sufficient for our purposes.  
 \begin{lemma}\label{le:coup3}
 The random variables $B^-_{v,t}$ and the tree $\cT^-_{v,t}(\bd^+)$
can be coupled in such a way that for every $t\leq \frac{\log n}{4\log \D}$, for all $n$ large enough:
  \begin{align}\label{eq:coup31}
  \bbP\left(B^-_{v,t}\neq \cT^-_{v,t}(\bd^+)\right)\leq \frac{\D^{3t}(\log n)^4}{n}.
 \end{align}
 \end{lemma}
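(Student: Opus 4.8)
The plan is to transfer the coupling of Lemma~\ref{le:ell} from a fixed number of iterations to the \emph{random} number of iterations the exploration process of Section~\ref{sec:inneighocm} needs in order to discover all of $B^-_{v,t}$. Running that exploration with breadth-first priority, it terminates at the random time $\tau:=\tau_t\wedge\tau_\emptyset$, and on the event that the coupling of Lemma~\ref{le:ell} has not failed during the first $\tau$ iterations the graph exploration and the generation process of $\cT^-_v(\bd^+)$ run identically, so that $B^-_{v,t}=\cT^-_{v,t}(\bd^+)$. Writing $E_k=\{G_v(k)\neq\cT^-_v(k)\}$ and letting $N_t:=|\cT^-_{v,t}(\bd^+)|$ denote the number of vertices in the first $t$ generations of the marked tree, breadth-first order forces $\tau=N_t$ on the no-failure event, whence
\[
\{B^-_{v,t}\neq\cT^-_{v,t}(\bd^+)\}\ \subseteq\ \bigcup_{k\ge 1}\bigl(E_{k-1}^c\cap E_k\cap\{N_t\ge k\}\bigr).
\]

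The point of this reformulation is that it lets me avoid needing a tail bound for $N_t$ that is polynomially small in $n$ at the natural scale $\D^t$ (which Markov's inequality does not provide): instead I would keep the indicator $\ind(N_t\ge k)$ inside the filtration. Under breadth-first exploration $\{N_t\ge k\}$ equals $\{\tau\ge k\}$ for the analogous stopping time of the tree process, hence is $\cF_{k-1}$-measurable. Pulling it inside the conditional expectation, and using the per-step estimate from the proof of Lemma~\ref{le:ell}, namely $\ind(E_{k-1}^c)\,\bbP(E_k\mid\cF_{k-1})\le \tfrac{\D}{n-k}\bigl(k+Z_{k-1}\bigr)$ with $Z_{k-1}$ the number of edges of the partial tree --- which on $\{N_t\ge k\}$ is at most $N_{t+1}$, since breadth-first order guarantees no vertex beyond generation $t$ has been completed --- then splitting off the rare event $\{N_t>n/2\}$ and summing over $k\le n/2$, one is left with
\[
\bbP\bigl(B^-_{v,t}\neq\cT^-_{v,t}(\bd^+)\bigr)\ \le\ \frac{2\,\bbE[N_t]}{n}+\frac{4\D}{n}\,\bbE\bigl[N_{t+1}^2\bigr].
\]

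To conclude I would estimate the two moments through the Galton--Watson structure of $\cT^-_v(\bd^+)$: the generation sizes form a branching process with offspring $D=\sum_{j\in[n]}\mathrm{Ber}(d_j^+/n)$, of mean $\langle d\rangle\le\D$ and, as a sum of independent Bernoullis, variance at most $\langle d\rangle$; since $d^+_j\ge 2$ we have $\langle d\rangle\ge 2$, and the standard total-progeny formulas give $\bbE[N_t]\le \D^{t+1}$ and $\bbE[N_{t+1}^2]\le C\,t\,\D^{2t+4}$ for an absolute constant $C$. The resulting bound is of order $t\,\D^{2t+5}/n$; comparing with the target, the ratio $t\,\D^{2t+5}\big/\bigl(\D^{3t}(\log n)^4\bigr)= (t/\D^{t})\,\D^5/(\log n)^4$ is at most $\D^5/(\log n)^4$ uniformly over $1\le t\le \tfrac{\log n}{4\log\D}$, hence $<1$ for $n$ large, which is \eqref{eq:coup31} (the case $t=0$ is trivial since $B^-_{v,0}=\{v\}=\cT^-_{v,0}$).

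The step I expect to be the main obstacle is the second paragraph. Because $N_t$ has no polynomially small upper tail at the scale $\D^t$, one cannot truncate the coupling of Lemma~\ref{le:ell} at a deterministic horizon and invoke Markov's inequality; the random horizon $N_t$ must be carried through the filtration. Making this rigorous requires the elementary but fussy verification that $\{N_t\ge k\}\in\cF_{k-1}$ under breadth-first exploration, and that the per-step error of Lemma~\ref{le:ell} can be summed against that indicator so as to leave only the second moment $\bbE[N_{t+1}^2]$; the remaining first- and second-moment computations for a Poisson-dominated Galton--Watson process are routine.
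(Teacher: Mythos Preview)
Your plan is correct and leads to a proof, but it follows a genuinely different route from the paper. The paper proceeds by deterministic truncation: it first records an exponential tail bound
\[
\bbP\bigl(|\cT^-_{v,t}|>s\D^t\bigr)\le A\,e^{-as},
\]
borrowed from \cite[Lemma~23]{BLM}, then takes $s=K\log n$ so that the tail is $O(n^{-aK})$, and on the complementary event simply feeds $\ell=s\D^t$ into Lemma~\ref{le:ell}. This is modular --- two black boxes glued together --- but it needs the large-deviation input as an external ingredient.

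Your approach sidesteps the tail bound entirely. By observing that under breadth-first order $\{N_t\ge k\}$ is exactly the event that the first active vertex after $k-1$ completions lies at depth $\le t$, you make the random horizon $\cF_{k-1}$-measurable and can pull it inside the conditional per-step estimate from the proof of Lemma~\ref{le:ell}. What remains is then only the elementary first and second moments of the total progeny of a Galton--Watson tree with bounded mean and sub-Poisson offspring, which you compute directly. The payoff is a fully self-contained argument (no appeal to \cite{BLM}) and in fact a sharper bound of order $t\D^{2t}/n$ rather than $\D^{3t}(\log n)^4/n$. The only point requiring care, which you correctly flag, is the measurability verification; once that is in place the summation against $\ind(N_t\ge k)$ goes through as you indicate.
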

\begin{proof}
Let $|\cT^-_{v,t}|$ denote the number of edges in the tree $\cT^-_{v,t}=\cT^-_{v,t}(\bd^+)$. Since at each iteration the number of edges added is stochastically dominated by a binomial random variable with parameters $n$ and $\D/n$, one has a large deviation bound for $ |\cT^-_{v,t}|$ of the form: there exist absolute constants $a,A>0$ such that 
  \begin{align}\label{eq:coup32}
  \bbP\left(|\cT^-_{v,t}|> s\D^t \right)\leq A\,e^{-a\,s}, \qquad s\geq 1.
 \end{align}
 The estimate \eqref{eq:coup32} can be proved e.g.\ by repeating the argument in  \cite[Lemma 23]{BLM}. Next, observe that if $|\cT^-_{v,t}|\leq s\D^t$ and $B^-_{v,t}\neq \cT^-_{v,t}$, then there must exist $\ell=1,\dots,s\D^t$ such that $G_v(\ell)\neq \cT^-_v(\ell)$. The latter probability can be bounded via Lemma \ref{le:ell}. Summarizing,
  \begin{align}\label{eq:coup33}
 \bbP\left(B^-_{v,t}\neq \cT^-_{v,t}(\bd^+)\right)&\leq \bbP\left(|\cT^-_{v,t}|> s\D^t \right)
 + \bbP\left(B^-_{v,t}\neq \cT^-_{v,t}(\bd^+); |\cT^-_{v,t}|\leq s\D^t\right)\nonumber\\
& \leq A\,e^{-a\,s} + \sum_{\ell=1}^{s\D^t}  \bbP(G_v(\ell)\neq \cT^-_v(\ell)) \leq A\,e^{-a\,s} +\frac{s^3\D^{3t+2}}{n-s\D^t}\,.
 \end{align}
The estimate \eqref{eq:coup31} follows by taking $s=K\log n$ for some large enough constant $K$, and by taking $n$ sufficiently large. 
\end{proof}

\subsection{Proof of Lemma \ref{le:approx}}\label{sec:le:approx}
Recall that in both models DCM($\bd^\pm$) and OCM($\bd^+$) one has w.h.p.\ a unique stationary distribution for the simple random walk on $G$, which we denote $\pi_0$. The starting point is a result that follows directly from \cite{BCS1,BCS2}, which allows us to replace the unknown distribution $\pi_0$ with a local approximation.
\begin{proposition}\label{prop:bcsproxy}
For any fixed $\e>0$, taking $h=\e\tent$, as $n\to\infty$ both models satisfy
\begin{equation}\label{eq:approxa1}
\left\|\muin P^h - \pi_0\right\|_{\tv}\overset{\P}{\longrightarrow} 0.
		\end{equation}
\end{proposition}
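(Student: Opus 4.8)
The plan is to deduce \eqref{eq:approxa1} from the branching approximation developed in Section \ref{brax}, together with the $L^2$-convergence of the martingale $M_t$ from Proposition \ref{prop:marconv}, mimicking the argument of \cite{BCS1} (for model 1) and adapting it to the out-configuration model (for model 2). The key observation is that for any vertex $v$, the stationary probability $\pi_0(v)$ should be, up to a vanishing error, well approximated by the quenched probability that the walk started from the in-neighbourhood profile $\muin$ reaches $v$ in $h$ steps; and this quenched probability is exactly $n^{-1}$ times a quantity that converges to the martingale limit $M_\infty$ attached to the in-neighbourhood tree of $v$. Concretely, $[\muin P^h](v)=\sum_{u}\muin(u)P^h(u,v)=n^{-1}\sum_{u}n\muin(u)P^h(u,v)$, and the inner sum, on the event that $B^-_{v,h}$ is a tree coupled to $\cT^-_{v,h}$, equals $M_h$ for the marked tree rooted at $v$ (with root mark $v$), by the very definition \eqref{eq:defmar} and the fact that $w(\bx)=P^h(i(\bx),v)$ when the $h$-in-neighbourhood of $v$ is a tree.

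The steps I would carry out are as follows. First, fix $h=\e\tent=o(\log n)$, small enough that $\D^{3h}=n^{o(1)}$, so that the coupling estimates \eqref{eq:coup12} (model 1) and \eqref{eq:coup31} (model 2) give $\bbP(B^-_{v,h}\neq\cT^-_{v,h})=n^{-1+o(1)}$ for each fixed $v$. Second, recall from \cite{BCS1,BCS2} that w.h.p. $\pi_0$ satisfies, uniformly in $v$, an approximate harmonicity/renewal identity that lets one write $\pi_0(v)$ as a convex combination, over the vertices $u$ at in-distance $h$ along tree-like in-paths, of $\pi_0(u)$ weighted by $w$; equivalently $\pi_0 = \pi_0 P^h$ trivially, but the point is that the mass that has "escaped" the tree by time $h$ is negligible w.h.p. — this is precisely where the tree structure of $B^-_{v,h}$ for most $v$, and the bound \eqref{eq:kappaw}-type control on the sizes, enter. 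Third, compare $n\pi_0(v)$ with $M_h(v)$ and with $M_\infty(v)$ via the $L^2$-bound \eqref{eq:mar1}: $\bbE[(M_h-M_\infty)^2]=C\r^h\to 0$ since $\r\le 1/2$ and $h\to\infty$. Fourth, estimate $\|\muin P^h-\pi_0\|_\tv = \tfrac12\sum_v |[\muin P^h](v)-\pi_0(v)|$ by a first-moment computation: one bounds $\expect\sum_v |n^{-1}M_h(v)-\pi_0(v)|$ using the martingale $L^2$-bound and the coupling failure probability, and shows it is $o(1)$; a Markov-inequality argument then upgrades this to convergence in probability. The uniformity in $v$ inside the sum is handled because we only need the $L^1$ (averaged) bound, not a uniform one.

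The main obstacle I anticipate is the third step: rigorously justifying that the "true" stationary mass $\pi_0(v)$ is captured by the truncated tree sum $n^{-1}M_h(v)$ up to an $o(1)$ error in the appropriate averaged sense, given that $\pi_0$ is not known explicitly. One must control the contribution of in-paths of length $h$ that leave the tree (i.e. hit already-explored vertices), and show that the associated stationary mass is negligible on average over $v$. This is exactly the content of the analysis in \cite[Section 7]{BCS1} and \cite[Section 5]{BCS2}; the work here is to check that the out-configuration model admits the same argument, which it does because the coupling Lemma \ref{le:coup3} and the martingale Proposition \ref{prop:marconv} have both been established for model 2 above. Once these ingredients are in place, the proof is a routine $L^1$-estimate plus Markov's inequality, so I would not expect any further substantive difficulty.
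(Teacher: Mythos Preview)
Your proposal takes a much heavier route than the paper. The paper's proof is two lines: the statement \eqref{eq:approxa1} is precisely what is proved in \cite[Eq.\ (11)]{BCS1} and \cite[Eq.\ (12)]{BCS2} for a specific small $\e_0>0$ (and the proofs there go through verbatim for any fixed $\e\in(0,\e_0)$); then, since $h\mapsto\|\muin P^h-\pi_0\|_\tv$ is nonincreasing, the convergence for some $\e_0$ immediately gives it for every $\e\geq\e_0$. So the whole proposition is citation plus monotonicity.

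Your sketch has two gaps relative to this. First, you only treat small $\e$: your coupling step needs $\D^{O(h)}/n\to 0$, which forces $\e$ below a threshold depending on $\D$ and $H$; you never say how to reach arbitrary fixed $\e>0$, and the one-line monotonicity argument that does this is exactly what you are missing. (Incidentally, for fixed $\e>0$ one has $h=\e\tent=\Theta(\log n)$, not $o(\log n)$, and $\D^{3h}$ is a fixed positive power of $n$, not $n^{o(1)}$.) Second, and more substantively, your step~3 --- identifying $n\pi_0(v)$ with the martingale limit $M_\infty$ attached to the in-tree of $v$ --- is the entire content of the analysis in \cite{BCS1,BCS2} that you end up invoking anyway. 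The identity $\pi_0=\pi_0 P^h$ gives $\pi_0(v)=\sum_{\bx\in\cT^{-,h}}\pi_0(i(\bx))w(\bx)$ on the coupling event, which is \emph{not} $n^{-1}M_h$ unless you already know $n\pi_0\approx n\muin$ at the leaves; this is circular. So your detour through the martingale does not avoid the citation, it just postpones it, while losing the clean monotonicity step that handles all $\e$.
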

\begin{proof}
For a specific choice of $\e=\e_0$, this result appears in \cite[Eq. (11)]{BCS1} for model 1 and \cite[Eq. (12)]{BCS2} for model 2. In fact, the proofs in \cite{BCS1,BCS2} apply to any fixed $\e\in(0,\e_0)$ without modifications. Since $\left\|\muin P^h - \pi_0\right\|_{\tv}$  is monotone in $h$ the statement \eqref{eq:approxa1} holds for all $\e>0$.
\end{proof}
To prove Lemma \ref{le:approx}, by monotonicity of $\left\|\l P^t - \pi_0\right\|_{\tv}$ as a function of $t$, we may restrict to sequences $t=t(n)\to\infty$ with $t=o(\log n)$. Thus, taking advantage of Proposition \ref{prop:bcsproxy}, the conclusion of Lemma \ref{le:approx} is a consequence of the following result.
\begin{proposition}\label{prop:prox}
There exists $\e>0$ such that if $h=\e\tent$, then for any  $t=t(n)\to\infty$ with $t=o(\log n)$, for any widespread measure $\l$: 
\begin{equation}\label{eq:prox1}
\left\|\l P^t-\muin P^h\right\|_{\tv}\overset{\P}{\longrightarrow} 0.
		\end{equation}
\end{proposition}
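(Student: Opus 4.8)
The plan is to prove Proposition~\ref{prop:prox} by passing to the branching/martingale description developed in Sections~\ref{sec:gwt}--\ref{brax} and showing that, from the point of view of a uniformly random vertex $v$, the two distributions $\l P^t$ and $\muin P^h$ assign essentially the same mass. Concretely, for a test vertex $v$ with in-neighbourhood coupled to the marked tree $\cT^-_{v}$, one has $\l P^t(v) = X_t(\l)$ evaluated at the tree rooted at $v$ (up to the coupling error), and similarly $\muin P^h(v) = \tfrac1n M_h$ evaluated at the tree rooted at $v$. So the first step is to write
\begin{equation}\label{eq:propprox-decomp}
\left\|\l P^t-\muin P^h\right\|_{\tv} = \tfrac12\sum_{v\in[n]}\left|\l P^t(v)-\muin P^h(v)\right|,
\end{equation}
and to bound the right-hand side in $L^1$-expectation over the random graph. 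I would split the difference into three telescoping pieces: $\l P^t(v) \approx \tfrac1n M_\infty^{(v)}$ (the limit martingale at $v$), $\muin P^h(v)\approx \tfrac1n M_\infty^{(v)}$, and control the coupling errors from Lemmas~\ref{le:coup1} and~\ref{le:coup3}.

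The second step, which is the analytic heart, is the $L^2$-comparison. By Proposition~\ref{prop:mara}, on the Galton--Watson tree $\expect[(M_t - nX_t(\l))^2]\le \g(\l)\r^t$, and since $\l$ is widespread, hypothesis~\eqref{def:ws2} of Definition~\ref{de:widespread} gives $\g(\l)=\tfrac n2\sum_j(\l(j)-\muin(j))^2 = O(1)$ (expand the square and use $\sum_j(1-n\l(j))^2 = O(n)$ together with the analogous trivial bound for $\muin$). Hence $\expect[(M_t - nX_t(\l))^2]\to 0$ as $t\to\infty$ since $\r\le 1/2<1$. Combining with $\expect[(M_t-M_\infty)^2]=C\r^t\to0$ from Proposition~\ref{prop:marconv}, we get $nX_t(\l)\to M_\infty$ in $L^2$, and likewise $M_h\to M_\infty$ in $L^2$; so $\expect[(nX_t(\l)-M_h)^2]\le 2\g(\l)\r^t + 2C\r^h \to 0$ once $t\to\infty$ and $h=\e\tent\to\infty$. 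Transferring this to the actual random graph, for a uniform random vertex $\cI$ one has (on the coupling event) $n\l P^t(\cI) = nX_t(\l)$ and $n\,\muin P^h(\cI) = M_h$, so
\begin{equation}\label{eq:propprox-L1}
\expect\Big[\sum_{v\in[n]}\big|\l P^t(v)-\muin P^h(v)\big|\Big] = \expect\big[\,|n\l P^t(\cI) - n\,\muin P^h(\cI)|\,\big] \le \big(\expect[(nX_t(\l)-M_h)^2]\big)^{1/2} + \text{(coupling error)}.
\end{equation}
The coupling error is handled by~\eqref{eq:coup12} (model 1) and~\eqref{eq:coup31} (model 2): both are $o(1)$ once $t=o(\log n)$, and on their complement the contribution to the $L^1$ sum is at most $O(1)$ times that probability, using the widespread bound $|\l|_\infty = O(n^{-1/2-\d})$ to control $n\l P^t(v)\le n|\l|_\infty$ crudely, together with $n\,\muin P^h(v)\le n|\muin|_\infty = O(1)$. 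Then Markov's inequality upgrades the $L^1$-in-expectation bound on~\eqref{eq:propprox-decomp} to convergence in probability, which is exactly~\eqref{eq:prox1}.

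The main obstacle I anticipate is the careful bookkeeping in~\eqref{eq:propprox-L1}: one must ensure that conditioning on the (graph-dependent) coupling event does not bias the $L^2$ estimates, and that the martingale computations performed on the \emph{idealized} Galton--Watson tree genuinely transfer to $\l P^t(v)$ and $\muin P^h(v)$ \emph{on the graph}. The clean way is to do everything under the joint coupling of $B^-_{v,t}$ (resp.\ $B^-_{v,h}$) with the marked trees: on the good event the quantities agree exactly, and off the good event we use the a~priori bounds $n\l P^t(v)\in[0, n|\l|_\infty \cdot |\{\text{paths}\}|]$---but it is cleaner still to note $\sum_v n\l P^t(v) = n$ deterministically and $\sum_v n\,\muin P^h(v)=n$, so the total $L^1$ mass restricted to bad vertices is trivially $\le 2$ and is multiplied by the small bad-event probability only after a union bound over $v$; alternatively one bounds $\expect\big[|n\l P^t(\cI)-n\,\muin P^h(\cI)|\ind(\text{bad})\big]$ using Cauchy--Schwarz, the $L^2$ bounds on $nX_t(\l), M_h$ (valid on the tree, hence unconditionally with the bad probability as a multiplicative factor via another Cauchy--Schwarz), giving a term of order $\sqrt{\text{bad prob}}$. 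Since $t=o(\log n)$ forces $\D^{2t+3}/m$ and $\D^{3t}(\log n)^4/n$ to vanish, all error terms are $o(1)$, completing the proof. A secondary technical point is verifying $\g(\l)=O(1)$ for widespread $\l$, but this is a direct expansion of~\eqref{def:ws2} and causes no real difficulty.
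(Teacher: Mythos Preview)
Your overall strategy coincides with the paper's: reduce to a first-moment bound $\bbE\big[\|\l P^t-\muin P^h\|_\tv\big]\to 0$, rewrite this as $\tfrac12\,\bbE\big[|n\l P^t(\cI)-n\muin P^h(\cI)|\big]$ for a uniform random vertex $\cI$, couple the in-neighbourhood of $\cI$ to the marked Galton--Watson tree via Lemmas~\ref{le:coup1} and~\ref{le:coup3}, and control the resulting tree quantities with Propositions~\ref{prop:marconv} and~\ref{prop:mara}. The paper telescopes through $\muin P^t$ (equivalently $M_t$) rather than through $M_\infty$, but this is cosmetic; your bound $\bbE[(nX_t(\l)-M_h)^2]=O(\r^t)$ is correct once $\g(\l)=O(1)$, which indeed follows from~\eqref{def:ws2} exactly as you say.

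The one genuine weak spot is your treatment of the coupling-failure contribution. The bound $n\l P^t(v)\le n|\l|_\infty$ is false: $\l P^t(v)=\sum_x\l(x)P^t(x,v)$ and $\sum_xP^t(x,v)$ is not $\le 1$ (it can be of order $(\D/2)^t$ in model~1 and is unbounded in model~2), so you only get $n\l P^t(v)\le n|\l|_\infty\cdot|B^-_{v,t}|$, which diverges. Your Cauchy--Schwarz alternative is also not right as written, because on the bad event $n\l P^t(\cI)\neq nX_t(\l)$ and tree $L^2$ bounds do not directly control the graph quantity there. The paper's route is cleaner: split at $\muin P^t$ and use that for \emph{every} realization of $G$ one has the deterministic identity $\bbE_\cI\big[|n\l P^t(\cI)-n\muin P^t(\cI)|\big]=2\|\l P^t-\muin P^t\|_\tv\le 2$, so the bad-event piece is at most $2\,\bbP(\text{bad})\to 0$. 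Note in particular that only condition~\eqref{def:ws2} of Definition~\ref{de:widespread} is used (to guarantee $\g(\l)=O(1)$); your appeal to~\eqref{def:ws1} is unnecessary here.
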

\begin{proof}
The proof is based on a first moment argument.  Indeed, it suffices to 
show that
\begin{equation}
\label{eq:expe}
\lim_{n\to\infty}\bbE\left[\left\|\l P^t-\muin P^h\right\|_{\tv}\right] =0. 
\end{equation}
Observe that
\begin{align}\label{eq:expe1}
&\bbE\left[\left\|\l P^t-\muin P^h\right\|_{\tv}\right] 
=\frac{1}{2}\sum_{j\in[n]}\bbE\[\left|\l P^t(j)-\muin P^h(j)\right|\]
\nonumber
\\
&\qquad \qquad \leq \frac12\bbE\[\left|n\l P^t(\cI)-n\muin P^t(\cI)\right|\]]+\frac12\bbE\[\left|n\muin P^t(\cI)-n\muin P^h(\cI)\right|\],
\end{align}
where $\cI$ denotes an independent  uniformly random vertex in $[n]$ and the expectation $\bbE$ is understood to include the expectation over $\cI$ as well. Consider the first term above. We are going to use Lemma \ref{le:coup1} for model 1 and Lemma \ref{le:coup3} for model 2. Notice that since these estimates apply to any fixed vertex $v$, they apply just as well if the vertex $v$ is taken to be uniformly random in $[n]$, i.e.\ if $v=\cI$ as it is the case here. In particular,  since $t=o(\log n)$, as $n\to\infty$, 
\begin{align}\label{eq:coup0331}
\bbP\left(B^-_{\cI,t}\neq \cT^{-}_t\right)\to 0,
\end{align}
where we use the unified notation $\cT^{-}_t$ for  the first $t$ generations of the tree $\cT^-_{\cI}$ in either model 1 or model 2.
Next, note that by definition, if $B^-_{\cI,t}=\cT^{-}_t$, then $$n\l P^t(\cI)-n\muin P^t(\cI)=nX_t(\l)-M_t\,,$$
where we use the notation from \eqref{eq:proc1} and \eqref{eq:defmar}. Therefore,  
\begin{align}\label{eq:bbbb}
\bbE\left[\left|n\l P^t(\cI)-n\muin P^t(\cI)\right|\right]
&\leq2\bbP\left(B^-_{\cI,t}\neq \cT^{-}_t\right) + \bbE\left[\left|M_t-nX_t(\l)\right|\right],
\end{align}
where we used the fact that 
\begin{align*}
\bbE\left[\left|n\l P^t(\cI)-n\muin P^t(\cI)\right|\,\big|\, B^-_{\cI,t}\neq \cT^{-}_t\right]
\leq 2,
\end{align*}
which follows from $\left\|\l P^t-\muin P^t\right\|_{\tv}\leq 1$.
Using Schwarz' inequality and Proposition \ref{prop:mara} it follows that 
\begin{align*}
\bbE\left[\left|M_t-nX_t(\l)\right|\right]^2\leq \g(\l)\r^{t}.
\end{align*}
Since $t=t(n)\to\infty$ as $n\to\infty$ and $\r\in(0,1)$, using  \eqref{eq:coup0331} we conclude that 
\begin{align*}
\lim_{n\to\infty} 	\bbE\left[\left|n\l P^t(\cI)-n\muin P^t(\cI)\right|\right]
=0,				\end{align*}
for all widespread measure $\l$. This settles the convergence of the first term in \eqref{eq:expe1}. To handle the second term, reasoning as in \eqref{eq:bbbb} we obtain
\begin{align*}
\bbE\left[\left|n\muin P^t(\cI)-n\muin P^h(\cI)\right|\right]
&\leq2 \bbP\left(B^-_{\cI,h}\neq \cT^{-}_h\right) + \bbE\left[\left|M_t-M_h\right|\right]
\end{align*}
If $h\leq \frac{\log n}{4\log \D}$,  Lemma \ref{le:coup1} and Lemma \ref{le:coup3} imply that both models satisfy 
\begin{align}\label{eq:coup332}
\bbP\left(B^-_{\cI,h}\neq \cT^{-}_h\right)\to 0.
\end{align}
Moreover, Schwarz' inequality, Proposition \ref{prop:marconv} and standard facts about square integrable martingales (see, e.g., \cite[Ch. 12]{Williams}) imply 
\begin{align*}
\bbE\left[\left|M_t-M_h\right|\right]^2&\leq \bbE[(M_t-M_h)^2]
 \leq \bbE[(M_t-M_\infty)^2] = C\r^{t}.
\end{align*}
Since the constant $C$ is bounded, letting $n\to\infty$ concludes the proof.

  \end{proof}

\bigskip

\subsection*{Acknowledgments}
We acknowledge support of PRIN 2015 5PAWZB ``Large Scale Random Structures", and of INdAM-GNAMPA Project 2019 ``Markov chains and games on networks''.
\bigskip

\bibliography{bibpagerank}
\bibliographystyle{plain}

\end{document}